\newcommand{\abs}[1]{\left|#1\right|}
\newcommand{\field}[1]{\mathbb{#1}}
\newcommand{\Z}{\field{Z}}
\newcommand{\F}{\field{F}}
\newcommand{\cA}{{\cal A}}
\newcommand{\cB}{{\cal B}}
\newcommand{\cC}{{\cal C}}
\newcommand{\cS}{{\cal S}}
\newcommand{\cT}{{\cal T}}
\newcommand{\cR}{{\cal R}}
\newcommand{\cN}{{\cal N}}
\newcommand{\cG}{{\cal G}}
\newcommand{\mmod}{{\textup{mod}}}
\newtheorem{definition}{Definition}
\newtheorem{construction}{Construction}
\newtheorem{theorem}{Theorem}
\newtheorem{lemma}{Lemma}
\newtheorem{remark}{Remark}
\newtheorem{corollary}{Corollary}
\newtheorem{conjecture}{Conjecture}
\newtheorem{example}{Example}
\begin{document}

\bibliographystyle{plain}

\title{
\begin{center}
Steiner Systems over Mixed Alphabet \\
and Related Designs
\end{center}
}
\author{
{\sc Tuvi Etzion}\thanks{Department of Computer Science, Technion,
Haifa 3200003, Israel, e-mail: {\tt etzion@cs.technion.ac.il}.}}

\maketitle

\begin{abstract}
A mixed Steiner system MS$(t,k,Q)$ is a set (code) $\cC$ of words of weight $k$ over an alphabet~$Q$, where not all coordinates of a word have the
same alphabet size, each word of weight~$t$, over~$Q$, has distance $k-t$ from exactly one codeword of $\cC$,
and the minimum distance of the code $2(k-t)+1$.
Mixed Steiner systems are constructed from perfect mixed codes, resolvable designs, large set,
orthogonal arrays, and a new type of pairs-triples design. Necessary conditions for the existence of mixed Steiner systems are presented and
it is proved that there are no large sets of these Steiner systems.
\end{abstract}

\vspace{0.5cm}

\noindent {\bf Keywords:} Large sets, orthogonal arrays, pairs-triples designs, perfect mixed codes, resolvable designs, Steiner systems..


\newpage
\section{Introduction}
\label{sec:introduction}

A \emph{Steiner system of order $n$}, S$(t,k,n)$, is a pair $(\cN,B)$, where $\cN$ is an $n$-set
(whose elements are called \emph{points}) and $B$ is a collection
of $k$-subsets (called \emph{blocks}) of~$\cN$, such that each $t$-subset of~$\cN$ is contained
in exactly one block of $B$. A Steiner system can be represented by a binary code~$\cC$ whose codewords have length $n$ and
weight $k$. For each word $x$ of length $n$ and weight $t$, there is exactly one codeword $c \in \cC$ for
which $d(x,c)=k-t$, where $d(y,z)$ is the Hamming distance between the words $y$ and $z$.
As a code an S$(t,k,n)$ has minimum distance $2(k-t)+2$. By abuse of notation we are going to use a mixed notation of sets and words
and it will not be mentioned which one is used.
Steiner systems were extensively studied in the literature for existence and applications.

There are several generalization of Steiner systems, such as group divisible designs~\cite{Han75}, orthogonal arrays~\cite{HSS99},
generalized Steiner systems~\cite{Etz97}, etc.~. The current work considers another generalization of Steiner systems, namely mixed Steiner systems,
which are defined first in~\cite{Etz22}.

A \emph{mixed Steiner system} MS$(t,k,Q)$, over the mixed alphabet $Q=\Z_{q_1} \times \Z_{q_2} \times \cdots \times \Z_{q_n}$,
is a pair $(Q,\cC)$, where $\cC$ is a set of codewords (blocks) of weight $k$, over $Q$, for each word $x$ of weight $t$ over $Q$, there
exists exactly one codeword $c \in \cC$, such that $c$ covers $x$, i.e., $d(x,c)=k-t$, and
the minimum Hamming distance of the code $\cC$ is $2(k-t)+1$. In most cases, we assume that
$q_i \leq q_j$ for $1 \leq i < j \leq n$ and at least two of the $q_i$'s are different, but in
some cases there is no given such order. If all the $q_i$'s are equal, then the design is
a Steiner system when all the $q_i$'s equal 2 and a generalized Steiner system when all the $q_i$'s are equal and greater than 2.

Our goal is to have such a system where the minimum distance is $2(k-t)+1$ as one of the requirements.
But, for some parameters this minimum distance is not possible. When the minimum distance will be different,
i.e., smaller, from $2(k-t)+1$ it will be mentioned and the system will be denoted by MS$_d(t,k,Q)$, where $d$ is
the minimum Hammimg distance of the system (code).
Mixed Steiner systems were defined in~\cite{Etz22} in connection of the existence of perfect codes over a mixed alphabet,
but there was no comprehensive discussion on properties, bounds, and constructions on these systems. This discussion is the
goal of the current work.

The rest of the paper is organized as follows. Section~\ref{sec:conditions} examines some of the necessary conditions
for the existence of mixed Steiner systems. Some of these conditions are generalizations of the conditions for the existence of Steiner
systems, but they are much more evolved. Some of these necessary conditions are shown to be sufficient in this section.
In Section~\ref{sec:trivial} we examine the existence of some trivial
mixed Steiner systems and show that their construction is not always straightforward.
Section~\ref{sec:perfect} examines the connections between mixed Steiner systems and perfect codes
over a mixed alphabet. Many families of mixed Stiener systems are constructed from these mixed perfect codes and necessary conditions
for the existence of these perfect codes are derived from mixed Steiner systems and as a consequence some possible parameters for perfect codes
are eliminated. Large sets of Steiner systems and resolvable Steiner systems can be used for constructions of
mixed Steiner systems.
The same can be done for large sets of mixed Steiner systems, but it is proved that unfortunately
these large sets cannot exist
if there is more than one alphabet size. Section~\ref{sec:pairwise} is devoted to pairwise balanced mixed systems, i.e., mixed
Steiner systems MS$(2,k,Q)$. We show a very simple and efficient construction of such systems based on orthogonal arrays which are
equivalent to a set of mutually orthogonal Latin squares. Some of the codes obtained from orthogonal arrays are MS$_d(2,k,Q)$,
where $d < 2k-1$. In addition, a new combinatorial design of pairwise disjoint one-factors and a set of triples is
used for one of the constructions. 
Conclusion and future research are discussed in Section~\ref{sec:conclusion}.
Our exposition will be on all parameters of mixed Steiner systems, but special emphasis will be given
to mixed Steiner Systems MS$(2,3,\Z_2^n \times \Z_q)$ for $q > 2$.

\section{Necessary Conditions}
\label{sec:conditions}

Before starting our exposition we consider the representation of words (blocks) for a mixed Steiner system.
A~block of size $k$ in a Steiner system S$(t,k,n)$
is represented by a subset of size $k$. A~codeword in a mixed Steiner system MS$(t,k,Q)$ over
$Q=\Z_{q_1} \times \Z_{q_2} \times \cdots \times \Z_{q_n}$ can be represented by a word of length $n$ and weight $k$, where the $i$-th coordinate
contains a letter from the alphabet~$\Z_{q_i}$. The codeword can be represented also as a subset of size $k$, where if a nonzero letter $\alpha$
is in the $i$-th coordinate of the codeword, then the subset contains the element $(i,\alpha)$, and as in a Steiner system the entries with
zeros are not represented. The different representations
will be used and the used one will be understood from the context. The coordinates in a codeword of S$(t,k,n)$ will be cometimes
$1,2,\ldots,n$ and sometimes $0,1,\ldots,n-1$, i.e., the elements of $Z_n$.
Note, that while $\Z_n = \{ 0,1,\ldots, n-1 \}$, $\Z_2^n$ consists of all binary words of length $n$.
If the system is over $\Z_2^n \times \Z_q$, the subset $\{ x_1,x_2,\ldots,x_r,(n+1,\alpha)\}$ correspond to the word in which $x_1,x_2,\ldots,x_r$
are $r$ position with \emph{ones} in the first $n$ positions and in the $(n+1)$-th position there is the nonzero symbol $\alpha$ from~$\Z_q$.

Let $\F_q$ denote the finite field with $q$ elements and the \emph{support} of a word $x$ is the set of nonzero positions in $c$.

Similar to Steiner systems, also mixed Steiner systems have necessary conditions for their existence.
The first two lemmas is a straightforward generalization from the known results on Steiner systems.

\begin{lemma}
The number of codewords in a mixed Steiner system \textup{MS}$(t,k,Q)$, where $Q=\Z_{q_1} \times \Z_{q_2} \times \cdots \times \Z_{q_n}$, is
$$
\left( \sum_{\substack{Y \subseteq [n] \\ \abs{Y}=t}} \prod_{j \in Y} (q_j -1) \right) \text{{\Large /}} \binom{k}{t}~.
$$
\end{lemma}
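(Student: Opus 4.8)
The plan is to count incidences by double-counting pairs $(x,c)$ where $x$ is a weight-$t$ word over $Q$ and $c \in \cC$ is the unique codeword covering it, i.e.\ $d(x,c)=k-t$. First I would count the total number of weight-$t$ words over $Q$. Such a word is specified by choosing its support $Y \subseteq [n]$ with $\abs{Y}=t$, and then assigning to each coordinate $j \in Y$ a nonzero symbol from $\Z_{q_j}$, of which there are $q_j-1$ choices. Hence the number of weight-$t$ words is exactly
$$
\sum_{\substack{Y \subseteq [n] \\ \abs{Y}=t}} \prod_{j \in Y} (q_j-1).
$$
By the defining property of a mixed Steiner system, each such word is covered by exactly one codeword, so this sum equals the number of covered pairs $(x,c)$.

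Next I would count the same pairs grouped by the codeword $c$. Fix a codeword $c$ of weight $k$; its support is a set $S$ of size $k$. A weight-$t$ word $x$ satisfies $d(x,c)=k-t$ precisely when $x$ agrees with $c$ on exactly $t$ of the coordinates in $S$ (matching both position and symbol) and is zero everywhere else. Since the nonzero symbols of $c$ are fixed, choosing such an $x$ amounts to choosing which $t$ of the $k$ nonzero coordinates of $c$ to retain, giving $\binom{k}{t}$ words covered by each codeword. The main thing to verify here is that $d(x,c)=k-t$ forces $x$ to be a ``sub-word'' of $c$ in this sense: because $\abs{\operatorname{supp}(x)}=t$ and $\abs{\operatorname{supp}(c)}=k$, any disagreement off the common support or any symbol mismatch on the common support would increase the distance beyond $k-t$, so exactly $t$ coordinates must match in both position and value.

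Combining the two counts, the number of codewords times $\binom{k}{t}$ equals the number of weight-$t$ words, which yields the claimed formula after dividing by $\binom{k}{t}$. The only subtle point — and the step I would be most careful about — is the distance computation in the second count, since over a mixed alphabet one must confirm that a symbol mismatch contributes to the Hamming distance exactly as in the binary case; this is immediate from the definition of Hamming distance but is where the mixed-alphabet setting could trip up a hasty argument. The global parameter constraint that makes the system well-defined (each weight-$t$ word covered exactly once) is exactly what licenses the double count, so no further existence assumptions are needed beyond the hypotheses already built into MS$(t,k,Q)$.
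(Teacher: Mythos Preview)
Your proposal is correct and follows essentially the same double-counting argument as the paper: count all weight-$t$ words over $Q$ by their support, use the defining covering property, and divide by the $\binom{k}{t}$ weight-$t$ words covered by each weight-$k$ codeword. Your write-up is in fact more careful than the paper's in justifying why $d(x,c)=k-t$ forces $x$ to be a sub-word of $c$, but the underlying idea is identical.
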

\begin{proof}
Let $S=(Q,\cC)$ be a mixed Steiner system MS$(t,k,Q)$.
Each word of weight $t$ must be covered by exactly one codeword of $\cC$.
For each $t$~coordinates $i_1,i_2,\ldots,i_t$, $i_1 < i_2 < \cdots < i_t$ there are $\prod_{j=1}^t (q_{i_j}-1)$ words
of weight $t$ over $Q$ whose support is $\{ i_1,i_2,\ldots,i_t\}$. Each of these $\prod_{j=1}^t (q_{i_j}-1)$
words must be covered by exactly one codeword of $\cC$.
A codeword $X$ of weight~$k$ covers exactly $\binom{k}{t}$ words of weight $t$
and hence the claim of the lemma follows.
\end{proof}

\begin{lemma}
\label{lem:derived}
If $(Q \times \Z_q,\cC)$ is a mixed Steiner system \textup{MS}$(t,k,Q \times \Z_q)$,
then for each $\alpha \in \Z_q \setminus \{ \bf0 \}$, the set
$$
\cC_\alpha \triangleq \{ c ~:~ (c,(n+1,\alpha)) \in \cC \}
$$
is a mixed Steiner system MS$(t-1,k-1,Q)$.
\end{lemma}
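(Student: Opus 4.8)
The plan is to verify directly the three defining properties of a mixed Steiner system for the code $\cC_\alpha$ with parameters $(t-1,k-1)$ over $Q=\Z_{q_1}\times\cdots\times\Z_{q_n}$: that every codeword has weight $k-1$, that every word of weight $t-1$ over $Q$ is covered by exactly one element of $\cC_\alpha$, and that the minimum distance of $\cC_\alpha$ is $2((k-1)-(t-1))+1=2(k-t)+1$. The device that drives all three is the map $c\mapsto(c,(n+1,\alpha))$ appending the (nonzero) symbol $\alpha$ in the last coordinate; by the definition of $\cC_\alpha$ this is a bijection between $\cC_\alpha$ and the set of codewords of $\cC$ whose $(n+1)$-th coordinate equals $\alpha$.

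The weight statement is immediate: if $(c,(n+1,\alpha))\in\cC$ then it has weight $k$, and since the last coordinate carries the nonzero symbol $\alpha$ it contributes exactly $1$ to the weight, so $c$ has weight $k-1$. For the covering property I would start from an arbitrary word $x$ of weight $t-1$ over $Q$ and form $(x,(n+1,\alpha))$, which has weight $t$ over $Q\times\Z_q$ because $\alpha\neq\mathbf 0$. By the MS$(t,k,Q\times\Z_q)$ property there is a unique codeword $C\in\cC$ with $d((x,(n+1,\alpha)),C)=k-t$; this minimal distance forces the support of $(x,(n+1,\alpha))$ to lie inside the support of $C$ with matching symbols, and in particular, since coordinate $n+1$ is nonzero in $(x,(n+1,\alpha))$, it must be nonzero in $C$ with $C_{n+1}=\alpha$. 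Hence $C=(c,(n+1,\alpha))$ for some $c\in\cC_\alpha$, and deleting the (agreeing) last coordinate gives $d(x,c)=k-t=(k-1)-(t-1)$, so $c$ covers $x$ in $\cC_\alpha$. Uniqueness transfers the same way: if $c,c'\in\cC_\alpha$ both cover $x$, then $(c,(n+1,\alpha))$ and $(c',(n+1,\alpha))$ both cover $(x,(n+1,\alpha))$, so they coincide by uniqueness in $\cC$, whence $c=c'$.

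For the minimum distance I would note that distinct $c,c'\in\cC_\alpha$ correspond to distinct codewords $(c,(n+1,\alpha)),(c',(n+1,\alpha))\in\cC$ that agree in coordinate $n+1$; therefore no distance is contributed there and $d(c,c')=d((c,(n+1,\alpha)),(c',(n+1,\alpha)))\geq 2(k-t)+1$, the minimum distance of $\cC$. Since $2(k-t)+1$ is exactly the value $2((k-1)-(t-1))+1$ demanded of MS$(t-1,k-1,Q)$, the derived code meets the required minimum distance, and all three conditions hold.

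I expect the only genuinely delicate step to be the symbol transfer in the covering argument: one must use $\alpha\neq\mathbf 0$ to conclude that the unique covering codeword $C$ of $(x,(n+1,\alpha))$ actually carries $\alpha$ in coordinate $n+1$ (and hence lies in the image of $\cC_\alpha$), rather than merely having some nonzero symbol there. Everything else is bookkeeping once one observes that appending a fixed nonzero symbol in the last coordinate preserves Hamming distances and shifts both the weight and the covering parameters down by exactly one.
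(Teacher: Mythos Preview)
Your argument is correct. The paper does not actually supply a proof of this lemma; it is stated there without proof, accompanied only by the remark that the first two lemmas are ``a straightforward generalization from the known results on Steiner systems.'' Your verification of the three defining properties is exactly the natural way to fill in the details, and the one point you flag as delicate --- that $d((x,(n+1,\alpha)),C)=k-t$ with $\mathrm{wt}(x,(n+1,\alpha))=t$ and $\mathrm{wt}(C)=k$ forces $C$ to agree with $(x,(n+1,\alpha))$ on every nonzero coordinate of the latter, in particular $C_{n+1}=\alpha$ --- is handled correctly (a short counting argument on agreeing/disagreeing support positions shows this). The minimum-distance check, via the distance-preserving embedding $c\mapsto(c,(n+1,\alpha))$, is likewise sound.
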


If $(Q \times \Z_q,\cC)$ is a mixed Steiner system \textup{MS}$(t,k,Q \times \Z_q)$ then the system
MS$(t-1,k-1,Q)$ is called \emph{the derived design} or \emph{the derived system}.
Note, that $\Z_q$ is not necessarily the alphabet of the largest size.

\begin{corollary}
\label{cor:necessary_MST}
A necessary condition for the existence of a mixed Steiner system \textup{MS}$(t,k,Q)$, where
$Q=\Z_{q_1} \times \Z_{q_2} \times \cdots \times \Z_{q_n}$, is that for each $i$, $0 \leq i \leq t-1$,
and each subset $X$ of $[n]$ whose size is $n-i$, we have
$$
\sum_{\substack{Y \subseteq X \\ \abs{Y}=t-i}} \prod_{j \in Y} (q_j -1) \equiv 0 \left( \hspace{-0.1cm} \mmod ~ \binom{k-i}{t-i} \right)~.
$$
\end{corollary}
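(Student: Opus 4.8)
The plan is to obtain each congruence from the integrality of a codeword count, applied not to the whole system but to a suitably iterated derived design. The case $i=0$, $X=[n]$ is immediate from Lemma~1: it gives the number of codewords of MS$(t,k,Q)$ as $\left(\sum_{Y\subseteq[n],\,\abs{Y}=t}\prod_{j\in Y}(q_j-1)\right)/\binom{k}{t}$, and since this is the cardinality of $\cC$ it is a non-negative integer, so $\binom{k}{t}$ divides the numerator, which is exactly the claimed congruence for $i=0$.

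For a general $i$ with $0\le i\le t-1$ and an arbitrary $X\subseteq[n]$ of size $n-i$, write $I=[n]\setminus X$, so that $\abs{I}=i$. First I would reindex the coordinates so that those in $I$ occupy the last $i$ positions; this is harmless because the definition of MS$(t,k,Q)$ imposes no particular order on the alphabet sizes $q_j$. I then peel these coordinates off one at a time via Lemma~\ref{lem:derived}: picking any nonzero symbol $\alpha$ in the alphabet of the current last coordinate $j$ (which exists since $q_j\ge 2$) produces a mixed Steiner system in which the pair $(t,k)$ has each entry decreased by one and coordinate $j$ has been removed. Iterating this $i$ times, once for each coordinate of $I$, yields a mixed Steiner system MS$(t-i,k-i,Q_X)$ with $Q_X=\prod_{j\in X}\Z_{q_j}$.

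It remains to apply Lemma~1 to this derived system MS$(t-i,k-i,Q_X)$. Its codeword count equals $\left(\sum_{Y\subseteq X,\,\abs{Y}=t-i}\prod_{j\in Y}(q_j-1)\right)/\binom{k-i}{t-i}$; being the cardinality of an actual code, it is an integer. Hence $\binom{k-i}{t-i}$ divides $\sum_{Y\subseteq X,\,\abs{Y}=t-i}\prod_{j\in Y}(q_j-1)$, which is precisely the asserted congruence.

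The argument is short once the two preceding lemmas are available, and the only delicate point is the bookkeeping of the iteration. The main thing to check is that Lemma~\ref{lem:derived}, which is phrased for the distinguished final coordinate, may be applied to every coordinate of $I$ and that successive derivations compose: after one coordinate is removed, the resulting object is itself a mixed Steiner system over the truncated alphabet, so Lemma~\ref{lem:derived} applies again. Since the count produced by Lemma~1 depends only on the parameters and the surviving alphabet sizes, and not on which nonzero symbols $\alpha$ were fixed along the way, the choices made during the peeling do not affect the final divisibility, and the congruence holds for every such $X$.
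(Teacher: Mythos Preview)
Your proposal is correct and follows exactly the approach the paper intends: the corollary is stated immediately after Lemmas~1 and~2 without a separate proof, and the natural reading is precisely the argument you give---iterate Lemma~\ref{lem:derived} $i$ times to strip off the coordinates in $[n]\setminus X$ (the paper explicitly notes that the removed coordinate need not carry the largest alphabet), then apply Lemma~1 to the resulting $\mathrm{MS}(t-i,k-i,Q_X)$ to obtain the divisibility. Your bookkeeping remarks about reindexing and the independence of the conclusion from the particular nonzero symbols chosen are accurate and complete the argument.
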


Lemma~\ref{lem:derived} implies that if there exists a mixed Steiner system MS$(t,k,Q)$, then there exists a mixed Steiner system MS$(1,k-t+1,Q')$,
where $Q'$ is obtained from $Q$ by deleting $t-1$ coordinates and therefore we start by considering the necessary conditions for
the existence of such a system.

\begin{theorem}
\label{thm:lastN}
If there exists a mixed Steiner system \textup{MS}$(1,k,Q \times \Z_q)$, where
$Q = \Z_{q_1} \times \cdots \times \Z_{q_{n-1}}$,
then $\sum_{i=1}^{n-1} (q_i -1) - (q -1)(k-1)$ is not negative and divisible by $k$.
\end{theorem}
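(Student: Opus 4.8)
The plan is to combine the global codeword count from the first lemma with a local analysis of the last coordinate. First I would record the structural consequence of the covering condition in the case $t=1$: a weight-$1$ word with support $\{i\}$ and nonzero symbol $\alpha$ is covered by a codeword $c$ precisely when $i$ lies in the support of $c$ and $c_i=\alpha$, since then the two words agree at position $i$ and differ exactly on the remaining $k-1$ nonzero positions of $c$. Hence requiring that every weight-$1$ word be covered by exactly one codeword is equivalent to the statement that, for each coordinate $i$ and each nonzero symbol $\alpha\in\Z_{q_i}$, exactly one codeword of $\cC$ carries $\alpha$ in position $i$. Only the covering requirement, and not the minimum-distance condition, is used here.

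Next I would apply this to the last coordinate, whose alphabet is $\Z_q$. Each of the $q-1$ nonzero symbols of $\Z_q$ occurs in the last coordinate of exactly one codeword, so exactly $q-1$ codewords have a nonzero last entry and the remaining codewords have a zero there. Writing $N=\abs{\cC}$, the first lemma with $t=1$ gives
$$
Nk=\sum_{i=1}^{n-1}(q_i-1)+(q-1),
$$
equivalently $\sum_{i=1}^{n-1}(q_i-1)=Nk-(q-1)$.

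Substituting this into the target expression would then finish the computation:
$$
\sum_{i=1}^{n-1}(q_i-1)-(q-1)(k-1)=Nk-(q-1)-(q-1)(k-1)=\bigl(N-(q-1)\bigr)k,
$$
which is an integer multiple of $k$. Nonnegativity follows because $N-(q-1)$ counts precisely the codewords with a zero last entry and is therefore a nonnegative integer; indeed $N\ge q-1$ since the $q-1$ codewords with nonzero last entry are distinct.

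I expect the only genuinely delicate point to be the verification of the covering characterization in the first step, namely making sure that $c$ covers a given weight-$1$ word if and only if it carries the matching symbol in the corresponding coordinate, so that no codeword is counted twice and the "exactly one per symbol per column" picture is exact. An essentially equivalent route avoids the count of the first lemma and instead double-counts the number of nonzero entries in the first $n-1$ coordinates: column by column this total is $\sum_{i=1}^{n-1}(q_i-1)$, while row by row it is $(q-1)(k-1)+(N-(q-1))k$, and equating the two yields the same identity. I would keep the first lemma as the main tool, since it makes the bookkeeping shortest.
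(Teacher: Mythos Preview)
Your proof is correct and rests on the same observation as the paper: exactly $q-1$ codewords carry a nonzero symbol in the last coordinate, and the remaining codewords account for the leftover nonzero elements in the first $n-1$ coordinates. The only cosmetic difference is that you route the arithmetic through the global codeword count of Lemma~1 to get $\sum_{i=1}^{n-1}(q_i-1)-(q-1)(k-1)=(N-(q-1))k$, whereas the paper argues directly by counting nonzero elements in the first $n-1$ columns---which is precisely the ``equivalent route'' you describe at the end.
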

\begin{proof}
There are $q -1$ nonzero alphabet letters in $\Z_q$. Each one of these alphabet letters must be contained in a separate block
of size $k$ with $k-1$ other nonzero elements and hence the blocks that contain nonzero alphabet letters
from $\Z_q$ contain $(q -1)(k-1)$ nonzero elements not from~$\Z_q$.
Hence, $\sum_{i=1}^{n-1} (q_i -1) \geq (q -1)(k-1)$, i.e., $\sum_{i=1}^{n-1} (q_i -1) - (q -1)(k-1)$ is not negative.
This difference must be divisible by $k$ since each block that does not contain nonzero elements from~$\Z_q$ contains exactly $k$ nonzero distinct elements.
\end{proof}


%


We continue and consider necessary conditions for some basic mixed Steiner systems. We consider the necessary conditions for
the existence of a mixed Steiner system MS$(t,t+1,\Z_2^n \times Z_q)$. We start with $t=1$.

\begin{theorem}
\label{thm:NS_t=1}
A mixed Steiner system \textup{MS}$(1,2,\Z_2^n \times \Z_q)$, where $q>2$, exists if and only if $n \geq q-1$ and $n-q+1$ is an even integer.
\end{theorem}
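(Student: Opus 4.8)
The plan is to reduce everything to the very rigid shape of weight-$2$ codewords over $\Z_2^n \times \Z_q$. A weight-$2$ codeword is of exactly one of two kinds: a \emph{binary} block $\{i,j\}$ carrying ones in two of the first $n$ coordinates ($1 \le i < j \le n$), or a \emph{mixed} block $\{i,(n+1,\alpha)\}$ carrying a one in a binary coordinate $i$ and a nonzero symbol $\alpha \in \Z_q \setminus \{0\}$ in the last coordinate. Likewise a weight-$1$ word is either $e_i$ (a single one in coordinate $i \le n$) or $w_\alpha$ (the symbol $\alpha$ in the last coordinate). A one-line distance count then shows the covering pattern: $\{i,j\}$ covers precisely $e_i$ and $e_j$, and $\{i,(n+1,\alpha)\}$ covers precisely $e_i$ and $w_\alpha$. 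In particular the $q-1$ words $w_\alpha$ can only be covered by mixed blocks.

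For necessity I would simply invoke Theorem~\ref{thm:lastN} with $k=2$: each of the $n$ binary coordinates contributes $q_i-1 = 1$, so the sum there equals $n$, while $(q-1)(k-1)=q-1$, and the theorem's conclusion that $n-(q-1)$ is nonnegative and divisible by $k=2$ is exactly the stated pair of conditions $n \ge q-1$ and $n-q+1$ even. The same conclusion also drops out of the covering pattern above: uniqueness of covering forces one mixed block per symbol $\alpha$, hence $q-1$ mixed blocks on $q-1$ distinct binary coordinates, whence $n \ge q-1$, and the remaining $n-(q-1)$ binary coordinates must be exhausted by disjoint binary blocks, whence $n-q+1$ is even.

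For sufficiency, assuming $n \ge q-1$ and $n-q+1$ even, I would build the code explicitly: pick a bijection between the $q-1$ nonzero symbols and some $q-1$ of the binary coordinates and take the associated mixed blocks, then split the remaining $n-(q-1)$ binary coordinates into pairs (possible since this count is even) and take the associated binary blocks. Reading off the covering pattern, each $w_\alpha$ is covered by its unique mixed block and each $e_i$ by the unique block containing coordinate $i$, so the covering axiom holds.

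The step demanding the most care is checking that the minimum distance equals the prescribed value $2(k-t)+1 = 3$ rather than something larger. I would first observe that the covering axiom already rules out any pair of codewords at distance $1$ or $2$: a short case analysis shows that two weight-$2$ codewords within distance $2$ must agree on one of their nonzero coordinates, hence cover a common weight-$1$ word, contradicting uniqueness; so the minimum distance is at least $3$. The matching upper bound is exactly where the hypothesis $q>2$ enters: since $q-1 \ge 2$ there are at least two mixed blocks $\{i,(n+1,\alpha)\}$ and $\{i',(n+1,\alpha')\}$ with $i\ne i'$ and $\alpha \ne \alpha'$, and these disagree in precisely the three coordinates $i,i',n+1$. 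Hence the minimum distance is exactly $3$, and this is the single point where $q>2$ (and not merely $q \ge 2$) is essential.
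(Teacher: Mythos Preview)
Your proof is correct and follows essentially the same route as the paper: the necessity argument matches the paper's direct count (and your invocation of Theorem~\ref{thm:lastN} with $k=2$ is a legitimate shortcut), and your explicit construction is identical to the paper's. You actually go further than the paper by carefully verifying that the minimum distance is exactly $3$ and isolating the one place where the hypothesis $q>2$ is genuinely needed, a point the paper's own proof leaves implicit.
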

\begin{proof}
Each nonzero element of $\Z_q$ is paired with a different coordinate and hence $n \geq q-1$. There are $q -1$ nonzero elements
in $\Z_q$ and hence the other $n - q+1$ coordinates are partitioned into disjoint pairs and therefore $n-q+1$ must be an even integer.
A construction of such a mixed Steiner system MS$(1,2,\Z_2^n \times Z_q)$ is done exactly following these lines.
The first set of blocks of such system are
$$
\{ \{(i,1),(n+1,\alpha_i)\} ~:~ 1 \leq i \leq q -1 \},
$$
where $\alpha_i$ is the $i$-th nonzero element of $\Z_q$.

The second set of blocks of such system are
$$
\{ \{(q +2i,1),(q+2i+1,1)\} ~:~ 0 \leq i \leq \frac{n-q-1}{2} \}.
$$
\end{proof}

The next steps are for $t=2$ and $t=3$, as follows.
\begin{theorem}
\label{thm:necessry2,3}
If there exists mixed Steiner system \textup{MS}$(2,3,\Z_2^n \times \Z_q)$, then $n \geq q$ and also
either 6~divides $n$ and $q$ is even or $n \equiv q \equiv 2$ or $4~ (\mmod ~ 6)$.
\end{theorem}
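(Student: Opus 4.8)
The plan is to classify the weight-$3$ codewords and the weight-$2$ words according to how they use the single $\Z_q$-coordinate, and then read off the stated arithmetic from elementary counting together with two applications of the derived-design construction. A weight-$3$ block is of one of two types: a \emph{pure} block $\{i,j,l\}$ with three ones among the first $n$ (binary) coordinates and a zero in the $\Z_q$-coordinate, or a \emph{mixed} block $\{(i,1),(j,1),(n+1,\alpha)\}$ with two binary ones and a nonzero $\alpha\in\Z_q$. Likewise a weight-$2$ word is either a \emph{binary pair} $\{i,j\}$ or a \emph{mixed pair} $\{(i,1),(n+1,\alpha)\}$; there are $\binom{n}{2}$ of the former and $n(q-1)$ of the latter. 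First I would record the key observation that a mixed pair can be covered only by a mixed block, and that each mixed block covers exactly two mixed pairs (those obtained by deleting one of its two binary ones), so the number of mixed blocks equals $n(q-1)/2$.

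Next I would invoke Lemma~\ref{lem:derived} in two ways. Fixing the $\Z_q$-coordinate to a nonzero $\alpha$ yields a mixed Steiner system MS$(1,2,\Z_2^n)$, i.e.\ an ordinary S$(1,2,n)$, which is a perfect matching of $[n]$; hence $n$ is even and $\cC_\alpha$ consists of $n/2$ binary pairs. Fixing instead one binary coordinate to $1$ yields a mixed Steiner system MS$(1,2,\Z_2^{n-1}\times\Z_q)$, and Theorem~\ref{thm:NS_t=1} then forces $n-1\geq q-1$ and $(n-1)-q+1=n-q$ to be even. Combined with $n$ even, this gives that $q$ is even and reproves $n\geq q$.

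The third step is a count of the pure blocks. Since each binary pair is covered by a unique block, the matchings $\cC_\alpha$ for distinct $\alpha$ are pairwise edge-disjoint and disjoint from the pairs covered by pure blocks; they account for $(q-1)n/2$ binary pairs, so the pure blocks must cover the remaining $\binom{n}{2}-(q-1)n/2=n(n-q)/2$ binary pairs, three at a time. Hence the number of pure blocks is $n(n-q)/6$, which must be a nonnegative integer: this re-gives $n\geq q$ and yields $6\mid n(n-q)$.

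Finally I would combine the parities with this divisibility. Because $n$ and $q$ are both even, $n(n-q)$ is already a multiple of $4$, so $6\mid n(n-q)$ is equivalent to $3\mid n(n-q)$, i.e.\ $3\mid n$ or $n\equiv q\pmod 3$. A short case analysis on $n\bmod 6\in\{0,2,4\}$ then produces exactly the three possibilities: $n\equiv 0\pmod 6$ with $q$ even, $n\equiv q\equiv 2\pmod 6$, and $n\equiv q\equiv 4\pmod 6$. I expect the main obstacle to be conceptual rather than computational: one must notice that the parity of $q$ is invisible to the pure-block count alone (which gives only $6\mid n(n-q)$, satisfied even for odd $q$ when $6\mid n$) and is instead forced by deleting a \emph{binary} coordinate and appealing to Theorem~\ref{thm:NS_t=1}. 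Setting up the two derived designs correctly and recognizing the edge-disjointness of the matchings $\cC_\alpha$ is the crux, after which the modular bookkeeping is routine.
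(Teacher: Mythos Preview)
Your proof is correct and follows essentially the same approach as the paper: both invoke the two derived designs (at the $\Z_q$-coordinate to get $n$ even, and at a binary coordinate via Theorem~\ref{thm:NS_t=1} to get $n\geq q$ and $q$ even) and then a pair-counting argument for the residual divisibility by~$3$. The only cosmetic difference is that you isolate the pure blocks and require $6\mid n(n-q)$, whereas the paper counts all pairs at once and requires $3\mid \tfrac{n(2q+n-3)}{2}$; since $n(2q+n-3)\equiv n(n-q)\pmod{3}$ these conditions coincide.
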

\begin{proof}
Since the two possible derived systems of an \textup{MS}$(2,3,\Z_2^n \times \Z_q)$ are S$(1,2,n)$ (which implies that $n$ is even) and
MS$(2,3,\Z_2^{n-1} \times Z_q)$, it follows by Theorem~\ref{thm:NS_t=1} that $n-1 \geq q-1$ and $n-q$ is even.
As a consequence we have that both $q$ and $n$ are even.

The number of pairs in $\Z_2^n \times \Z_q$ is $\frac{n(n-1)}{2} + (q-1)n = \frac{n(2q+n-3)}{2}$ and since each triple contains
three pairs, it follows that 3 divides $\frac{n(2q+n-3)}{2}$. Since $q$ and $n$ are even it implies that if $n \equiv 0~(\mmod~6)$
then $q$ is even, but if $n \equiv 2$ or $4~(\mmod~6)$, then $q \equiv n~(\mmod~6)$.
\end{proof}

\begin{theorem}
\label{thm:necessry3,4}
If there exists mixed Steiner system \textup{MS}$(3,4,\Z_2^n \times \Z_q)$, then $n \geq q+1$ and also
either 6 divide $n-1$ and $q$ is even or $n \equiv 3~ (\mmod ~ 12)$ and $q \equiv 2~(\mmod~ 12)$
or $n \equiv 9~ (\mmod ~ 12)$ and $q \equiv 2~(\mmod~ 6)$.
\end{theorem}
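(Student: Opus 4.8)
The plan is to mirror the proof of Theorem~\ref{thm:necessry2,3}: I would extract the conditions on $n$ and the inequality $n\geq q+1$ from the two derived systems supplied by Lemma~\ref{lem:derived}, and then impose the divisibility of the total number of triples by $\binom{4}{3}=4$ to sharpen the conditions on $q$ modulo~$12$. The two ingredients are thus the derived systems and the counting constraint of Corollary~\ref{cor:necessary_MST}.

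First I would record the two derived systems of an MS$(3,4,\Z_2^n\times\Z_q)$. Fixing a nonzero symbol in the $\Z_q$-coordinate yields, by Lemma~\ref{lem:derived}, a derived MS$(2,3,\Z_2^n)$, i.e.\ an ordinary Steiner triple system S$(2,3,n)$, whose existence forces $n\equiv 1$ or $3\pmod 6$. Fixing instead the symbol $1$ in one binary coordinate yields a derived MS$(2,3,\Z_2^{n-1}\times\Z_q)$, to which Theorem~\ref{thm:necessry2,3} applies with $n$ replaced by $n-1$; this gives $n-1\geq q$ (hence $n\geq q+1$) together with the alternative that either $6\mid(n-1)$ and $q$ is even, or $(n-1)\equiv q\equiv 2$ or $4\pmod 6$. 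Intersecting these two sets of conditions, the residue $(n-1)\equiv 4\pmod 6$ is impossible, since it forces $n\equiv 5\pmod 6$, which the Steiner-triple-system condition excludes; so the second alternative collapses to $(n-1)\equiv q\equiv 2\pmod 6$, that is $n\equiv 3\pmod 6$ and $q\equiv 2\pmod 6$. Together with the first alternative this already yields two branches: $n\equiv 1\pmod 6$ with $q$ even, which is the first case of the theorem, and $n\equiv 3\pmod 6$ with $q\equiv 2\pmod 6$.

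It remains to refine the branch $n\equiv 3\pmod 6$ to the stated conditions modulo~$12$, and this is where the counting enters and where the real work lies. By Corollary~\ref{cor:necessary_MST} with $i=0$ (equivalently, because the codeword count of the first lemma must be an integer), the total number of triples
$$\binom{n}{3}+\binom{n}{2}(q-1)=\frac{n(n-1)(n+3q-5)}{6}$$
must be divisible by $4$, so that $24\mid n(n-1)(n+3q-5)$. Under the standing conditions $n$ is odd and $q$ is even, hence $n$ is coprime to $2$ while both $n-1$ and $n+3q-5$ are even; the divisibility by $3$ is automatic from $3\mid n$ when $n\equiv 3\pmod 6$, so the entire content is the requirement that $(n-1)(n+3q-5)$ be divisible by $8$.

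The main obstacle is therefore a clean $2$-adic analysis: I would split $n\equiv 3\pmod 6$ according to $n\equiv 3$ or $9\pmod{12}$ (equivalently $n\equiv 3$ or $1\pmod 4$) and track the valuations $v_2(n-1)$ and $v_2(n+3q-5)$. When $n\equiv 9\pmod{12}$ one has $4\mid(n-1)$, so the even factor $n+3q-5$ already supplies the remaining power of $2$, no further constraint on $q$ beyond $q\equiv 2\pmod 6$ survives, and this gives the third case. When $n\equiv 3\pmod{12}$ one has exactly $v_2(n-1)=1$, so $n+3q-5\equiv 0\pmod 4$ is forced; writing this out with $n\equiv 3\pmod 4$ gives $3q\equiv 2\pmod 4$, hence $q\equiv 2\pmod 4$, which combined with $q\equiv 2\pmod 6$ yields $q\equiv 2\pmod{12}$ by the Chinese remainder theorem, giving the second case. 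For the branch $n\equiv 1\pmod 6$ the derived systems already deliver the first case ($q$ even); the triple-count divisibility may be imposed there as well, but only sharpens the condition within "$q$ even" and is not needed for the stated conclusion. Assembling the three branches completes the argument; I expect the bookkeeping of the $2$-adic valuations, and the care needed to land on exactly the three listed residue classes rather than a coarser or finer list, to be the only delicate point.
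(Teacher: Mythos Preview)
Your proposal is correct and follows essentially the same approach as the paper: derive the baseline congruence and inequality conditions from the two derived systems (S$(2,3,n)$ and MS$(2,3,\Z_2^{n-1}\times\Z_q)$) via Lemma~\ref{lem:derived} and Theorem~\ref{thm:necessry2,3}, and then refine the $n\equiv 3\pmod 6$ branch by imposing that $4$ divide the triple count $n(n-1)(n+3q-5)/6$. Your $2$-adic case analysis is carried out more explicitly than in the paper, but the argument is the same.
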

\begin{proof}
Since the two possible derived systems of \textup{MS}$(3,4,\Z_2^n \times \Z_q)$ are S$(2,3,n)$ (which implies that $n \equiv 1$ or $3~(\mmod~6)$) and
MS$(2,3,\Z_2^{n-1} \times \Z_q)$, it follows by Theorem~\ref{thm:necessry2,3} that $n \geq q+1$ and $n \equiv 1$ or $3~(\mmod~6)$.
Moreover, if $n \equiv 1~(\mmod~6)$, then $q$ is even and if $n \equiv 3~(\mmod~6)$, then $q \equiv 2~(\mmod ~6)$.

The number of triples in $\Z_2^n \times \Z_q$ is $\frac{n(n-1)(n-2)}{6} + (q-1)\frac{n(n-1)}{2} = \frac{n(n-1)(3q+n-5)}{6}$ and since each quadruple contains
four triples, it follows that 4 divides $\frac{n(n-1)(3q+n-5)}{6}$. This implies that if $n \equiv 3~(\mmod~12)$
then $q \equiv 2~(\mmod~12)$ and if $n \equiv 9~(\mmod~12)$, then $q \equiv 2~(\mmod~6)$.
\end{proof}

We can continue and find the necessary conditions for $t > 3$ in a similar way.
A generalization for Theorem~\ref{thm:NS_t=1} is given as follows:

\begin{theorem}
\label{thm:NS_t=1gk}
A mixed Steiner system \textup{MS}$(1,k,\Z_2^n \times \Z_q)$ exists if and only if $n \geq (k-1)(q-1)$ and $n - (k-1)(q-1)$ is divisible by $k$.
\end{theorem}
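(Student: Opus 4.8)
The plan is to prove the two directions separately: necessity by the counting argument already encapsulated in Theorem~\ref{thm:lastN}, and sufficiency by an explicit block construction followed by a minimum-distance check.

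For necessity, I would specialize Theorem~\ref{thm:lastN} to the present alphabet, where the $n$ binary coordinates play the role of the $q_i$'s with every $q_i=2$. Concretely, the $q-1$ weight-one words $(n+1,\alpha)$, $\alpha\in\Z_q\setminus\{0\}$, must each be covered by a unique codeword, and no codeword can contain two of them since coordinate $n+1$ carries a single value. Hence there are $q-1$ pairwise distinct blocks, each consisting of one such $\Z_q$-point together with $k-1$ binary ones; as every binary point $(i,1)$ is covered exactly once, these $(q-1)(k-1)$ binary points are distinct, giving $n\ge(k-1)(q-1)$. The leftover $n-(k-1)(q-1)$ binary points are covered by all-binary weight-$k$ blocks that partition them, so $k$ divides $n-(k-1)(q-1)$.

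For sufficiency I would build the code directly. Writing $n-(k-1)(q-1)=mk$ with $m\ge 0$ and listing the nonzero elements of $\Z_q$ as $\alpha_1,\dots,\alpha_{q-1}$, I first pair each $\alpha_i$ with a fresh run of $k-1$ binary coordinates,
$$
B_i=\{((i-1)(k-1)+1,1),\dots,(i(k-1),1),(n+1,\alpha_i)\},\quad 1\le i\le q-1,
$$
and then partition the remaining $mk$ binary coordinates into all-binary weight-$k$ blocks
$$
D_j=\{((k-1)(q-1)+(j-1)k+1,1),\dots,((k-1)(q-1)+jk,1)\},\quad 1\le j\le m.
$$
By construction every weight-one word is covered exactly once, so the covering condition holds.

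The remaining, and only delicate, step is to verify that $\cC=\{B_i\}\cup\{D_j\}$ has minimum distance exactly $2k-1$. Since all blocks have pairwise disjoint binary supports and the blocks touching coordinate $n+1$ carry distinct nonzero symbols, a three-case check suffices: two all-binary blocks, or one binary block and one $\Z_q$-block, are at distance $2k$, whereas two blocks $B_i,B_{i'}$ with $i\ne i'$ differ on $2(k-1)$ binary coordinates and on coordinate $n+1$ (as $\alpha_i\ne\alpha_{i'}$), hence at distance exactly $2k-1$. For $q>2$ at least two blocks of the latter kind exist, pinning the minimum distance to $2k-1$. I expect this collision between two $\Z_q$-blocks---forcing the distance down to $2k-1$ rather than leaving it at $2k$---to be the one point needing care; everything else is the routine counting already present in Theorem~\ref{thm:lastN}.
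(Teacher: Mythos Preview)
Your proposal is correct and follows essentially the same approach as the paper: necessity via the counting of Theorem~\ref{thm:lastN} and sufficiency via an explicit partition of the weight-one words into blocks. Your write-up is in fact more careful than the paper's on two points: you give the first family of blocks with the correct $k-1$ binary coordinates (the paper's displayed construction is a leftover from the $k=2$ case and lists only a single binary coordinate), and you explicitly verify the minimum Hamming distance $2k-1$, which the paper does not address at all despite it being part of the definition of an MS$(1,k,Q)$.
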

\begin{proof}
Each nonzero element of $\Z_q$ is paired with a disjoint $k-1$ coordinates in the different
codewords and hence $n \geq (k-1)(q-1)$. There are $q -1$ nonzero elements
in $\Z_q$ and hence the other $n - (k-1)(q-1)$ coordinates are partitioned into disjoint $k$-subsets and hence $n - (k-1)(q-1)$ must be
divisible by $k$.
Construction of such a mixed Steiner system MS$(1,k,\Z_2^n \times \Z_q)$ is done exactly following these lines.
The first set of blocks of such system are
$$
\{ \{(i,1),(n+1,\alpha_i)\} ~:~ 1 \leq i \leq q -1 \},
$$
where $\alpha_i$ is the $i$-th nonzero element of $\Z_q$.

The second set of blocks of such system are
$$
\{ \{((k-1)(q-1) +ki+1,1),((k-1)(q-1) +ki+2,1),\ldots, ((k-1)(q-1) +k(i+1),1) \}
$$
$$
:~ 0 \leq i \leq \frac{n - (k-1)(q-1)-k}{k} \}.
$$
\end{proof}

Finally, Theorem~\ref{thm:necessry2,3} is generalized as follow.

\begin{theorem}
\label{thm:necessry2,k}
If there exists a mixed Steiner system \textup{MS}$(2,k,\Z_2^n \times \Z_q)$, then $n \geq (k-2)(q-1) +1$, $k-1$ divides $n$,
$q \equiv 2~(\mmod ~ k-1)$, and $k(k-1)$ divides $n(n+2q-3)$.
\end{theorem}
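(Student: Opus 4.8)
The plan is to mirror the two-pronged argument used for Theorem~\ref{thm:necessry2,3}: first squeeze out the inequality and the two divisibility statements from the two flavors of derived system provided by Lemma~\ref{lem:derived}, and then obtain the remaining congruence from a global count of weight-$2$ words. For the first flavor, I would fix a nonzero $\alpha \in \Z_q$ and derive on the $\Z_q$-coordinate; by Lemma~\ref{lem:derived} this produces a system MS$(1,k-1,\Z_2^n)$, which is an ordinary Steiner system S$(1,k-1,n)$. Its blocks partition the $n$ points into classes of size $k-1$, so $(k-1)\mid n$, which is the second asserted condition. For the second flavor, I would regard one of the binary coordinates as the final coordinate (its only nonzero symbol being $1$) and derive on it, obtaining a system MS$(1,k-1,\Z_2^{n-1}\times\Z_q)$. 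Feeding this into Theorem~\ref{thm:NS_t=1gk} with $n$ replaced by $n-1$ and $k$ replaced by $k-1$ yields both $n-1\geq(k-2)(q-1)$, i.e.\ the first asserted inequality $n\geq(k-2)(q-1)+1$, and the divisibility $(k-1)\mid\bigl((n-1)-(k-2)(q-1)\bigr)$.

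Next I would convert this last divisibility into the congruence $q\equiv 2\ (\mmod\ k-1)$. Since $k-2\equiv-1\ (\mmod\ k-1)$, we have $(k-2)(q-1)\equiv 1-q\ (\mmod\ k-1)$, so that $(n-1)-(k-2)(q-1)\equiv n+q-2\ (\mmod\ k-1)$. Requiring this to vanish and using $(k-1)\mid n$ from the previous paragraph leaves $q-2\equiv 0\ (\mmod\ k-1)$, which is exactly the third condition.

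Finally, for the fourth condition I would count the weight-$2$ words over $\Z_2^n\times\Z_q$. There are $\binom{n}{2}$ of them supported entirely on the binary coordinates, and $n(q-1)$ with one nonzero binary coordinate together with a nonzero $\Z_q$-symbol, for a total of $\frac{n(n-1)}{2}+n(q-1)=\frac{n(n+2q-3)}{2}$. Each codeword of weight $k$ covers exactly $\binom{k}{2}$ pairs and each pair is covered exactly once, so $\binom{k}{2}=\frac{k(k-1)}{2}$ must divide $\frac{n(n+2q-3)}{2}$; clearing the common factor of $2$ gives $k(k-1)\mid n(n+2q-3)$, the fourth condition.

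The counts and the modular reductions are routine, so no step is a genuine obstacle; the only point demanding care is the application of Lemma~\ref{lem:derived} to a binary coordinate rather than to the $\Z_q$-coordinate for which it is literally stated, and keeping the parameter shift $n\to n-1$, $k\to k-1$ consistent when invoking Theorem~\ref{thm:NS_t=1gk}. The real content of the theorem is simply that all four conditions can be assembled coherently from these two derived systems and one counting identity.
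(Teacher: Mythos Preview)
Your proposal is correct and follows essentially the same route as the paper: derive on the $\Z_q$-coordinate to get $(k-1)\mid n$, derive on a binary coordinate and invoke Theorem~\ref{thm:NS_t=1gk} to obtain the inequality and the congruence on $q$, then count weight-$2$ words for the final divisibility. You even spell out the modular reduction $(k-2)(q-1)\equiv 1-q\pmod{k-1}$ that the paper leaves as ``simple algebraic manipulation''; and your concern about applying Lemma~\ref{lem:derived} to a binary coordinate is already handled by the paper's remark that the $\Z_q$ in that lemma need not be the largest alphabet.
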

\begin{proof}
Since the two possible derived systems of an \textup{MS}$(2,k,\Z_2^n \times \Z_q)$ are S$(1,k-1,n)$ (which implies that $k-1$ divides $n$) and
MS$(1,k-1,\Z_2^{n-1} \times \Z_q)$, then by Theorem~\ref{thm:NS_t=1gk} we have that $n \geq (k-2)(q-1) +1$ and $k-1$ divides
$n-1 - (k-2)(q-1)$ which together with the previous condition that $k-1$ divides~$n$ implies that $k-1$ divides $q-2$
(using simple algebraic manipulation), i.e, $q \equiv 2~(\mmod ~ k-1)$.

The number of pairs in $\Z_2^n \times \Z_q$ is $\frac{n(n-1)}{2} + (q-1)n = \frac{n(2q+n-3)}{2}$ and since each $k$-subset contains
$\binom{k}{2}$ pairs, it follows that $\binom{k}{2}$ divides $\frac{n(2q+n-3)}{2}$, i.e.,
$k(k-1)$ divides $n(n+2q-3)$.
\end{proof}

\begin{example}
Theorem~\ref{thm:necessry2,k} implies that the necessary conditions for the existence of \textup{MS}$(2,4,\Z_2^n \times \Z_q)$ implies that
$n \equiv 0,~3$ or $9~(\mmod~12)$ and
\begin{enumerate}
\item If $n \equiv 0 ~(\mmod~12)$ then $q \equiv 2~(\mmod~3)$.

\item If $n \equiv 3 ~(\mmod~12)$ then $q \equiv 2~(\mmod~6)$.

\item If $n \equiv 9 ~(\mmod~12)$ then $q \equiv 5~(\mmod~6)$.
\end{enumerate}
\end{example}

A different type of a necessary condition is the
following lemma that generalizes a similar result on perfect mixed codes~\cite{EtGr93}.

\begin{lemma}
\label{lem:lower_bound_length}
If $\cS$ is an \textup{MS}$(t,k,Q \times \Z_q)$, where the length of the codewords is $n$ and $q > 2$, then
$$
n \geq (k-t) q + 2t -k ~.
$$
\end{lemma}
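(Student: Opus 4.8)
The plan is to prove the equivalent (and more transparent) inequality $n \ge (k-t)(q-1)+t$, which is identical to $n \ge (k-t)q+2t-k$ upon expanding $(k-t)(q-1)=(k-t)q-k+t$. Written this way the bound suggests a direct packing argument around the $\Z_q$-coordinate, generalizing the proof of Theorem~\ref{thm:NS_t=1gk}: a fixed weight-$t$ pattern occupies $t$ coordinates, and each of the $q-1$ nonzero symbols of $\Z_q$ is forced to contribute its own block of $k-t$ fresh coordinates.

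Concretely, I would first fix any weight-$(t-1)$ partial word $u$ supported on $t-1$ coordinates of $Q$; this is possible since a weight-$k$ codeword exists, forcing $n\ge k\ge t$ and hence giving $Q$ at least $t-1$ coordinates. For each $\alpha\in\Z_q\setminus\{0\}$, let $w_\alpha$ be the weight-$t$ word obtained from $u$ by placing $\alpha$ in the $\Z_q$-coordinate. By the defining property of \textup{MS}$(t,k,Q\times\Z_q)$, each $w_\alpha$ is covered by a unique codeword $c_\alpha$, i.e. $d(w_\alpha,c_\alpha)=k-t$ and hence $\mathrm{supp}(w_\alpha)\subseteq\mathrm{supp}(c_\alpha)$ with matching values; in particular $c_\alpha$ carries $\alpha$ in the $\Z_q$-coordinate, so the codewords $c_\alpha$ for $\alpha\in\Z_q\setminus\{0\}$ are pairwise distinct.

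The key step is a distance computation. For two weight-$k$ words $c,c'$ with supports $A,A'$, writing $j=\abs{A\cap A'}$ and letting $m$ be the number of common-support coordinates on which $c$ and $c'$ agree, one has $d(c,c')=2k-j-m$; since the minimum distance is $2(k-t)+1$, every pair of codewords satisfies $j+m\le 2t-1$. Applying this to $c_\alpha,c_\beta$ with $\alpha\ne\beta$: the $t-1$ coordinates of $u$ lie in both supports with equal values (contributing $t-1$ to both $j$ and $m$), while the $\Z_q$-coordinate lies in both supports but with the distinct values $\alpha\ne\beta$ (contributing $1$ to $j$ and $0$ to $m$). Thus $j\ge t$ and $m\ge t-1$, whence $j+m\ge 2t-1$; combined with $j+m\le 2t-1$ this forces equality, so $c_\alpha$ and $c_\beta$ share no support coordinate outside the $t$ fixed ones. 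Consequently the $k-t$ free coordinates of the various $c_\alpha$ are pairwise disjoint and disjoint from the $t$ fixed coordinates.

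Finally I would count coordinates: the $t-1$ coordinates of $u$, the single $\Z_q$-coordinate, and the $q-1$ pairwise-disjoint blocks of $k-t$ free coordinates together contribute $t+(q-1)(k-t)$ distinct coordinates, all of which must be present, giving $n\ge t+(q-1)(k-t)=(k-t)q+2t-k$. I expect the only delicate point to be the bookkeeping in the identity $d(c,c')=2k-j-m$ and the verification that the $\Z_q$-coordinate contributes to $j$ but not to $m$; once that is settled, the forced disjointness and the count are immediate. The hypothesis $q>2$ serves only to make the bound nontrivial, since for $q=2$ the same argument yields merely the vacuous $n\ge k$.
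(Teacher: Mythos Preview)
Your proof is correct and follows essentially the same approach as the paper: fix a weight-$(t-1)$ pattern on the $Q$-coordinates, append each nonzero $\alpha\in\Z_q$ to obtain weight-$t$ words, and use the minimum-distance condition to force the $k-t$ ``free'' coordinates of the covering codewords to be pairwise disjoint, then count. The paper's proof is identical in structure but less explicit---it simply asserts the disjointness from the minimum distance---whereas you spell out the $d(c,c')=2k-j-m$ bookkeeping that justifies it.
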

\begin{proof}
Consider the word $(1^{t-1} 0^{n-t} \alpha)$, where $\alpha \in \Z_q \setminus \{ \bf0 \}$. This word of weight $t$ must be covered by a codeword
of weight $k$ that starts with $t-1$ \emph{ones}, ends with $\alpha$ and another $k-t$ nonzero entries in the middle.
Since the minimum distance of $\cS$ is $2(k-t)+1$, it follows that for two words $(1^{t-1} 0^{n-t} \alpha_1)$ and
$(1^{t-1} 0^{n-t} \alpha_2)$, where $\alpha_1 \neq \alpha_2$, $\alpha_1,\alpha_2 \in \Z_q \setminus \{ \bf0 \}$
which are covered by two codewords $c_1$ and $c_2$, respectively,
the $k-t$ nonzero entries in the middle of the two codewords are in disjoint coordinates.

Since $\alpha$ can be chosen in $q -1$ distinct ways, it follows that $n-t \geq (q -1)(k-t)$, i.e.,
$n\geq (k-t) q + 2t -k$.
\end{proof}

\section{Trivial Systems}
\label{sec:trivial}

It is relatively easy to identify trivial mixed Steiner systems. The first family of such systems consists of systems
that contain all the words in the space as follows.
\begin{lemma}
If $Q$ is a mixed alphabet on $n$ coordinates and $k$ is an integer, where $1 \leq k \leq n$, then there exists
a mixed Steiner system \textup{MS}$(k,k,Q)$.
\end{lemma}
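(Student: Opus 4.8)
The plan is to take $\cC$ to be the set of \emph{all} words of weight exactly $k$ over $Q$, and to verify that this (essentially forced) choice satisfies both the covering condition and the minimum-distance requirement of an MS$(k,k,Q)$.

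First I would dispense with the covering condition, which for $t=k$ is almost tautological. The definition requires that every word $x$ of weight $k$ be covered by exactly one codeword $c$ with $d(x,c)=k-t=0$. Since $d(x,c)=0$ forces $c=x$, a word $x$ is covered if and only if $x$ itself belongs to $\cC$, and in that case it is covered by itself and by nothing else. Hence the only way to satisfy the covering condition is to let $\cC$ consist of all weight-$k$ words, and with that definition each weight-$k$ word is covered exactly once, namely by itself. This also shows the construction is unique, so there is really nothing to choose.

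The one genuine point is the minimum distance, which must equal $2(k-k)+1=1$. Distinct codewords trivially lie at Hamming distance at least $1$, so it remains to exhibit two codewords at distance exactly $1$. This is precisely where the hypothesis that $Q$ is a \emph{mixed} alphabet is used: since not all $q_i$ are equal (and each $q_i\geq 2$), some coordinate $i$ has $q_i\geq 3$. Using $n\geq k\geq 1$, I would choose a $k$-subset $S$ of coordinates with $i\in S$, let $c_1$ be the word carrying the symbol $1$ in every coordinate of $S$, and let $c_2$ agree with $c_1$ except for carrying the symbol $2$ in coordinate $i$ (legal because $q_i\geq 3$). Both words have weight $k$, hence both lie in $\cC$, and $d(c_1,c_2)=1$, so the minimum distance is exactly $1$.

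I expect the minimum-distance step to be the only substantive obstacle, and it is an obstacle only in the sense that it is the place where the mixed structure is indispensable. It is exactly this feature that separates MS$(k,k,Q)$, with required distance $2(k-t)+1=1$, from the binary system S$(k,k,n)$, whose required distance is $2(k-t)+2=2$: in the purely binary case no two distinct weight-$k$ words can be at distance $1$, whereas the presence of a coordinate with alphabet size at least $3$ supplies such a pair. Everything else in the argument is immediate.
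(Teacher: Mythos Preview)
Your proof is correct and matches the paper's intended construction: the paper states this lemma without proof, prefacing it only with the remark that the system ``contains all the words in the space'' (i.e., all weight-$k$ words), and you have supplied exactly that construction together with a careful verification of both defining conditions. Your observation that the mixed hypothesis is what guarantees the minimum distance is \emph{exactly} $1$---via a coordinate with alphabet size at least $3$---is the one nontrivial point, and it is handled correctly.
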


The second family of trivial mixed Steiner systems MS$(t,k,Q)$ is associated with the parameter $t=1$.
The necessary condition on Theorem~\ref{thm:lastN} is also sufficient as follows.
The theorem also generalizes Theorem~\ref{thm:NS_t=1gk}.

\begin{theorem}
\label{lem:lastN}
If $Q = \Z_{q_1} \times \Z_{q_2} \times \cdots \times \Z_{q_{n-1}} \times \Z_{q_n}$, $q_i \leq q_j$ for $i < j$, not all the $q_i$'s
are equal, and $\sum_{i=1}^{n-1} (q_i -1) - (q_n -1)(k-1)$ is not negative and divisible by $k$, then
there exists a mixed Steiner system \textup{MS}$(1,k,Q)$ .
\end{theorem}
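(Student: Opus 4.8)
The plan is to translate the statement into a purely combinatorial packing problem and then extend the explicit construction used for Theorem~\ref{thm:NS_t=1gk}. First I would record the meaning of MS$(1,k,Q)$ in set language. A weight-$1$ word is a single point $(i,\alpha)$ with $\alpha \in \Z_{q_i}\setminus\{0\}$, and a weight-$k$ block covers it exactly when the point is one of the $k$ points of the block; hence the covering condition says precisely that the blocks partition the point set $P = \mathset{(i,\alpha) : 1 \leq i \leq n,\ \alpha \in \Z_{q_i}\setminus\{0\}}$ into $k$-subsets, each using $k$ distinct coordinates. A direct distance computation shows that two blocks $c_1,c_2$ satisfy $d(c_1,c_2) = 2k - 2\abs{A} - \abs{B}$, where $A$ is the set of coordinates on which they agree (both nonzero and equal) and $B$ the set on which they are both nonzero but differ; so the minimum distance equals $2k-1$ exactly when $\abs{A}=0$ and $\abs{B}\le 1$ for every pair, i.e.\ the blocks are point-disjoint (the partition condition) and any two blocks meet in at most one coordinate. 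Thus MS$(1,k,Q)$ is equivalent to a $k$-uniform \emph{linear} hypergraph on the coordinate set $[n]$ in which coordinate $i$ has degree exactly $q_i-1$, together with an assignment of the $q_i-1$ distinct nonzero symbols of $\Z_{q_i}$ to the blocks through $i$.

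With this reformulation the construction mirrors the proof of Theorem~\ref{thm:NS_t=1gk}. Because the alphabets are sorted, $\Z_{q_n}$ is a largest alphabet, and I would treat its $q_n-1$ points as the bottleneck. The first family of blocks assigns to each nonzero $\alpha_i\in\Z_{q_n}$ a block consisting of $(n,\alpha_i)$ together with $k-1$ points taken from coordinates $1,\dots,n-1$, chosen so that these $q_n-1$ blocks use pairwise disjoint sets of coordinates outside $n$; this disjointness is forced, since any two of them already share coordinate $n$ and so, by the linearity requirement derived above, may share no further coordinate. After these blocks are removed, exactly $\sum_{i=1}^{n-1}(q_i-1)-(q_n-1)(k-1)$ points remain in coordinates $1,\dots,n-1$, and by hypothesis this number is a nonnegative multiple of $k$. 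The second family of blocks partitions the remaining points into $k$-subsets on pairwise near-disjoint coordinate supports, which I would build either by iterating the same peeling step on the reduced alphabet profile or by a direct greedy assignment.

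I would then check the two defining properties. Covering holds because every point of $P$ is placed in exactly one block of one of the two families. For the distance I would verify $\abs{A}=0$ and $\abs{B}\le 1$ case by case: two first-family blocks meet only in coordinate $n$; a first-family and a second-family block are coordinate-disjoint (or meet in a single coordinate of $\{1,\dots,n-1\}$); and two second-family blocks meet in at most one coordinate by construction. This yields minimum distance exactly $2k-1$, so that the resulting code is a genuine MS$(1,k,Q)$ rather than an MS$_d$ with smaller $d$.

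The main obstacle is realizing the linear hypergraph with the prescribed degree sequence $q_1-1,\dots,q_n-1$, and in particular accommodating the $q_n-1$ blocks through the largest coordinate on pairwise disjoint coordinate supports. This is exactly the step where the quantitative hypothesis must be spent. I expect the delicate point to be that, once some of the lower coordinates carry more than one symbol, their degrees $q_i-1$ also compound and must be fitted simultaneously into the same packing, so that the naive ``one fresh coordinate per symbol'' assignment which sufficed in the all-binary case of Theorem~\ref{thm:NS_t=1gk} has to be replaced by an inductive argument on the alphabet profile (or an explicit resolvable/packing construction). Guaranteeing that this packing closes up exactly---using the divisibility of $\sum_{i=1}^{n-1}(q_i-1)-(q_n-1)(k-1)$ by $k$ to ensure that no stray points remain uncovered---is where I would concentrate the real work.
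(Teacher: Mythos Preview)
Your plan and the paper's are the same greedy/inductive construction. The paper argues by induction on $\sum_i (q_i-1)$: at each step it forms the block $\{(i,1): n-k+1\le i\le n\}$ on the $k$ currently largest alphabets, decrements those $q_i$, re-sorts, and recurses. The paper never addresses the minimum-distance requirement; it checks only that the points are partitioned into weight-$k$ words. Your linear-hypergraph reformulation is exactly the right way to encode $d\ge 2k-1$, and you are correct that the existence of a $k$-uniform \emph{linear} hypergraph with degree sequence $(q_1-1,\dots,q_n-1)$ is the real content.

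That step, however, cannot be completed from the stated hypothesis, so the gap you flag is genuine and is shared by the paper's argument. Your first family already asks for $(q_n-1)(k-1)$ pairwise disjoint coordinates in $\{1,\dots,n-1\}$, which does not follow from $\sum_{i<n}(q_i-1)\ge (q_n-1)(k-1)$ once several $q_i$ exceed $2$. The paper's peeling rule can fail even when a design exists: for $k=3$ and $Q=\Z_2^3\times\Z_3^3$ the first block is forced onto coordinates $\{4,5,6\}$, after which the two remaining blocks must partition six degree-$1$ coordinates, so one of them meets $\{4,5,6\}$ in at least two places and the distance drops to $3<5$. More decisively, the hypothesis is not sufficient for \emph{any} construction: with $k=2$ and $Q=\Z_3\times\Z_5\times\Z_5$ one has $\sum_{i=1}^{2}(q_i-1)-(q_3-1)(k-1)=2$, nonnegative and even, yet only $\binom{3}{2}=3$ coordinate pairs are available for the $5$ required blocks, so two blocks must share both coordinates and have distance $2<2k-1$. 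The ``real work'' you anticipated therefore cannot be carried out under the stated hypothesis; an additional assumption (or the weaker conclusion MS$_d(1,k,Q)$) is needed before either argument goes through.
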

\begin{proof}
The proof is by induction on $\sum_{i=1}^n (q_i -1)$. Since $\sum_{i=1}^{n-1} (q_i -1) - (q_n -1)(k-1)$ is not negative and divisible by $k$,
it follows that $\sum_{i=1}^n (q_i -1)$ is divisible by $k$.

The basis is $\sum_{i=1}^n (q_i -1)=k$. Since $\sum_{i=1}^{n-1} (q_i -1) - (q_n -1)(k-1)$
is not negative divisible by~$k$, it follows that each $q_i$, $1 \leq i \leq n$, equals to 2
and the system contains exactly one block that contains all the $q_i$'s (which implies that the mixed Steiner system is
a Steiner system S$(1,k,k)$).

For the induction hypothesis, assume that $Q' = \Z_{q'_1} \times \Z_{q'_2} \times \cdots \times \Z_{q'_{n'-1}} \times \Z_{q'_{n'}}$,
$q'_i \leq q'_j$ for $i < j$, not all the $q_i$'s are equal,
$\sum_{i=1}^{n'} (q'_i -1)=rk$, where $r \geq 1$, and there exists a mixed Steiner system MS$(1,k,Q')$.

For the induction step let $\sum_{i=1}^n (q_i -1)=(r+1)k$. We form a block $\{ (i,1) ~:~ n-k+1 \leq i \leq n\}$ which contains
the first nonzero elements of the $k$ largest alphabet sizes. We replace $\Z_{q_i}$ with~$\Z_{q_i -1}$, $n-k+1 \leq i \leq n$ (it can be done since we
used one nonzero element from each of these alphabets) and a new order the alphabet letters if required (this is required only if
$q_{n-k+1}=q_{n-k}$) and some of these alphabets might disappear (if some of the taken $q_i$'s for the block were equal to 2).
With the new alphabets we can use and finish with the induction hypothesis.
\end{proof}

\begin{corollary}
Assume $Q \times \Z_{q_n} = \Z_{q_1} \times \Z_{q_2} \times \cdots \times \Z_{q_{n-1}} \times \Z_{q_n}$, $q_i \leq q_j$
for $i < j$ and not all the $q_i$'s are equal.
There exists a mixed Steiner system \textup{MS}$(1,k,Q \times \Z_{q_n})$ if and only if
$\sum_{i=1}^{n-1} (q_i -1) - (q_n -1)(k-1)$ is not negative and divisible by $k$.
\end{corollary}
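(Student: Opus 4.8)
The plan is to observe that this corollary is nothing more than the conjunction of two results already established earlier in the excerpt: the necessity statement for systems whose distinguished coordinate is of largest alphabet size, and the matching sufficiency (construction). So I would not prove anything new, but instead split the biconditional into its two implications and dispatch each by citing the appropriate theorem, taking care only to align the notation.

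For the ``only if'' direction I would appeal to Theorem~\ref{thm:lastN}. Writing $Q=\Z_{q_1}\times\cdots\times\Z_{q_{n-1}}$ and reading the parameter denoted $q$ there as $q_n$, the existence of a mixed Steiner system \textup{MS}$(1,k,Q\times\Z_{q_n})$ is exactly the hypothesis of Theorem~\ref{thm:lastN}, whose conclusion is that $\sum_{i=1}^{n-1}(q_i-1)-(q_n-1)(k-1)$ is nonnegative and divisible by $k$. I would note that Theorem~\ref{thm:lastN} imposes neither the ordering $q_i\le q_j$ nor the requirement that the $q_i$ be not all equal, so it applies verbatim; the only thing to verify is the identification of the distinguished alphabet with $q_n$.

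For the ``if'' direction I would invoke Theorem~\ref{lem:lastN}. Under the standing hypotheses of the corollary, namely $q_i\le q_j$ for $i<j$, the $q_i$ not all equal, and $\sum_{i=1}^{n-1}(q_i-1)-(q_n-1)(k-1)$ nonnegative and divisible by $k$, that theorem produces a mixed Steiner system \textup{MS}$(1,k,Q\times\Z_{q_n})$. Every hypothesis required by Theorem~\ref{lem:lastN} is listed among the hypotheses of the corollary, so this implication follows immediately.

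The only genuine point of care---rather than a mathematical obstacle---is the bookkeeping: one must confirm that the distinguished coordinate $\Z_{q_n}$ singled out in the arithmetic condition is the coordinate of largest alphabet size, which is guaranteed by the ordering $q_i\le q_j$, so that the necessity of Theorem~\ref{thm:lastN} and the construction of Theorem~\ref{lem:lastN} refer to the same decomposition $Q\times\Z_{q_n}$ of the alphabet. Once the roles of $q$ and $q_n$ are matched, the two cited theorems close both directions and the proof is complete.
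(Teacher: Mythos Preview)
Your proposal is correct and matches the paper's approach: the corollary is stated there without proof, as an immediate consequence of Theorem~\ref{thm:lastN} (necessity) and Theorem~\ref{lem:lastN} (sufficiency), exactly the two results you cite. Your remark that Theorem~\ref{thm:lastN} does not require the distinguished coordinate to have the largest alphabet is accurate, so the notational alignment you describe is all that is needed.
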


\section{Systems from Mixed Perfect Codes}
\label{sec:perfect}

An \emph{$e$-perfect mixed code} $\cC$ is a code over a mixed alphabet $Q= \Z_{q_1} \times \Z_{q_2} \times \cdots \times \Z_{q_{n-1}} \times \Z_{q_n}$
and covering radius $e$ (minimum distance $2e+1$) is a code whose codewords have length $n$ over the alphabet $Q$. For each word $x \in Q$ there exists
a unique codeword $c \in \cC$ such that $d(x,c) \leq e$. It is well-known that over an alphabet $\Z_2^n$ in such a perfect code, that
contains the all-zero codeword, the codewords
of weight $2e+1$ form a Steiner system S$(e,2e+1,n)$.

Perfect mixed codes were considered first by Sch\"{o}nheim~\cite{Sch70} and later by~\cite{Hed77,HeSc71,HeSc72,Lin75}.
All the codes considered in these papers have covering radius 1. Such codes are associated with group partitions.
Such a group partition is also associated with perfect byte-correcting codes which was considered in~\cite{HoPa72} where it was generalized and
extensively studied in~\cite{Etz98}. Group partitions and perfect byte-correcting codes are also associated with spreads and partial
spreads. These partitions were studied in many papers, e.g.,~\cite{BESSSV08,EJSSS08,ESSSV07,ESSSV09,Hed09a,Hed09b,Kha09,SSSV12a,SSSV12b}.
Nonexistence of such codes were considered in a few papers, e.g.,~\cite{EtGr93,Hed75,PSS06,Wee91}. An extensive survey on perfect mixed codes can
be found in Chapter~7 of~\cite{Etz22}.

\begin{theorem}
\label{PER_ST}
The codewords of weight $2e+1$ of an $e$-perfect code $\cC$ (which contains the all-zero word), over a mixed alphabet
${Q=\Z_{q_1} \times \Z_{q_2} \times \cdots \times \Z_{q_n}}$, $q_i \leq q_j$ for $i < j$, form
a mixed Steiner system \textup{MS}$(e+1,2e+1,Q)$.
\end{theorem}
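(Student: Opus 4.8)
The plan is to identify the weight-$(2e+1)$ codewords of $\cC$ as the blocks of the claimed system and to show that the unique-covering property of MS$(e+1,2e+1,Q)$ is simply a restatement of the $e$-perfectness of $\cC$. Write $\cS$ for the set of weight-$(2e+1)$ codewords of $\cC$, and note that with $t=e+1$ and $k=2e+1$ one has $k-t=e$ and $2(k-t)+1=2e+1$, so the covering distance and minimum distance demanded by the definition coincide with the covering radius and minimum distance of $\cC$.

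First I would record two elementary weight facts. Since $\cC$ contains the all-zero word and has minimum distance $2e+1$, every nonzero codeword has weight at least $2e+1$. Also, as Hamming distance is a metric with $\mathrm{wt}(y)=d(y,\mathbf{0})$, for any word $x$ and codeword $c$ the reverse triangle inequality gives $\mathrm{wt}(c)\le \mathrm{wt}(x)+d(x,c)$ and $\mathrm{wt}(c)\ge \mathrm{wt}(x)-d(x,c)$.

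Now fix a word $x$ of weight $e+1$. By $e$-perfectness there is a unique codeword $c\in\cC$ with $d(x,c)\le e$. Since $d(x,\mathbf{0})=\mathrm{wt}(x)=e+1>e$, this $c$ is nonzero, whence $\mathrm{wt}(c)\ge 2e+1$; on the other hand $\mathrm{wt}(c)\le \mathrm{wt}(x)+d(x,c)\le (e+1)+e=2e+1$. Therefore $\mathrm{wt}(c)=2e+1$, i.e. $c\in\cS$, and then $d(x,c)\ge \mathrm{wt}(c)-\mathrm{wt}(x)=e$ forces $d(x,c)=e=k-t$, so $c$ covers $x$. This step—using the all-zero word together with the distance bound to pin the weight down to exactly $2e+1$—is the crux of the argument; the rest is bookkeeping. (Unwinding the equality $d(x,c)=\mathrm{wt}(c)-\mathrm{wt}(x)$ coordinate by coordinate also yields $\mathrm{supp}(x)\subseteq\mathrm{supp}(c)$ with $x$ and $c$ agreeing on $\mathrm{supp}(x)$, the set-theoretic reading of covering.) For uniqueness, if $c'\in\cS$ also covers $x$, then $d(x,c')=k-t=e\le e$, so the uniqueness clause of $e$-perfectness forces $c'=c$.

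It remains to treat the minimum distance. Because $\cS\subseteq\cC$, the minimum distance of $\cS$ is at least that of $\cC$, namely $2e+1$, so it is never smaller than the value $2(k-t)+1$ appearing in the definition. The value $2e+1$ is actually attained exactly when two blocks share $e+1$ support coordinates and carry different letters in one of those shared coordinates, which becomes possible as soon as some alphabet $\Z_{q_i}$ has more than two symbols; in the pure-binary case $Q=\Z_2^n$ no two blocks can be nearer than $2e+2$ and $\cS$ is the classical Steiner system S$(e+1,2e+1,n)$. Combining the existence and uniqueness of a covering block for every weight-$(e+1)$ word with this minimum-distance bound shows that $\cS$ is a mixed Steiner system MS$(e+1,2e+1,Q)$.
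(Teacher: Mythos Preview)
Your proof is correct and follows essentially the same route as the paper's: the all-zero codeword together with minimum distance $2e+1$ forces every nonzero codeword to have weight at least $2e+1$, and then $e$-perfectness pins the unique covering codeword of any weight-$(e+1)$ word to weight exactly $2e+1$. Your triangle-inequality bookkeeping and your remarks on when the minimum distance $2e+1$ is actually attained are more explicit than the paper's version, which simply asserts the conclusion without separately checking the distance clause.
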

\begin{proof}
Clearly, the all-zero codeword covers all the words of weight at most~$e$, over $Q$, and no word of weight larger than~$e$.
Hence, since the code~$\cC$ contains the all-zero codeword and its minimum distance is $2e+1$,
it follows that it does not contain any codeword of weight between \emph{one} and $2e$.
Therefore, the words of weight~${e+1}$, over~$Q$, must be covered by codewords from $\cC$ of weight $2e+1$.
Each of these words of weight ${e+1}$ must be covered by exactly one codeword of weight~$2e+1$.
This implies that the codewords of weight $2e+1$ in $\cC$ form a mixed Steiner system MS$(e+1,2e+1,Q)$.
\end{proof}

Now, we can use Corollary~\ref{cor:necessary_MST} to obtain the following consequence.

\begin{corollary}
\label{cor:sum_prod}
If $\cC$ is an $e$-perfect mixed code of length $n$, over ${Q=\Z_{q_1} \times \Z_{q_2} \times \cdots \times \Z_{q_n}}$,
$q_i \leq q_j$ for $i < j$, then for each $t$, $1 \leq t \leq e+1$ and each
subset $X$ of~$[n]$ whose size is $n-e+t-1$, we have
$$
\sum_{\substack{Y \subseteq X \\ \abs{Y}=t}} \prod_{j \in Y} (q_j -1) \equiv 0 \left(\hspace{-0.1cm} \mmod ~ \binom{e+t}{t} \right).
$$
\end{corollary}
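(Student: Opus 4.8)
The plan is to combine the two earlier results directly: Theorem~\ref{PER_ST} extracts a mixed Steiner system from the perfect code, and Corollary~\ref{cor:necessary_MST} supplies the divisibility conditions for any mixed Steiner system. The entire content of the proof is then a reindexing of the conclusion of Corollary~\ref{cor:necessary_MST}.

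First I would invoke Theorem~\ref{PER_ST}: since $\cC$ is an $e$-perfect mixed code containing the all-zero word, its codewords of weight $2e+1$ form a mixed Steiner system \textup{MS}$(e+1,2e+1,Q)$ over the same alphabet $Q$. This is exactly the setting to which Corollary~\ref{cor:necessary_MST} applies, with the Steiner parameters $t=e+1$ and $k=2e+1$ in the notation of that corollary.

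Next I would write out the conclusion of Corollary~\ref{cor:necessary_MST} for these parameters: for every $i$ with $0 \leq i \leq e$ and every subset $X \subseteq [n]$ of size $n-i$,
$$
\sum_{\substack{Y \subseteq X \\ \abs{Y} = e+1-i}} \prod_{j \in Y}(q_j-1) \equiv 0 \left( \hspace{-0.1cm} \mmod ~ \binom{2e+1-i}{e+1-i} \right).
$$
The final step is the substitution $t = e+1-i$ (equivalently $i = e+1-t$). Under this change of index the range $0 \leq i \leq e$ becomes $1 \leq t \leq e+1$; the subset size $n-i$ becomes $n-e+t-1$; the summation index size $e+1-i$ becomes $t$; and the modulus becomes $\binom{2e+1-i}{e+1-i} = \binom{e+t}{t}$, using $2e+1-(e+1-t)=e+t$ and $e+1-(e+1-t)=t$. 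This reproduces the asserted congruence verbatim.

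There is no substantive obstacle: once the mixed Steiner system is produced via Theorem~\ref{PER_ST}, the statement is a literal rewriting of Corollary~\ref{cor:necessary_MST}. The only thing requiring care is the bookkeeping of the index substitution, in particular checking that the endpoints of the range and the two binomial arguments transform correctly so that the modulus simplifies precisely to $\binom{e+t}{t}$.
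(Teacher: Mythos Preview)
Your proposal is correct and follows exactly the route the paper indicates: it states the corollary immediately after Theorem~\ref{PER_ST} with the remark ``Now, we can use Corollary~\ref{cor:necessary_MST} to obtain the following consequence,'' i.e., apply Theorem~\ref{PER_ST} to obtain an \textup{MS}$(e+1,2e+1,Q)$ and then invoke Corollary~\ref{cor:necessary_MST} with the substitution $t\mapsto e+1-i$. Your reindexing check (range, subset size, and modulus $\binom{2e+1-i}{e+1-i}=\binom{e+t}{t}$) is accurate.
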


Most known $e$-perfect mixed codes are for $e=1$. There is only one known family of 2-perfect mixed codes~\cite{EtGr93}.
The codes in this family are of length $2^n +1$, where $n \geq 4$ is an even integer. This code is over $\Z_2^{2^n} \times Z_{2^{n-1}}$
and by Theorem~\ref{PER_ST} we have
\begin{theorem}
\label{thm:mixedPreparata}
There exists a mixed Steiner system \textup{MS}$(3,5, \Z_2^{2^n} \times Z_{2^{n-1}})$ for each even $n \geq 4$.
Such a mixed Steiner system attains the lower bound of Lemma~\ref{lem:lower_bound_length}.
\end{theorem}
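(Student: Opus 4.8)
The plan is to verify the two claims of Theorem~\ref{thm:mixedPreparata} separately: first that the mixed Steiner system exists, and second that it meets the bound of Lemma~\ref{lem:lower_bound_length} with equality. The existence is almost immediate once we invoke the machinery already assembled. We are handed, from~\cite{EtGr93}, a $2$-perfect mixed code of length $2^n+1$ over $\Z_2^{2^n} \times \Z_{2^{n-1}}$ for each even $n \geq 4$. We must check only that this code contains the all-zero word (or can be translated so that it does); since it is a perfect code arising from a group partition, a coset shift by any codeword is again a perfect code with the same parameters, so we may assume the all-zero word is present. Then apply Theorem~\ref{PER_ST} with $e=2$: the codewords of weight $2e+1 = 5$ form a mixed Steiner system \textup{MS}$(e+1,2e+1,Q) = \textup{MS}(3,5,\Z_2^{2^n} \times \Z_{2^{n-1}})$. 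This is the whole of the first claim.

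For the second claim, I would instantiate Lemma~\ref{lem:lower_bound_length} with the parameters at hand and compare. The lemma states that any \textup{MS}$(t,k,Q \times \Z_q)$ with $q > 2$ and codeword length $n'$ satisfies $n' \geq (k-t)q + 2t - k$. Here the ambient alphabet is $\Z_2^{2^n} \times \Z_{2^{n-1}}$, so the distinguished large coordinate has $q = 2^{n-1}$, the block size is $k = 5$, the covering parameter is $t = 3$, and the total length is $n' = 2^n + 1$. I would then simply substitute: $(k-t)q + 2t - k = (5-3) \cdot 2^{n-1} + 6 - 5 = 2 \cdot 2^{n-1} + 1 = 2^n + 1$, which equals $n'$ exactly. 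Hence the bound is attained, as claimed.

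The genuinely routine part here is the arithmetic substitution in the second paragraph, which is a one-line check. The only point requiring a moment's care is the hypothesis of Lemma~\ref{lem:lower_bound_length} that $q > 2$: since $n \geq 4$ we have $q = 2^{n-1} \geq 8 > 2$, so the lemma applies. I would flag this explicitly so the reader sees the even-$n$, $n \geq 4$ restriction is exactly what keeps us in the regime where both the code of~\cite{EtGr93} exists and the bound is meaningful.

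The main obstacle, such as it is, lies not in the calculation but in confirming the structural input: that the code of~\cite{EtGr93} genuinely has covering radius $e=2$ (equivalently minimum distance $5$) over the stated mixed alphabet, and that it may be taken to contain the all-zero word so that Theorem~\ref{PER_ST} applies verbatim. Once those facts are quoted from the source, the theorem follows by two direct appeals to results already proved in this section together with the displayed arithmetic identity $2 \cdot 2^{n-1} + 1 = 2^n + 1$.
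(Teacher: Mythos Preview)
Your proposal is correct and follows the paper's own route: the paper states the theorem immediately after quoting the $2$-perfect mixed code of length $2^n+1$ from~\cite{EtGr93} and invoking Theorem~\ref{PER_ST}, leaving the bound verification implicit. You have simply filled in the arithmetic $(k-t)q+2t-k = 2\cdot 2^{n-1}+1 = 2^n+1$ and the routine observation that one may translate the code to contain $\mathbf{0}$; the only quibble is that the $2$-perfect code of~\cite{EtGr93} is Preparata-like rather than literally ``arising from a group partition,'' but the translation argument needs only the translation-invariance of Hamming distance on $\Z_{q_1}\times\cdots\times\Z_{q_n}$, so the conclusion stands.
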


We continue with 1-mixed perfect codes based on partitions of the nonzero element of $\F_{q^m}$ into subsets, each one a subspace
if the zero element is added.

\begin{theorem}
\label{thm:mixed_partitions}
Let $\{\cS_1, \cS_2,\ldots,\cS_n \}$ be a partition of $\F^{q^m} \setminus \{ \bf0 \}$ into $n$ subsets such that
$\cT_i \triangleq \cS_i \cup \{ \bf0 \}$ is a subspace of dimension $k_i$ over $\F_q$. The code defined by
$$
\cC \triangleq \left\{ (c_1,c_2,\ldots,c_n) ~:~ c_i \in \cS_i \cup \{ 0 \},~ \sum_{i=1}^n c_i =0  \right\},
$$
where the sum is performed in $\F_{q^m}$,
is a 1-perfect mixed code over ${\cT_1 \times \cT_2  \times \cdots \times \cT_n}$ which is
isomorphic to $\F_{q^{k_1}} \times \F_{q^{k_2}} \times \dots \times \F_{q^{k_n}}$.
\end{theorem}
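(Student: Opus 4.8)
The plan is to view the entire construction through the additive group homomorphism
$\phi \colon \cT_1 \times \cdots \times \cT_n \to \F_{q^m}$ defined by
$\phi(x_1,\ldots,x_n) = \sum_{i=1}^n x_i$ (sum in $\F_{q^m}$), whose kernel is exactly $\cC$.
First I would record that the ambient space $G \triangleq \cT_1 \times \cdots \times \cT_n$ is an abelian group under coordinatewise addition, since each $\cT_i$ is an $\F_q$-subspace of $\F_{q^m}$ and hence an additive subgroup. Because $\cT_i$ has dimension $k_i$ over $\F_q$, it is isomorphic as an $\F_q$-space (in particular as an additive group) to $\F_{q^{k_i}}$; assembling these coordinatewise bijections yields an isomorphism $G \cong \F_{q^{k_1}} \times \cdots \times \F_{q^{k_n}}$, which is the asserted one. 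I would note that this isomorphism preserves the Hamming metric, as it acts coordinatewise and sends $\bf0$ to $\bf0$, so it carries $\cC$ to an equivalent code over $\F_{q^{k_1}} \times \cdots \times \F_{q^{k_n}}$; this settles the isomorphism claim.

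Next I would establish the counting that forces perfection. Since $\phi$ is additive and its image $\sum_{i=1}^n \cT_i$ contains $\bigcup_{i=1}^n \cT_i = \F_{q^m}$ (the $\cS_i$ partition the nonzero elements and each $\cT_i$ contains $\bf0$), the map $\phi$ is surjective, so $\abs{\cC} = \abs{G}/q^m$. On the other hand, a Hamming ball of radius $1$ about any word has size $1 + \sum_{i=1}^n (q^{k_i}-1)$, and the partition hypothesis gives $\sum_{i=1}^n (q^{k_i}-1) = \abs{\F_{q^m} \setminus \{ \bf0 \}} = q^m - 1$, so every such ball has exactly $q^m$ points. Hence $\abs{\cC}$ times the ball size equals $\abs{G}$, i.e.\ the sphere-packing bound holds with equality. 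It then suffices to prove that the radius-$1$ balls about distinct codewords are pairwise disjoint, equivalently that the minimum distance of $\cC$ is at least $3$, since the covering property follows automatically by this counting.

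The crux is the minimum-distance argument, and it is the only place where the subspace-partition hypothesis is used essentially. Suppose $c, c' \in \cC$ are distinct with $d(c,c') \leq 2$. If they differ in a single coordinate $i$, then $0 = \phi(c) - \phi(c') = c_i - c'_i$ forces $c_i = c'_i$, a contradiction. If they differ in exactly two coordinates $i \neq j$, then $0 = \phi(c) - \phi(c') = (c_i - c'_i) + (c_j - c'_j)$, so $c_i - c'_i = -(c_j - c'_j)$; here $c_i - c'_i$ is a nonzero element of the subspace $\cT_i$ and hence lies in $\cS_i$, while $-(c_j - c'_j)$ is a nonzero element of the subspace $\cT_j$ and hence lies in $\cS_j$, contradicting $\cS_i \cap \cS_j = \emptyset$. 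Thus the minimum distance is at least $3$, and combined with the equality in the sphere-packing bound this shows that the radius-$1$ balls tile $G$: every word lies within distance $1$ of a unique codeword, so $\cC$ is a $1$-perfect mixed code over $\cT_1 \times \cdots \times \cT_n$. I expect no serious obstacle beyond organizing these observations; the one point to state carefully is that the equality in the sphere-packing bound is what upgrades ``minimum distance $\geq 3$'' to genuine $1$-perfection, so no separate covering-radius computation is needed.
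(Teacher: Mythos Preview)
Your proof is correct. The minimum-distance argument is essentially identical to the paper's (the paper first notes linearity and argues there is no codeword of weight~$1$ or~$2$ using $\cT_i \cap \cT_j = \{\mathbf{0}\}$, which is your two-coordinate contradiction in disguise). Where you diverge is in the covering step: the paper does not invoke the sphere-packing count but instead, for an arbitrary word $x$, explicitly produces the unique covering codeword by computing $s = \sum_i x_i$, locating the index $j$ with $s \in \cS_j$, and correcting $x_j \leftarrow x_j - s$; uniqueness then follows from the triangle inequality. Your route---show $\phi$ is surjective, count $\lvert \cC \rvert = \lvert G \rvert / q^m$, and observe that the ball volume is exactly $q^m$---is cleaner and makes the perfection a one-line consequence of packing plus cardinality, while the paper's explicit construction has the (minor) advantage of exhibiting the decoding map directly. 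You also address the isomorphism $G \cong \F_{q^{k_1}} \times \cdots \times \F_{q^{k_n}}$ and its metric-preserving nature, which the paper leaves implicit.
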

\begin{proof}
If $c$ and $c'$ are two codewords of $\cC$, then clearly $c+c'$ is also a codeword in $\cC$.
This implies that $\cC$ is a linear code and its minimum distance is the weight of the codeword
of minimum weight in the code. It is readily verified that there is no codeword of weight one in $\cC$. Therefore,
if $(x_1,x_2,\ldots,x_n) \in {\cT_1 \times \cT_2  \times \cdots \times \cT_n}$ has weight two, then assume w.l.o.g. that
$x_i \neq 0$ and $x_j \neq 0$, for some $1 \leq i < j \leq n$. Since $x_i \in \cT_i$, $x_j \in \cT_j$, $\cT_i  \cap \cT_j = \{ \bf0 \}$, and
$\cT_i$, $\cT_j$ are subspaces of $\F_{q^m}$, it follows that $x_i + x_j \neq 0$ and hence $\cC$ does not have a
codeword of weight two. This implies that the minimum distance of $\cC$ is three.

To complete the proof of the claim of the theorem, it suffices to show that for each word
$x=(x_1,x_2,\dots,x_n) \in \cT_1 \times \cT_2 \times \dots \times \cT_n$, there exists
exactly one codeword $c=(c_1,c_2,\ldots,c_n)$ in $\cC$ such that $d(x,c) \leq 1$. Let $s = \sum_{i=1}^n x_i$. If $s=0$,
then, clearly, $x$ is a codeword.  If $s = \alpha \neq 0$, then
let $j$ be the unique integer such that $\alpha \in \cS_j$.
Clearly, since $\cS_j \cup \{ \bf0 \}$ is a subspace and also $x_i \in \cT_i$, for each $1 \leq i \leq n$,
it follows that $x_j - \alpha \in \cS_j$. Define
$c \triangleq (c_1,c_2,\dots, c_n)$, where $c_i=x_i$ for $i \neq j$ and $c_j=x_j - \alpha$. This implies that
$$
\sum_{i=1}^n c_i = \sum_{i=1}^n x_i - \alpha = s - \alpha =0
$$
and hence $c \in \cC$ and $d(x,c) = 1$.

Assume now that there exists two distinct codewords $c,c' \in \cC$ such that $d(x,c) \leq 1$ and $d(x,c') \leq 1$.
By the triangle equality we have that
$$
d(c,c') \leq d(c,x) + d(x,c') \leq 2,
$$
a contradiction to the minimum distance of $\cC$.

Thus, $c$ is the unique codeword in $\cC$ such that ${d(x,c)=1}$.
\end{proof}

Theorem~\ref{thm:mixed_partitions} is generalized as follows.

\begin{theorem}
\label{thm:general_mixed}
Let $\{ \cG^*_1, \cG^*_2,\dots,\cG^*_n \}$ be a partition of $\cG \setminus \{ \bf0 \}$, where $\cG$ is an abelian group,
into $n$ nonempty subsets such that $\cG^*_i = \cG_i \setminus \{ \bf0 \}$ and $\cG_i$ is a subgroup of~$\cG$ whose size is $k_i$.
The code defined by
$$
\cC \triangleq \left\{ (c_1,c_2,\dots,c_n) ~:~ c_i \in \cG_i ,~ \sum_{i=1}^n c_i =0  \right\},
$$
where the sum is performed in $\cG$,
is a 1-perfect mixed code over ${\cG_1 \times \cG_2 \times \dots \times \cG_n}$.
\end{theorem}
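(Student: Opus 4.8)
The plan is to mimic the structure of the proof of Theorem~\ref{thm:mixed_partitions}, which is the special case where $\cG=(\F_{q^m},+)$ and the subgroups $\cG_i=\cT_i$ are $\F_q$-subspaces. The key observation is that the earlier proof never actually used the field structure of $\F_{q^m}$ or the $\F_q$-linearity of the subspaces: every step used only the abelian group operation and the defining properties of subgroups. So I would essentially transcribe that argument, replacing ``subspace'' by ``subgroup'' and ``$\F_{q^m}$'' by $\cG$ throughout, and check that each step survives.

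First I would establish that $\cC$ is a subgroup of $\cG_1\times\cdots\times\cG_n$ (hence closed under the group operation), since the defining condition $\sum_i c_i=0$ is preserved under coordinatewise addition in the abelian group. From this it follows that the minimum distance of $\cC$ equals the minimum nonzero weight of a codeword. Next I would rule out weights one and two. A weight-one word cannot satisfy $\sum_i c_i=0$ because a single nonzero group element is never zero. For weight two, suppose a codeword has exactly two nonzero coordinates $x_i\in\cG_i\setminus\{\bf0\}$ and $x_j\in\cG_j\setminus\{\bf0\}$ with $i\neq j$; then $x_i+x_j=0$ forces $x_i=-x_j$, so $x_j=-x_i\in\cG_i$ as well (subgroups are closed under inverses), contradicting $\cG_i\cap\cG_j=\{\bf0\}$, which holds because the $\cG^*_i$ partition $\cG\setminus\{\bf0\}$. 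This gives minimum distance three.

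For the covering argument I would fix an arbitrary word $x=(x_1,\dots,x_n)\in\cG_1\times\cdots\times\cG_n$, set $s=\sum_i x_i$, and split into $s={\bf0}$ (so $x$ is already a codeword at distance zero) and $s=\alpha\neq{\bf0}$. In the latter case let $j$ be the unique index with $\alpha\in\cG^*_j$, and define $c$ by $c_i=x_i$ for $i\neq j$ and $c_j=x_j-\alpha$. Since $x_j,\alpha\in\cG_j$ and $\cG_j$ is a subgroup, $c_j\in\cG_j$, and one checks $\sum_i c_i=s-\alpha={\bf0}$, so $c\in\cC$ with $d(x,c)\le 1$. Uniqueness follows exactly as before from the triangle inequality: two codewords at distance at most one from $x$ would be at distance at most two from each other, contradicting minimum distance three.

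I do not expect any genuine obstacle, since the generalization is a matter of verifying that the field-free steps go through; the only point requiring mild care is the weight-two argument, where in the field proof one writes ``$x_i+x_j\neq0$'' relying on both being in complementary subspaces, and here I must phrase the closure-under-inverses step so that it uses only $\cG_i\cap\cG_j=\{\bf0\}$ together with $\cG_i$ being a subgroup (not a subspace). As a final remark I might note that abelian-ness is used to guarantee $\cC$ is a subgroup and that coordinatewise cancellation behaves as expected; the argument is otherwise identical to that of Theorem~\ref{thm:mixed_partitions}.
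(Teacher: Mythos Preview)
Your proposal is correct and is exactly the approach the paper intends: the paper states Theorem~\ref{thm:general_mixed} as a direct generalization of Theorem~\ref{thm:mixed_partitions} and gives no separate proof, so the expected argument is precisely the transcription you describe, with the weight-two step handled via closure of subgroups under inverses together with $\cG_i\cap\cG_j=\{\bf0\}$. Your handling of that step and of the covering argument (noting $x_j-\alpha\in\cG_j$ because $\cG_j$ is a subgroup) is the only place where any adaptation is needed, and you have it right.
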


Now, we give two examples of codes constructed by Theorem~\ref{thm:mixed_partitions}. We concentrate in cases where all the coordinates
are over the binary alphabet, except for one or two.

\begin{example}
\label{exm:Parts_STS}
Consider the space $\F_{2^n}$ and a subspace $\F_{2^k}$ of $\F_{2^n}$, $2 \leq k \leq n-1$.
The partition we form is to the nonzero elements of $\F_{2^k}$ and the rest, $2^n - 2^k$ elements,
of the nonzero elements of~$\F_{2^n}$ each one as a singleton.
Using Theorem~\ref{thm:mixed_partitions} we form a 1-perfect mixed code over $\F_2^{2^n -2^k} \times \F_{2^k}$. The codewords of weight three in this
code are a mixed Steiner system \textup{MS}$(2,3,\Z_2^{2^n - 2^k} \times \Z_{2^k})$.
\end{example}

\begin{example}
Consider the space $\F_{2^n}$ and two arbitrary disjoint subspaces $\F_{2^k}$ and $\F_{2^r}$ of $\F_{2^n}$,
where $2 \leq k \leq r \leq n-2$ and $k+r \leq n$.
The partition we form is to the nonzero elements of $\F_2^k$, the nonzero elements of $\F_{2^r}$,
and the rest, $2^n - 2^k -2^r +1$ elements, of the nonzero elements of $\F_2^n$ each one as a singleton.
Using Theorem~\ref{thm:mixed_partitions} we form a 1-perfect mixed code over $\F_2^{2^n-2^k-2^r+1} \times \F_{2^r} \times \F_{2^k}$.
The codewords of weight three in this code are a mixed Steiner system \textup{MS}$(2,3,\Z_2^{2^n-2^k-2^r+1} \times \Z_{2^r} \times \Z_{2^k})$.
\end{example}

\section{Large Sets and Resolutions}
\label{sec:largesets}

A \emph{large set} of Steiner systems S$(t,k,n)$, on an $n$-set $Q$, is a partition of all $k$-subsets
of $Q$ into Steiner systems S$(t,k,n)$. If we restrict ourself to Steiner systems S$(t-1,t,n)$, then exactly
two families of large sets are completely solved. A Steiner system S$(1,2,n)$ exists if and only if $n$ is even and its
large set is known as a one-factorization of the complete graph $K_n$. The existence of such one-factorizations
is a folklore and a survey can be found in~\cite{Wal97}. A Steiner system S$(2,3,n)$ is known as a Steiner triple
system and the corresponding large set is known to exist for every $n \equiv 1$ or $3 ~(\text{mod}~6)$, where $n > 7$.
It was first proved by Lu~\cite{Lu83,Lu84}, who left six open cases which were solved by Teirlinck~\cite{Tei91}.
An alternative shorter proof was given later by Ji~\cite{Ji05}. Unfortunately, there is no known construction for large sets of Steiner quadruple
systems and the one that gets close to it was constructed in~\cite{EtHa91}.
Large sets is a special case of resolvable designs~\cite{HRW72}. A Steiner system S$(t+1,k,n)$ is \emph{$t$-resolvable} if the blocks of the system can be
partitioned into subsets, each one is a Steiner system S$(t,k,n)$. The most remarkable result on resolvable Steiner systems
and large set was done by Keevash~\cite{Kee18}, but the proofs for their existence is probabilistic and no resolvable system
or a large set was constructed. The construction from large sets and resolutions will
be described in Section~\ref{sec:constructLS}. In Section~\ref{sec:nonexist} we prove that there are no large sets of mixed Steiner systems.

\subsection{Construction from Resolutions Large Sets}
\label{sec:constructLS}

\begin{construction}
\label{const:fromLS}
Let $\cS=(\Z_n,B)$ be a Steiner system \textup{S}$(t,k,n)$.
Let $\{ \cT_1, \cT_2,\ldots , \cT_r \}$ be a partition of $\cS$
into $r$ Steiner systems, where each $\cT_i$ is a Steiner system \textup{S}$(t-1,k,n)$.
Let $\cS'$ be the system on $\Z_2^n \times \Z_{r+1}$ whose blocks are
$$
\{ \{x_1,x_2,\ldots,x_k,(n+1,j)\} \}, ~~ \text{where} ~~ \{x_1,x_2,\ldots,x_k\} \in \cT_j ~.
$$
\end{construction}

\begin{theorem}
\label{thm:fromLS}
The system $\cS'$ generated by Construction~\ref{const:fromLS} is an \textup{MS}$(t,k+1,\Z_2^n \times \Z_{r+1})$.
\end{theorem}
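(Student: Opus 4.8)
The plan is to check directly the two defining requirements of a mixed Steiner system for $\cS'$: that every word of weight $t$ over $\Z_2^n\times\Z_{r+1}$ is covered by exactly one codeword (i.e. lies at distance $(k+1)-t$ from exactly one block), and that the minimum distance of $\cS'$ equals $2((k+1)-t)+1=2(k-t)+3$. Throughout I would use that every codeword of $\cS'$ has the form $c=\{B,(n+1,j)\}$ with $B\in\cT_j$, hence has weight $k+1$; consequently any word $x$ of weight $t$ satisfies $d(x,c)\ge (k+1)-t$, with equality precisely when the support of $x$ is contained in that of $c$ and the two agree on that support.

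I would establish the covering property by splitting a weight-$t$ word $x$ according to its last coordinate. If that coordinate is $0$, then $x$ is the indicator of a $t$-subset $T\subseteq[n]$, and $d(x,c)=(k+1)-t$ forces $T\subseteq B$; since $\cS$ is an \textup{S}$(t,k,n)$ there is a unique block $B\supseteq T$, lying in a unique class $\cT_j$, giving exactly one covering codeword. If instead the last coordinate equals a nonzero symbol $\alpha\in\{1,\dots,r\}$, then $x$ consists of a $(t-1)$-subset $T'\subseteq[n]$ together with $\alpha$ in position $n+1$, and equality now forces both $T'\subseteq B$ and $j=\alpha$; since the class $\cT_\alpha$ is an \textup{S}$(t-1,k,n)$, there is a unique block of $\cT_\alpha$ containing $T'$, again a single covering codeword. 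This exhausts all weight-$t$ words.

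Next I would bound the distance between two distinct codewords $c=\{B,(n+1,j)\}$ and $c'=\{B',(n+1,j')\}$. The first $n$ coordinates contribute $|B\triangle B'|=2k-2|B\cap B'|$, and position $n+1$ contributes $1$ exactly when $j\ne j'$. If $j=j'$, then $B$ and $B'$ are distinct blocks of the same class $\cT_j$, an \textup{S}$(t-1,k,n)$, so $|B\cap B'|\le t-2$ and $d(c,c')\ge 2(k-t)+4$. If $j\ne j'$, then $B$ and $B'$ are distinct blocks of the \textup{S}$(t,k,n)$ $\cS$, so $|B\cap B'|\le t-1$ and $d(c,c')\ge 2(k-t)+3$. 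Hence the minimum distance of $\cS'$ is at least $2(k-t)+3$.

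The only step beyond routine bounding is to show this value is actually attained, so that the minimum distance is exactly $2(k-t)+3$ rather than larger; this is where I expect the real content to lie. Assuming $r\ge 2$ (the genuinely mixed case), I would fix any $(t-1)$-subset $T'$ together with the unique blocks $B_1\in\cT_1$ and $B_2\in\cT_2$ containing it. Because the classes partition the blocks of $\cS$, we have $B_1\ne B_2$, while both contain $T'$; being distinct blocks of an \textup{S}$(t,k,n)$ this forces $|B_1\cap B_2|=t-1$. The codewords $\{B_1,(n+1,1)\}$ and $\{B_2,(n+1,2)\}$ then lie at distance exactly $2(k-t)+3$, which together with the two covering cases shows that $\cS'$ is an \textup{MS}$(t,k+1,\Z_2^n\times\Z_{r+1})$.
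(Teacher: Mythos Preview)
Your proof is correct and follows essentially the same approach as the paper: the same case split on the last coordinate for the covering property, and the same $j=j'$ versus $j\ne j'$ split for the distance computation. You are in fact slightly more thorough than the paper, which simply asserts the distances $2k-2t+4$ and $2k-2t+3$ as equalities, whereas you separately verify that $2(k-t)+3$ is actually attained.
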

\begin{proof}
By the definition all the blocks of $\cS'$ are of size $k+1$. There are two types of blocks of size~$t$.
For a $t$-subset $\{ x_1,x_2,\ldots,x_t \} \subset \Z_n$ is contained exactly once in $\cS$ and hence it is contained
in exactly one block of $\cS'$. Now, consider the $t$-subset $\{ x_1,x_2,\ldots,x_{t-1}, (n+1,j) \} \subset \Z_2^n \times \Z_{r+1}$.
Since $\cS_j$ is a Steiner system S$(t-1,k,n)$ it contained $\{ x_1,x_2,\ldots,x_{t-1} \}$ exactly once and hence there
is exactly one block in $\cS'$ that contains $\{ x_1,x_2,\ldots,x_{t-1} , (n+1,j) \}$.

As for the minimum distance, consider the two blocks $x=\{ x_1,x_2,\ldots,x_k, (n+1,i) \}$ and $y=\{ y_1,y_2,\ldots,y_k, (n+1,j) \}$
of $\Z_2^n \times \Z_{r+1}$. If $i=j$ then $\{ x_1,x_2,\ldots,x_k \}$ and $\{ y_1,y_2,\ldots,y_k \}$ are two blocks
in $\cS_i$, a Steiner system S$(t-1,k,n)$ and hence $d(x,y)=2(k-(t-1))+2=2k-2t+4$.
If $i \neq j$ then $\{ x_1,x_2,\ldots,x_k \}$ and $\{ y_1,y_2,\ldots,y_k \}$ are two blocks
in $\cS$, a Steiner system S$(t,k,n)$ and hence $d(x,y)=2(k-t)+2+1=2k-2t+3$.
Thus, the minimum Hamming distance of $\cS'$ is $2(k-t)+3$ as required.
\end{proof}

Theorem~\ref{thm:fromLS} can be applied for various parameters and five examples follow. All the constructed mixed Steiner systems
in these examples attain the lower bound of~\ref{lem:lower_bound_length}.

\begin{example}
\label{exm:LS_STS}
A Steiner system \textup{S}$(1,2,n)$ exists for each even $n$ and its is equivalent to a one-factor of $K_n$ and a large set
is a one-factorization of $K_n$~\cite{Wal97}. Hence, by Construction~\ref{const:fromLS}
and Theorem~\ref{thm:fromLS} there exists a mixed Steiner system \textup{MS}$(2,3,\Z_2^n \times Z_n)$ for any even integer $n$.
Each such mixed Steiner system attains the lower bound of Lemma~\ref{lem:lower_bound_length}.
\end{example}

\begin{example}
A large set of Steiner system S$(2,3,n)$ exists if and only if $n \equiv 1$ or $3 ~(\mmod ~6)$, $n>7$.
Such a large set contains $n-2$ Steiner systems and hence by Construction~\ref{const:fromLS}
and Theorem~\ref{thm:fromLS} there exists a mixed Steiner system \textup{MS}$(3,4,\Z_2^n \times Z_{n-1})$ for
$n \equiv 1$ or $3 ~(\mmod ~6)$, $n >7$.
Each such mixed Steiner system attains the lower bound of Lemma~\ref{lem:lower_bound_length}.
\end{example}

\begin{example}
A Steiner system \textup{S}$(2,4,n)$ exists if and only if $n \equiv 1$ or $4 ~(\mmod ~12)$.
It was proved in~\cite{Ji12} that 2-resolvable
Steiner systems \textup{S}$(3,4,n)$ exists if and only if $n \equiv 4 ~(\mmod ~12)$.
Such a resolution contains $(n-2)/2$ Steiner systems \textup{S}$(2,4,n)$ and hence by Construction~\ref{const:fromLS}
and Theorem~\ref{thm:fromLS} there exists a mixed Steiner system \textup{MS}$(3,5,\Z_2^n \times Z_{n/2})$ for
$n \equiv 4 ~(\mmod ~12)$. The mixed Steiner of Theorem~\ref{thm:mixedPreparata} is a system with such parameters.
Each such mixed Steiner system attains the lower bound of Lemma~\ref{lem:lower_bound_length}.
\end{example}

\begin{example}
A Steiner system \textup{S}$(2,3,n)$ exists if and only if $n \equiv 1$ or $3 ~(\mmod ~6)$.
A 1-resolvable \textup{S}$(2,3,n)$ into Steiner systems \textup{S}$(1,3,n)$ if and only if $n \equiv 3~(\mmod~6)$~\cite{RaWi71}.
Such a resolution contains $(n-1)/2$ Steiner systems \textup{S}$(1,3,n)$ and hence by Construction~\ref{const:fromLS}
and Theorem~\ref{thm:fromLS} there exists a mixed Steiner system \textup{MS}$(2,4,\Z_2^n \times Z_{(n+1)/2})$ for
$n \equiv 3 ~(\mmod ~6)$.
Each such mixed Steiner system attains the lower bound of Lemma~\ref{lem:lower_bound_length}.
\end{example}

\begin{example}
A Steiner system \textup{S}$(2,4,n)$ exists if and only if $n \equiv 1$ or $4 ~(\mmod ~12)$.
A 1-resolvable \textup{S}$(2,4,n)$ into Steiner systems \textup{S}$(1,4,n)$ if and only if $n \equiv 4~(\mmod~12)$.
Such a resolution contains $(n-1)/3$ Steiner systems \textup{S}$(1,4,n)$ and hence by Construction~\ref{const:fromLS}
and Theorem~\ref{thm:fromLS} there exists a mixed Steiner system \textup{MS}$(2,5,\Z_2^n \times Z_{(n+2)/3})$ for
$n \equiv 4 ~(\mmod ~12)$.
Each such mixed Steiner system attains the lower bound of Lemma~\ref{lem:lower_bound_length}.
\end{example}

\subsection{The Nonexistence of Large Sets}
\label{sec:nonexist}

Construction~\ref{const:fromLS} can be used with large sets of mixed Steiner systems instead of Steiner systems.
Unfortunately, it cannot help to obtain new mixed Steiner systems since there are no large sets of mixed Steiner systems
which are not Steiner systems.
Simple arguments leads to a generalization of Lemma~\ref{lem:derived} to large sets.
\begin{lemma}
\label{lem:shortLS}
If there exists a large set of \textup{MS}$(t,k,Q \times \Z_q)$, then there exists a large set of \textup{MS}$(t-1,k-1,Q)$
\end{lemma}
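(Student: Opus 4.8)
The plan is to reuse the derived-design construction of Lemma~\ref{lem:derived} uniformly across the whole large set, using a single fixed nonzero symbol. Let $\{\cC^{(1)},\cC^{(2)},\ldots,\cC^{(m)}\}$ be a large set of \textup{MS}$(t,k,Q\times\Z_q)$, i.e., a partition of all weight-$k$ words over $Q\times\Z_q$ into systems \textup{MS}$(t,k,Q\times\Z_q)$. Fix any $\alpha\in\Z_q\setminus\{\mathbf{0}\}$ and, for each $j$, form the derived system
$$
\cC^{(j)}_\alpha\triangleq\{\, c ~:~ (c,(n+1,\alpha))\in\cC^{(j)} \,\}.
$$
By Lemma~\ref{lem:derived}, each $\cC^{(j)}_\alpha$ is a (nonempty) mixed Steiner system \textup{MS}$(t-1,k-1,Q)$, so every member of the family $\{\cC^{(1)}_\alpha,\ldots,\cC^{(m)}_\alpha\}$ is already of the correct type. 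The only thing left to verify is that this family partitions all weight-$(k-1)$ words over $Q$.

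The engine of that verification is the map $c\mapsto(c,(n+1,\alpha))$, which I would present as a bijection between the weight-$(k-1)$ words over $Q$ and the weight-$k$ words over $Q\times\Z_q$ whose last coordinate equals $\alpha$ (appending the nonzero symbol $\alpha$ raises the weight by exactly one). Covering follows at once: given an arbitrary weight-$(k-1)$ word $c$ over $Q$, the word $(c,(n+1,\alpha))$ has weight $k$, hence lies in exactly one system $\cC^{(j)}$ of the large set, and by definition $c\in\cC^{(j)}_\alpha$. Disjointness follows dually: if $c$ lay in both $\cC^{(i)}_\alpha$ and $\cC^{(j)}_\alpha$ with $i\neq j$, then $(c,(n+1,\alpha))$ would belong to both $\cC^{(i)}$ and $\cC^{(j)}$, contradicting that the original family is a partition. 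Thus each weight-$(k-1)$ word over $Q$ lies in exactly one $\cC^{(j)}_\alpha$, so $\{\cC^{(j)}_\alpha\}_{j=1}^{m}$ is a large set of \textup{MS}$(t-1,k-1,Q)$.

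There is no deep obstacle here; the argument is essentially a bookkeeping reduction to Lemma~\ref{lem:derived} together with the partition property. The two points that deserve care are that appending $\alpha$ preserves weight exactly, and that none of the derived systems degenerates to the empty set; the latter is already guaranteed by Lemma~\ref{lem:derived}, which asserts each $\cC^{(j)}_\alpha$ is a genuine \textup{MS}$(t-1,k-1,Q)$ and therefore must cover every weight-$(t-1)$ word over $Q$. A minor conceptual remark worth including is that the choice of $\alpha$ was arbitrary but fixed: each distinct nonzero $\alpha$ yields a (possibly different) large set of \textup{MS}$(t-1,k-1,Q)$, so the construction in fact produces $q-1$ such large sets.
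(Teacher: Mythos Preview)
Your proof is correct and is precisely the ``simple argument'' the paper alludes to: the paper does not spell out a proof of Lemma~\ref{lem:shortLS} but merely says it is a generalization of Lemma~\ref{lem:derived} to large sets, which is exactly what you carry out by fixing a nonzero $\alpha$ and showing that the derived systems $\cC^{(j)}_\alpha$ partition the weight-$(k-1)$ words over $Q$.
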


\begin{theorem}
There is no large set of \textup{MS}$(t,k,Q)$, unless $Q$ all the coordinates of $Q$ are over alphabets of the same size.
\end{theorem}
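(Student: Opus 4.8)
The plan is to reduce to the case $t=1$ by iterating the peeling Lemma~\ref{lem:shortLS} and then to force a contradiction by counting blocks through a point. I assume throughout that $k>t$; in the boundary case $k=t$ the system MS$(k,k,Q)$ must contain \emph{every} word of weight $k$, so the only ``large set'' is the trivial one-part partition, and the statement is to be read as excluding this degenerate situation. Writing $Q$ as a product and reordering coordinates freely, Lemma~\ref{lem:shortLS} turns a large set of MS$(t,k,Q)$ into a large set of MS$(t-1,k-1,Q_0)$ with $Q_0$ obtained by deleting any one chosen coordinate; applying it $t-1$ times gives a large set of MS$(1,k-t+1,Q')$, where $Q'$ is $Q$ with $t-1$ coordinates removed. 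The one thing to check is that the removals can be arranged to leave $Q'$ \emph{mixed}: fixing two coordinates of distinct alphabet sizes and deleting $t-1$ of the remaining $n-2$ coordinates works precisely when $t-1\le n-2$, which holds because $n\ge k\ge t+1$.

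So it suffices to treat $t=1$. Let $\{\cC^{(1)},\dots,\cC^{(L)}\}$ be a large set of MS$(1,k',Q')$ with $k'=k-t+1\ge 2$ and $Q'=\Z_{q_1}\times\cdots\times\Z_{q_{n'}}$ mixed; by definition it partitions \emph{all} words of weight $k'$. A block $c$ of weight $k'$ covers the weight-one word $(i,\alpha)$, i.e.\ satisfies $d((i,\alpha),c)=k'-1$, if and only if $c_i=\alpha$. Hence each individual system contains exactly one block through the point $(i,\alpha)$, and since the large set uses up every word of weight $k'$, the number of systems equals the total number of weight-$k'$ blocks through $(i,\alpha)$, a quantity independent of $\alpha$ that I denote $B_i$:
$$
L \;=\; B_i \;=\; \sum_{\substack{Y\subseteq[n']\setminus\{i\}\\ \abs{Y}=k'-1}}\ \prod_{j\in Y}(q_j-1).
$$

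It remains to observe that the equalities $L=B_i$ cannot hold simultaneously for all $i$ when $Q'$ is mixed. Setting $p_j=q_j-1\ge 1$, we have $B_i=e_{k'-1}\big(\{p_j\}_{j\ne i}\big)$, the degree-$(k'-1)$ elementary symmetric polynomial in the $p_j$ with $p_i$ omitted. Splitting the terms of $B_i$ and of $B_{i'}$ according to whether they use $p_{i'}$, respectively $p_i$, yields
$$
B_i-B_{i'} \;=\; (p_{i'}-p_i)\, e_{k'-2}\big(\{p_j\}_{j\ne i,i'}\big).
$$
Since $B_i=B_{i'}=L$ the left side is zero, while $e_{k'-2}$ is a sum of products of the strictly positive numbers $p_j$ over $n'-2\ge k'-2$ indices and is therefore strictly positive. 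Hence $p_i=p_{i'}$, i.e.\ $q_i=q_{i'}$; ranging over all pairs makes all alphabet sizes equal, contradicting that $Q'$ is mixed.

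I expect the reduction to be the only delicate step: one must confirm that the $t-1$ peelings can be chosen so that a mixed alphabet survives (this is exactly where $n\ge k>t$ enters) and that the degenerate case $k=t$ is set aside. After that the heart of the matter is merely the strict positivity of an elementary symmetric polynomial evaluated at positive arguments.
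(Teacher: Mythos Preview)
Your proof is correct and takes a genuinely different route from the paper's. Both arguments begin identically, invoking Lemma~\ref{lem:shortLS} repeatedly to reduce to $t=1$. From that point the paper fixes the largest alphabet $\Z_{q_n}$ and counts in two ways: it computes the size~$L$ of the large set first from the blocks that meet the last coordinate and then from the blocks that miss it, obtaining the equality
\[
k \sum_{\cR} \prod_{j=1}^{k} (q_{i_j}-1) \;=\; \Bigl(\sum_{\cR} \prod_{j=1}^{k-1} (q_{i_j}-1)\Bigr)\Bigl(\sum_{i=1}^{n-1}(q_i-1)-(q_n-1)(k-1)\Bigr),
\]
and then asserts that ``careful analysis'' shows this forces all $q_i$ equal. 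You instead count blocks through a single point $(i,\alpha)$: each system in the large set contributes exactly one such block, so $L=B_i=e_{k'-1}\bigl(\{p_j\}_{j\neq i}\bigr)$ for \emph{every} $i$, and the elementary identity $B_i-B_{i'}=(p_{i'}-p_i)\,e_{k'-2}\bigl(\{p_j\}_{j\neq i,i'}\bigr)$ together with strict positivity of $e_{k'-2}$ finishes the argument in one stroke. Your approach is shorter, fully explicit where the paper hand-waves, and symmetric in the coordinates rather than singling out $q_n$. You also close a gap the paper leaves open: the reduction step must be arranged so that the surviving alphabet $Q'$ is still mixed, and your remark that one can protect two coordinates of distinct alphabet sizes (using $n\ge k>t$, hence $n-2\ge t-1$) makes this precise.
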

\begin{proof}
By using $t-1$ iterations of Lemma~\ref{lem:shortLS} the existence of a large set of MS$(t,k,Q)$ implies the
existence of a large set of MS$(1,k-t+1,Q')$, where $Q'$ is obtained by removing any $t-1$ coordinates from $Q$.
Therefore, it is sufficient to prove that there is no large set of MS$(1,k,Q)$. W.l.o.g. we can assume that
$Q = \Z_{q_1} \times \Z_{q_2} \times \cdots \times \Z_{q_{n-1}} \times \Z_{q_n}$, where $q_i \leq q_j$ for $i < j$ and not all the $q_i$'s
are equal.

In MS$(1,k,Q)$ the number of codewords that contain a nonzero element from $\Z_{q_n}$ is $q_n -1$ and they cover
exactly $(q_n -1)(k-1)$ nonzero elements of $Q$. The total number of words of weight $k$, that contain a nonzero element of $\Z_{q_n}$, in $Q$, is
$$
(q_n -1) \sum_{\cR \subset Q} \prod_{j=1}^{k-1} (q_{i_j} -1) ,
$$
where $\cR = \{x_1,x_2,\ldots , x_{k-1}\}$, $x_j \in \Z_{q_{i_j}} \setminus \{ \bf0 \}$ for
$1 \leq i_1 < i_2 < \ldots < i_{k-1} \leq n-1$. There are $q_n -1$ nonzero elements in $\Z_{q_n}$ and
therefore the size of a large set should be
\begin{equation}
\label{eq:Nexist1}
\frac{(q_n -1) \sum_{\cR \subset Q} \prod_{j=1}^{k-1} (q_{i_j} -1)}{q_n -1} = \sum_{\cR \subset Q} \prod_{j=1}^{k-1} (q_{i_j} -1) ~.
\end{equation}

The number of nonzero elements in $Q$ that have to be covered with codewords that do not contain a nonzero element of $\Z_{q_n}$ is
$\sum_{i=1}^{n-1} (q_i -1) - (q_n -1)(k-1)$. Therefore, the number of codewords (whose size is $k$) that do not contain nonzero elements of $\Z_{q_n}$ is
$$
\frac{\sum_{i=1}^{n-1} (q_i -1) - (q_n -1)(k-1)}{k}~.
$$
The total number of words of weight $k$, that do not contain a nonzero element of $\Z_{q_n}$, in $Q$, is
$$
\sum_{\cR \subset Q} \prod_{j=1}^k (q_{i_j} -1),
$$
where $\cR = \{x_1,x_2,\ldots , x_k\}$, $x_j \in \Z_{q_{i_j}} \setminus \{ \bf0 \}$ for
$1 \leq i_1 < i_2 < \ldots < i_k \leq n-1$.
This implies that the large set has size
\begin{equation}
\label{eq:Nexist2}
\frac{k \sum_{\cR \subset Q} \prod_{j=1}^k (q_{i_j} -1)}{\sum_{i=1}^{n-1} (q_i -1) - (q_n -1)(k-1)}.
\end{equation}

The two expressions of Equations~(\ref{eq:Nexist1}) and~~(\ref{eq:Nexist2}) that we got for the
size of a large set must be equal and hence
$$
\frac{k \sum_{\cR \subset Q} \prod_{j=1}^k (q_{i_j} -1)}{\sum_{i=1}^{n-1} (q_i -1) - (q_n -1)(k-1)} = \sum_{\cR \subset Q} \prod_{j=1}^{k-1} (q_{i_j} -1)~
$$
or
$$
k \sum_{\cR \subset Q} \prod_{j=1}^k (q_{i_j} -1) = \left( \sum_{\cR \subset Q} \prod_{j=1}^{k-1} (q_{i_j} -1) \right)
\left(  \sum_{i=1}^{n-1} (q_i -1) - (q_n -1)(k-1) \right).
$$
Careful analysis of the required equality yields that the left side of the equation is larger that the right side, unless all the $q_i$'s
are equal in which case there is indeed equality.
\end{proof}

\section{Pairwise Balanced Mixed Systems}
\label{sec:pairwise}

In this section we concentrate on constructions for mixed Steiner systems MS$(2,k,Q)$.
Example~\ref{exm:LS_STS} based on Construction~\ref{const:fromLS} and Theorem~\ref{const:fromLS} implies
the existence of an MS$(2,3,\Z_2^n \times \Z_n)$ for all even~$n$.
Example~\ref{exm:Parts_STS} based on 1-perfect mixed codes implies the existence of
an MS$(2,3,\Z_2^{2^n - 2^k} \times \Z_{2^k})$ for all $n \geq 3$ and $2 \leq k \leq n-1$.
Two different constructions for mixed Steiner systems MS$(2,k,Q)$ will be presented.
The first one presented in Section~\ref{sec:OA_pairs} is based on orthogonal arrays.
The second one presented in Section~\ref{sec:OF_pairs} is based on covering all pairs by triples and
pairwise disjoint one-factors of the complete graph $K_n$.
This construction will be used only for constructions of mixed Steiner systems MS$(2,3,\Z_2^n \times \Z_q)$, where $q <n$.

\subsection{A Construction Based on Orthogonal Arrays}
\label{sec:OA_pairs}

A \emph{Latin square of order $k$} is a $k \times k$ array in which each row and each column is a permutation of a given $k$-set.
Two $k \times k$ Latin squares $\cA$ and $\cB$ are \emph{orthogonal} if all the ordered pairs $( \cA(i,j),\cB(i,j) )$, $1 \leq i,j \leq k$,
are distinct.
An \emph{orthogonal array} OA$(2,n,k)$ is a $k^2 \times n$ array $\cA$ over $\Z_k$ in which in any projection
of two columns from $\cA$ each ordered pair of $\Z_k$ appears exactly once
The existence problem of orthogonal array OA$(2,n,k)$ was extensively studied in the literature~\cite{HSS99}.
Such an array is equivalent to a set of $n-2$ mutially orthogonal Latin squares of order $k$ (see Chapter 2 of~\cite{Etz22}).

\begin{construction}
\label{const:fromOA}
Let $\cA$ be an orthogonal array \textup{OA}$(2,n,k)$ and let $t$ be an integer, $1 \leq r \leq n-1$.
Let $\cS$ be the system on $\Z_2^{rk} \times \Z_{k+1}^{n-r}$ whose blocks are
$$
\{ ik+1,ik+2,\ldots, ik+k\}, ~~ 0 \leq i \leq r-1
$$
$$
\{ j_1, k+j_2,\ldots,(r-1)k + j_r , (tk+1,j_{r+1}), (rk+2,j_{r+2}),\ldots, (rk+n-r,j_n)  \}, ~~~ (j_1,j_2,\ldots,j_n) \in \cA ~.
$$
\end{construction}

\begin{theorem}
\label{thm:MSfromOA}
If $r=n-1$ then the system $\cS$ of Construction~\ref{const:fromOA} is an \textup{MS}$(2,k,\Z_2^{(n-1)k} \times \Z_{k+1})$.
\end{theorem}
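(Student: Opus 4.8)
The plan is to verify the three defining properties of a mixed Steiner system directly from the structure that Construction~\ref{const:fromOA} imposes, drawing only on the strength-two, index-one property of the orthogonal array. First I would fix notation reflecting the geometry of the case $r=n-1$: the $(n-1)k$ binary coordinates split into $n-1$ consecutive groups $G_1,\dots,G_{n-1}$ of size $k$, with $G_\ell=\{(\ell-1)k+1,\dots,(\ell-1)k+k\}$, and there is a single coordinate $p=(n-1)k+1$ over $\Z_{k+1}$. The codewords come in two kinds: the $n-1$ \emph{group blocks} $G_\ell$ (all-one on a group), and, for each row $(j_1,\dots,j_n)\in\cA$, one \emph{row block} carrying a single $1$ in position $(\ell-1)k+j_\ell$ of each $G_\ell$ together with the symbol $j_n$ in coordinate $p$. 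I would take the alphabet of $\cA$ to be $\{1,\dots,k\}$, so these binary positions lie in $G_\ell$, and fix a bijection between this alphabet and the $k$ nonzero symbols of $\Z_{k+1}$ for the entries placed in coordinate $p$. A group block has weight $k$, while a row block has $n-1$ binary ones and one nonzero $\Z_{k+1}$-symbol, hence weight $n$; the two coincide precisely when $n=k$, i.e.\ $\cA$ is an OA$(2,k,k)$, which is the regime of the statement, and the ambient space is then $\Z_2^{k(k-1)}\times\Z_{k+1}$.

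Next I would show that every word of weight two is covered exactly once, splitting such words into three classes. A \emph{same-group} pair (two ones inside one $G_\ell$) is covered only by the group block $G_\ell$ and by no row block, since each row block meets $G_\ell$ in a single coordinate; this accounts for it exactly once. A \emph{cross-group} pair, with a $1$ at value $a$ in $G_\ell$ and at value $b$ in $G_m$ ($\ell\neq m$), is covered by a row block iff that row satisfies $j_\ell=a$ and $j_m=b$; by the OA property on columns $\ell,m$ there is exactly one such row, and no group block covers a cross-group pair. A \emph{mixed} pair, with a $1$ at value $a$ in $G_\ell$ and symbol $\alpha$ in coordinate $p$, is covered by the unique row with $j_\ell=a$ and $j_n$ equal to the preimage of $\alpha$, again by the OA property on columns $\ell,n$. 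Thus each weight-two word is covered exactly once; the count that each of the $k^2+k-1$ blocks covers $\binom{k}{2}$ pairs, matching the total number of pairs, serves as a consistency check.

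Finally I would compute the minimum distance and show it equals $2(k-2)+1=2k-3$. The only nontrivial input is the standard fact that two distinct rows of $\cA$ agree in at most one column (two agreements would repeat an ordered pair in a two-column projection). Two group blocks have disjoint supports and are at distance $2k$; a group block and a row block meet in exactly one binary coordinate and so are at distance $2k-2$. For two row blocks coming from rows $u\neq v$, each group with $u_\ell\neq v_\ell$ contributes $2$ to the distance and coordinate $p$ contributes $1$ iff $u_n\neq v_n$; writing $a$ for the number of agreeing groups and $b\in\{0,1\}$ for agreement at $p$, the distance equals $2(k-1)+1-2a-b=2k-1-2a-b$ with $a+b\le 1$. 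Hence row–row distances are $2k-1$, $2k-2$, or $2k-3$, the last attained (for instance by two rows sharing a first-coordinate value, which then necessarily disagree at $p$). Combining the three cases, the minimum distance is exactly $2k-3$, and together with weight $k$ and the covering property this establishes the system as an MS$(2,k,\Z_2^{(n-1)k}\times\Z_{k+1})$.

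The step I expect to require the most care is this last one: one must argue both that $2k-3$ is a lower bound (via the "at most one common column" property) and that it is genuinely attained, so the minimum distance is exactly, not merely at least, $2(k-2)+1$; and one must keep the identification of $\Z_k$ with $\Z_{k+1}\setminus\{\mathbf{0}\}$ consistent, so that coordinate $p$ contributes only $0$ or $1$ to distances and every nonzero $p$-symbol is in fact used in the covering argument.
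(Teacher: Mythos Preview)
Your approach matches the paper's: verify the covering property directly, then bound pairwise distances by a case split on the last coordinate using the strength-two property of the array. You are more thorough---separating group blocks from row blocks, handling all three block-pair types, and checking that $2k-3$ is actually attained---whereas the paper compresses everything into the dichotomy $\alpha=\beta$ versus $\alpha\neq\beta$. Your explicit restriction to $n=k$ is correct and indeed forced by the statement as written (row blocks carry weight $n$, group blocks weight $k$); the paper's own computation $d(x,y)=2(k-1)$ in the $\alpha=\beta$ case tacitly uses this as well, even though Corollary~\ref{cor:MSfromOA} and the ensuing example appear to intend the result for general~$n$.
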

\begin{proof}
It is easily verified that each 2-subset of $\Z_2^{(n-1)k} \times \Z_{k+1}$ is contained in exactly one codeword ($k$-subset).
Consider the two codewords $x=(x_1,x_2,\ldots,x_{(n-1)k},\alpha)$ and $y=(y_1,y_2,\ldots,y_{(n-1)k},\beta)$.
If $\alpha = \beta$, then $x_i =1$ implies that $y_i=0$ and $y_i =1$ implies that $x_i=0$. As a consequence $d(x,y) = 2(k-1) > 2(k-2)+1$.
If $\alpha \neq \beta$, then for at most one $i$, $1 \leq i \leq (n-1)k$ we have $x_i=y_i=1$ and hence
$d(x,y) \geq 2(k-2)+1$. Thus, $\cS$ is an \textup{MS}$(2,k,\Z_2^{(n-1)k} \times \Z_{k+1})$.
\end{proof}

\begin{corollary}
\label{cor:MSfromOA}
If there exists an orthogonal array \textup{OA}$(2,n,k)$ then there exists a mixed Steiner system
\textup{MS}$(2,k,\Z_2^{(n-1)k} \times \Z_{k+1})$.
\end{corollary}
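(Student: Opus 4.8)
The plan is to observe that Corollary~\ref{cor:MSfromOA} is an immediate restatement of Theorem~\ref{thm:MSfromOA}, so the entire content is to verify that Construction~\ref{const:fromOA}, applied with $r=n-1$, produces a valid mixed Steiner system from a given orthogonal array $\textup{OA}(2,n,k)$. Since Theorem~\ref{thm:MSfromOA} already asserts exactly this, the corollary follows simply by noting that the hypothesis "there exists an $\textup{OA}(2,n,k)$" is precisely what is needed to invoke the construction. First I would restate that $\cS$ has two families of blocks: the $n-1$ "binary" blocks $\{ik+1,\dots,ik+k\}$ for $0 \le i \le n-2$, each consisting of $k$ consecutive coordinates with ones; and, for each row $(j_1,\dots,j_n) \in \cA$, one "mixed" block that places a single one in each of the $n-1$ binary segments (at the position selected by the first $n-1$ entries of the row) together with the nonzero symbol $j_n \in \Z_{k+1}$ in the last coordinate.

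The two things to check are coverage of every $2$-subset exactly once, and the minimum distance. For coverage, I would split the pairs of $\Z_2^{(n-1)k} \times \Z_{k+1}$ into three cases: (i) a pair of ones lying in the same binary segment is covered by the corresponding binary block and, because the orthogonal array has distinct entries within each pair of columns, by no mixed block; (ii) a pair of ones lying in two different binary segments corresponds to fixing the entries in two of the first $n-1$ columns of $\cA$, and the defining property of an $\textup{OA}(2,n,k)$ guarantees that exactly one row has those two prescribed entries, hence exactly one mixed block covers it; (iii) a pair consisting of a one in a binary segment together with the nonzero $\Z_{k+1}$-symbol corresponds to fixing one entry in one of the first $n-1$ columns and the entry in the last column, again covered exactly once by the orthogonal-array property. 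The minimum-distance verification is exactly the argument already written in the proof of Theorem~\ref{thm:MSfromOA}: two blocks with equal last symbol are disjoint on their binary supports giving distance $2(k-1)$, while two blocks with distinct last symbols share at most one binary position (once more by orthogonality) giving distance at least $2(k-2)+1$.

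The only genuinely delicate step is confirming that in case (ii) and case (iii) the relevant pair of columns of $\cA$ really does receive every ordered pair exactly once, which is the definition of $\textup{OA}(2,n,k)$ and is where the orthogonal-array hypothesis is used; everything else is bookkeeping on the two block families. Since all of this is subsumed by Theorem~\ref{thm:MSfromOA}, the corollary itself admits a one-line proof: given an $\textup{OA}(2,n,k)$, apply Construction~\ref{const:fromOA} with $r=n-1$ and invoke Theorem~\ref{thm:MSfromOA}. I expect no real obstacle here; the corollary is a packaging of the theorem into an existence statement, and the main interest lies entirely in the combinatorial translation between the orthogonal-array axiom and the pair-covering requirement, which Theorem~\ref{thm:MSfromOA} has already carried out.
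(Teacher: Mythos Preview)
Your proposal is correct and matches the paper's approach exactly: the corollary is stated immediately after Theorem~\ref{thm:MSfromOA} with no separate proof, because it is just the existence reformulation obtained by feeding an \textup{OA}$(2,n,k)$ into Construction~\ref{const:fromOA} with $r=n-1$. Your expanded case analysis of the pair coverage is more detailed than what the paper provides for Theorem~\ref{thm:MSfromOA} (which simply asserts it is ``easily verified''), but the underlying argument is the same.
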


\begin{theorem}
\label{thm:fromOA}
If $1 \leq r < n-1$, then
the system $\cS$ of Construction~\ref{const:fromOA} is an \textup{MS}$_d(2,k,\Z_2^{rk} \times \Z_{k+1}^{n-r})$,
where $d=n+r-2$.
\end{theorem}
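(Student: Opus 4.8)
The plan is to separate the two claims implicit in the statement---that $\cS$ is a genuine mixed Steiner system for $t=2$, so that every weight-$2$ word is covered exactly once, and that its minimum Hamming distance equals $d=n+r-2$---and to observe that only the second is affected by taking $r<n-1$.

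First I would dispose of the covering property. It follows from the same strength-$2$ counting declared routine in Theorem~\ref{thm:MSfromOA}, with one additional case because the $\Z_{k+1}$ part now has $n-r>1$ coordinates. A weight-$2$ word whose two nonzero positions lie in a single binary group $\{ik+1,\dots,ik+k\}$ is covered by the first-type block of that group and by no second-type block, since every row contributes at most one $1$ to each binary group. Any other weight-$2$ word---two positions in distinct binary groups, one binary position together with a $\Z_{k+1}$ coordinate, or two $\Z_{k+1}$ coordinates---prescribes the entries of $\cA$ in exactly two columns, and the strength-$2$ property supplies exactly one row $(j_1,\dots,j_n)\in\cA$ realizing that pair, hence exactly one covering block. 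So every pair is covered once and $\cS$ is a mixed Steiner system MS$(2,k,\cdot)$ as a design.

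The substance of the proof is the distance computation, which I would organize by the three kinds of unordered block pairs, using throughout the elementary fact that two distinct rows of an OA$(2,n,k)$ agree in at most one column (a second agreement would make some ordered pair occur twice in a two-column projection). Two first-type blocks have disjoint supports, hence distance $2k$. A first-type block and a second-type block differ in $k-1$ places inside the common binary group, in one place in each of the other $r-1$ binary groups, and in each of the $n-r$ coordinates of the $\Z_{k+1}$ part, giving $(k-1)+(r-1)+(n-r)=n+k-2$. For two second-type blocks from rows $\rho\neq\rho'$, a disagreement in one of the first $r$ (binary) columns contributes $2$ to the distance---two distinct single-$1$ positions in one group---whereas a disagreement in one of the last $n-r$ ($\Z_{k+1}$) columns contributes only $1$---one coordinate bearing two different nonzero symbols. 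Starting from the value $n+r$ obtained when all columns disagree and allowing the at-most-one agreement, the only possibilities are $n+r$, $n+r-1$ (agreement in a $\Z_{k+1}$ column), and $n+r-2$ (agreement in a binary column); the last is attained since, on fixing a binary column $c\le r$ and a symbol $v$, the $k$ rows with $\rho_c=v$ pairwise agree in $c$ and in no other column.

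Finally I would minimize over the three kinds. Since an OA$(2,n,k)$ forces $n\le k+1$, the hypothesis $r<n-1$ yields $r\le k-1$, whence $n+r-2<n+k-2$ and $n+r-2<2k$; the overall minimum distance is therefore exactly $n+r-2$, realized by two second-type blocks agreeing in a binary column. This also explains why the label weakens from MS to MS$_d$: for $r<n-1$ the value $n+r-2$ is strictly smaller than the distance $2(k-2)+1$ that Theorem~\ref{thm:MSfromOA} attains at the boundary $r=n-1$. The crux---and the most error-prone step---is the asymmetric bookkeeping in the third kind: charging $2$ for each binary-column disagreement but only $1$ for each $\Z_{k+1}$-column disagreement, then combining this with the strength-$2$ bound to see that pushing the single permitted agreement into a binary column (possible exactly because $r\ge1$) is what drives the distance down to $n+r-2$.
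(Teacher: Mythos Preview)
Your argument is correct and follows the same route as the paper: verify the pair-covering property from the strength-$2$ condition of the orthogonal array, then compute the minimum distance by casing on pairs of blocks. You are in fact more thorough than the paper's proof, which only records the two sub-cases $d(x_2,y_2)=n-r-1$ and $d(x_2,y_2)=n-r$ among second-type blocks; you additionally treat the first--first and first--second pairs and explicitly exhibit attainment of $n+r-2$.

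One small slip, confined to your closing remark: the assertion that $n+r-2$ is \emph{strictly smaller} than $2(k-2)+1$ for every $r<n-1$ is not true in general. With $n=k+1$ and $r=n-2=k-1$ one gets $n+r-2=2k-2>2k-3$. This does not affect your computation of the minimum distance, only the informal explanation of why the label weakens to $\textup{MS}_d$.
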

\begin{proof}
It easily verified that each $2$-subset of $\Z_2^{rk} \times \Z_{k+1}^{n-r}$ is contained in exactly one codeword ($k$-subset).
Consider two codewords $x=(x_1,x_2)$ and $y=(y_1,y_2)$, where $x_1$ and $y_1$ have length $rk$ and $x_2$ and $y_2$ have length $n-r$.
If $d(x_2,y_2)=n-r-1$ then $d(x_1,y_1) =2r$ and hence $d(x,y)=n+r-1$.
If $d(x_2,y_2)=n-r$ then $d(x_1,y_1) \geq 2(r-1)$ and hence $d(x,y)=n+r-2$.
Thus, $\cS$ is an \textup{MS}$_d(2,k,\Z_2^{(n-1)k} \times \Z_{k+1})$, where $d = n+r-2$.
\end{proof}

\begin{corollary}
\label{cor:fromOA}
If there exists an \textup{OA}$(2,n,k)$ then there exists an \textup{MS}$_d(2,k,\Z_2^{rk} \times \Z_{k+1}^{n-r})$,
$d=n+r-2$, for each $r$, $1 \leq r < n-1$.
\end{corollary}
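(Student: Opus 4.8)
The plan is to read this off directly from the constructive statement in Theorem~\ref{thm:fromOA}, which has already carried out all of the verification. The corollary is a pure existence claim, indexed by the parameter $r$, so the strategy is to exhibit, for each admissible $r$, a concrete witness system and then cite the theorem for its parameters rather than re-deriving them.

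Concretely, assume an \textup{OA}$(2,n,k)$ exists and fix any $r$ with $1 \leq r < n-1$. First I would feed this array into Construction~\ref{const:fromOA} with this value of $r$, obtaining the explicit system $\cS$ on $\Z_2^{rk} \times \Z_{k+1}^{n-r}$. Then I would invoke Theorem~\ref{thm:fromOA}, whose hypothesis $1 \leq r < n-1$ is exactly the standing assumption, to conclude that this $\cS$ is an \textup{MS}$_d(2,k,\Z_2^{rk} \times \Z_{k+1}^{n-r})$ with $d = n+r-2$: the theorem supplies both the covering property (each weight-$2$ word lies in exactly one block) and the minimum-distance value. As $r$ ranges over the stated interval, this yields the claimed system for every such $r$, which is precisely the assertion of the corollary.

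Since Theorem~\ref{thm:fromOA} does all the real work, there is no genuine obstacle; the statement is truly a corollary, and I expect the ``hard'' part to be merely confirming that no additional input is needed. The only points worth flagging, all bookkeeping, are two. First, Construction~\ref{const:fromOA} takes a \emph{single} array \textup{OA}$(2,n,k)$ as input, with $r$ governing only how the $n$ factors are split between $\Z_2^k$-blocks and single $\Z_{k+1}$-coordinates; hence one and the same orthogonal array generates the whole family of systems across all admissible $r$, rather than requiring a separate array for each value. Second, the endpoint $r=n-1$ is deliberately excluded here because it is the regime of Theorem~\ref{thm:MSfromOA}, where the system attains the full minimum distance of a genuine mixed Steiner system \textup{MS}$(2,k,\cdot)$, whereas for $r<n-1$ the distance is only the smaller value $n+r-2$, which is exactly what the \textup{MS}$_d$ notation records. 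With these observations the corollary follows immediately.
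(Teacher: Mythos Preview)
Your proposal is correct and matches the paper's approach exactly: the corollary is stated in the paper with no proof, as it follows immediately from feeding the assumed \textup{OA}$(2,n,k)$ into Construction~\ref{const:fromOA} and applying Theorem~\ref{thm:fromOA} for each $r$ in the given range.
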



Finally, we present an example for a construction of mixed Steiner system MS$(2,3,\Z_2^{2k} \times \Z_{k+1})$.

\begin{example}
Let $\cA$ be an orthogonal array \textup{OA}$(2,3,k)$ (equivalent to a Latin square of order $k$ which exist for all $k > 1$).
Applying Construction~\ref{const:fromOA} we obtain by Theorem~\ref{thm:fromOA} a mixed Steiner system
\textup{MS}$(2,3,\Z_2^{2k} \times \Z_{k+1})$.
\end{example}

\subsection{A Construction Based on a Two Types of Subsets}
\label{sec:OF_pairs}

The next construction is a generalization of Construction~\ref{const:fromLS}.
The construction is based on a new type of an intriguing combinatorial design.
The design is based on a combination of $k$-subsets and $(k-1)$-subsets.
The construction has four parameters, the first two are $n$ and $q$, where the codewords of the system
are on $\Z_2^n \times \Z_q$. The other two parameters are $k$ and $t$.
There are two sets in the construction, a first set $\cR$ of $k$-subsets of $\Z_n$ in which each
$t$-subset of $\Z_n$ is contained in at most one $k$-subset of $\cR$.
The second set $\cT$, contains $r$ subsets, $\cT = \{\cT_1,\cT_2,\ldots,\cT_r\}$, each $\cT_i$ is a Steiner system S$(t-1,k-1,n)$ on $\Z_n$
and each two distinct $\cT_i$'s are disjoint. Moreover, each $t$-subset of $\Z_n$ which is not contained in a $k$-subset of $\cR$
is contained in exactly one of the $\cT_i$'s.

\begin{construction}
\label{const:recursive_k_k1}
Given the parameters $n$, $q$, $k$, and $t$, and the sets $\cR$ and $\cT$, we construct the following sets of $k$-subset
to form a code $\cC$.
$$
\{ \{x_1,x_2,\ldots x_k \} ~:~ \{ x_1,x_2,\ldots,x_k \} \in \cR \}
$$
and
$$
\{ \{ x_1,x_2,\ldots,x_{k-1},(n+1,i) \} ~:~ \{ x_1,x_2,\ldots,x_{k-1} \} \in \cT_i ,~~ 1 \leq i \leq r \} ~.
$$
\end{construction}

\begin{theorem}
\label{thm:recursive_k_k1}
The code $\cC$ formed in Construction~\ref{const:recursive_k_k1} is a mixed Steiner system \textup{MS}$(t,k,\Z_2^n \times \Z_{r+1})$.
\end{theorem}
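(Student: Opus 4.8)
The plan is to check the three requirements in the definition of \textup{MS}$(t,k,\Z_2^n\times\Z_{r+1})$: every block has weight $k$, every word of weight $t$ is covered exactly once, and the minimum distance equals $2(k-t)+1$. The weight requirement is immediate, since a block drawn from $\cR$ is a $k$-subset of $\Z_n$, while a block drawn from $\cT_i$ is a $(k-1)$-subset of $\Z_n$ together with the single nonzero entry $(n+1,i)$, hence again of weight $k$.

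For the covering requirement I would split the words of weight $t$ into two families according to their $(n+1)$-th coordinate. If that coordinate is zero, the word is a $t$-subset $S\subseteq\Z_n$, and a block of $\cC$ contains $S$ exactly when either $S\subseteq R$ for some $R\in\cR$ or $S$ lies in a $(k-1)$-block of some $\cT_i$. The packing property of $\cR$ and the fact that each $\cT_i$ is an \textup{S}$(t-1,k-1,n)$ make each of these alternatives occur at most once, and the hypotheses on the pair $(\cR,\cT)$ — that the $t$-subsets met by $\cR$ and the $t$-subsets met by the $\cT_i$'s together partition all $t$-subsets of $\Z_n$ — force exactly one covering block. If the $(n+1)$-th coordinate carries a nonzero symbol $i$, the word has the form $\{x_1,\dots,x_{t-1},(n+1,i)\}$ and can only lie in a block carrying $(n+1,i)$, i.e.\ a block built from $\cT_i$; this happens precisely when $\{x_1,\dots,x_{t-1}\}$ lies in a block of $\cT_i$, and since $\cT_i$ is an \textup{S}$(t-1,k-1,n)$ there is exactly one such block.

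The minimum distance is where the real work lies, and I would treat all pairs of blocks by cases on their last coordinates. Two blocks of $\cR$ agree there and meet in at most $t-1$ points of $\Z_n$ (else a $t$-subset would lie in two blocks of $\cR$), so their distance is at least $2k-2(t-1)=2(k-t)+2$; two blocks of the same $\cT_i$ agree there and meet in at most $t-2$ points, again giving at least $2(k-t)+2$; a block of $\cR$ and a block of $\cT_i$ differ in the last coordinate and meet in at most $t-1$ points of $\Z_n$, giving at least $(2k-1-2(t-1))+1=2(k-t)+2$.

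The decisive case — and the main obstacle — is a pair of blocks coming from two distinct systems $\cT_i$ and $\cT_j$. They differ in the last coordinate, and the key point is that they can share at most $t-1$ points of $\Z_n$, since a common $t$-subset would be covered by two different $\cT$-blocks, contradicting the uniqueness established above; this yields distance at least $2(k-1)-2(t-1)+1=2(k-t)+1$. This is the only case in which the distance can drop to $2(k-t)+1$, so to finish I would exhibit equality (for $r\ge 2$, i.e.\ the genuinely mixed case): the two blocks of $\cT_i$ and $\cT_j$ containing a fixed $(t-1)$-subset are distinct, because the $\cT$'s are pairwise disjoint, and hence meet in exactly $t-1$ points, realizing distance $2(k-t)+1$. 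Together these show the minimum distance is exactly $2(k-t)+1$, completing the verification that $\cC$ is an \textup{MS}$(t,k,\Z_2^n\times\Z_{r+1})$.
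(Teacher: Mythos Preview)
Your argument is correct and follows essentially the same plan as the paper: check the block weights and split the covering verification according to whether the last coordinate is zero or a nonzero symbol of $\Z_{r+1}$. In fact you go further than the paper's own proof, which stops after the covering property and omits the minimum-distance verification entirely; your case analysis on pairs of blocks---and in particular the sharp case of two blocks drawn from distinct $\cT_i$ and $\cT_j$, together with the witness for equality when $r\ge 2$---supplies exactly what the paper leaves unproved.
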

\begin{proof}
By the definition of the set $\cR$ and the $\cT_i$'s the codewords have weight $k$ and each $t$-subset of $\Z_n$ is
contained in exactly one codeword of $\cC$.

For a word $\{ x_1,x_2,\ldots,x_{t-1},(n+1,i)\}$, where
$\{ x_1,x_2,\ldots,x_{t-1} \} \in \Z_n$ and $i \in \Z_q \setminus \{ \bf0 \}$. The word $\{x_1,x_2,\ldots,x_{t-1}\}$
is contained in on $(k-1)$-subset of $\cT_i$ since $\cT_i$ is a Steiner system S$(t-1,k-1,n)$ and hence
$\{x_1,x_2,\ldots,x_{t-1},(n+1,i)\}$ is contained in a codeword of $\cC$.
\end{proof}

\begin{definition}
\label{dfn:nkPT}
An $(n,r)$-\emph{pairs-triples design} is a set $\cT$ of $r$ pairwise disjoint $r< n-1$ one-factors of the complete graph $K_n$
on the vertices $\Z_n$ and a set $\cR$ of triples of $\Z_n$ such that each pair of $\Z_n$ is contained exactly once either
in one-factor of $\cT$ or in $\cR$.
\end{definition}

\begin{remark}
Definition~\ref{dfn:nkPT} excluded the case when $r=n-1$ which is associated with a one-factorization of $K_n$
and with the system constructed in Construction~\ref{const:fromLS} and the mixed Steiner systems \textup{MS}$(2,3,\Z_2^n \times \Z_n)$
as constructed in Example~\ref{exm:LS_STS}.
\end{remark}

\begin{construction}
\label{const:PR_TR}
Let $\cT = \{ \cT_1 , \cT_2, \ldots \cT_r \}$, where $r > 1$, and $\cR$ be an $(n,r)$-pairs-triples design.
We form the following sets of codewords on $\Z_2^n \times \Z_{r+1}$:
$$
\{ \{x,y,z\} ~:~ \{x,y,z\} \in \cR \}
$$
and
$$
\{ \{ x,y,(n+1,i) \} ~:~ \{x,y\} \in \cT_i, ~ 1 \leq i \leq r \} ~.
$$
\end{construction}

The following theorem is a special case of Theorem~\ref{thm:recursive_k_k1}.

\begin{theorem}
\label{thm:PR_TR}
The code formed by the codewords in Construction~\ref{const:PR_TR} yields a mixed Steiner system \textup{MS}$(2,3,\Z_2^n \times \Z_{r+1})$.
\end{theorem}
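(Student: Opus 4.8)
The plan is to recognize Theorem~\ref{thm:PR_TR} as an immediate specialization of the already-proved Theorem~\ref{thm:recursive_k_k1}, so the bulk of the work is verifying that an $(n,r)$-pairs-triples design supplies exactly the ingredients $\cR$ and $\cT$ demanded by Construction~\ref{const:recursive_k_k1} when we set $t=2$ and $k=3$. First I would fix these parameter values and unwind the definitions: with $t=2$, a ``$t$-subset'' is a pair, a $k$-subset is a triple, a $(k-1)$-subset is a pair, and a Steiner system S$(t-1,k-1,n)=$ S$(1,2,n)$ is precisely a one-factor of $K_n$. Under this dictionary the set $\cR$ of Construction~\ref{const:recursive_k_k1} becomes a set of triples of $\Z_n$ in which each pair lies in at most one triple, and the family $\cT=\{\cT_1,\dots,\cT_r\}$ becomes $r$ pairwise disjoint one-factors such that every pair not covered by a triple of $\cR$ is covered by exactly one $\cT_i$. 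This is word-for-word the content of Definition~\ref{dfn:nkPT}.

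Next I would check the covering condition that Theorem~\ref{thm:recursive_k_k1} requires, namely that every $t$-subset (here every pair) of $\Z_n$ be contained in exactly one codeword. The pairs-triples design guarantees that each pair of $\Z_n$ is covered exactly once, either by a triple of $\cR$ or by an edge of a unique one-factor $\cT_i$; these two cases are mutually exclusive. A pair covered by a triple of $\cR$ contributes a codeword $\{x,y,z\}$, while a pair $\{x,y\}$ lying in $\cT_i$ contributes a codeword $\{x,y,(n+1,i)\}$, exactly matching the two block families produced by Construction~\ref{const:PR_TR}. Thus the hypotheses on $\cR$ and $\cT$ in the general construction are met, and Construction~\ref{const:PR_TR} is literally Construction~\ref{const:recursive_k_k1} with $t=2$, $k=3$.

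Having established this identification, I would invoke Theorem~\ref{thm:recursive_k_k1} directly: it asserts that the code $\cC$ produced is a mixed Steiner system MS$(t,k,\Z_2^n \times \Z_{r+1})$, which with $t=2$, $k=3$ reads MS$(2,3,\Z_2^n \times \Z_{r+1})$, as claimed. This step is where I expect essentially no obstacle, since the paper has already proved the general statement; the only care needed is the bookkeeping that a one-factor really is an S$(1,2,n)$ and that the ``at most once'' condition on $\cR$ together with the partition-of-remaining-pairs property of the design yields the ``exactly once'' covering hypothesis used in the general theorem.

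The main (and only genuine) subtlety is confirming that the minimum-distance requirement implicit in the definition of a mixed Steiner system is inherited correctly, since Theorem~\ref{thm:recursive_k_k1} as stated certifies the MS property (including distance $2(k-t)+1 = 2(3-2)+1 = 3$) rather than merely the covering property. Because that distance claim is already absorbed into the conclusion of Theorem~\ref{thm:recursive_k_k1}, no separate distance computation is needed here; the whole proof is therefore just the verification that the pairs-triples design instantiates the general hypotheses, followed by a one-line appeal to the earlier theorem.
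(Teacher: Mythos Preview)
Your proposal is correct and matches the paper's approach exactly: the paper states explicitly that Theorem~\ref{thm:PR_TR} is a special case of Theorem~\ref{thm:recursive_k_k1}, and your argument supplies precisely the parameter identification ($t=2$, $k=3$, S$(1,2,n)=$ one-factor) that justifies this. No further work is needed.
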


\begin{corollary}
If there exists an $(n,r)$-pairs-triples design, then there exists a mixed Steiner system \textup{MS}$(2,3,\Z_2^n \times \Z_{r+1})$.
\end{corollary}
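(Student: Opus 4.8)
The plan is to verify the three defining properties of a mixed Steiner system $\text{MS}(2,3,\Z_2^n \times \Z_{r+1})$ for the code $\cC$ produced by Construction~\ref{const:PR_TR}: every codeword has weight $3$, every $2$-subset (pair) of $\Z_2^n \times \Z_{r+1}$ is covered by exactly one codeword, and the minimum distance is $2(3-2)+1 = 3$. Since the theorem is stated as a special case of Theorem~\ref{thm:recursive_k_k1} with $t=2$ and $k=3$, the cleanest approach is to check that an $(n,r)$-pairs-triples design (Definition~\ref{dfn:nkPT}) supplies exactly the sets $\cR$ and $\cT$ required as input to Construction~\ref{const:recursive_k_k1}, and then invoke Theorem~\ref{thm:recursive_k_k1} directly.

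First I would confirm the dictionary between the two constructions. With $t=2$ and $k=3$, the set $\cR$ of Construction~\ref{const:recursive_k_k1} must be a collection of $3$-subsets of $\Z_n$ in which each $2$-subset is contained in at most one member; the triples $\cR$ of the pairs-triples design satisfy this because each pair lies in at most one triple (by the ``exactly once either in a one-factor or in $\cR$'' condition). Each $\cT_i$ must be a Steiner system $\text{S}(t-1,k-1,n) = \text{S}(1,2,n)$ on $\Z_n$; but an $\text{S}(1,2,n)$ is precisely a one-factor (a perfect matching covering every vertex once), so the one-factors $\cT_1,\dots,\cT_r$ of the design are exactly the required Steiner systems, and they are pairwise disjoint as demanded. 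Finally, the condition that each $2$-subset not contained in a triple of $\cR$ lies in exactly one $\cT_i$ is precisely the covering clause of the pairs-triples design.

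With this identification the hypotheses of Theorem~\ref{thm:recursive_k_k1} are met verbatim, so that theorem yields the conclusion that the code of Construction~\ref{const:PR_TR} (which is literally Construction~\ref{const:recursive_k_k1} specialized to triples and pairs) is an $\text{MS}(2,3,\Z_2^n \times \Z_{r+1})$. I would present the proof as a one-line reduction: observe that the input data of an $(n,r)$-pairs-triples design instantiate the general sets $\cR$ and $\cT$ with parameters $t=2$, $k=3$, and that the two families of codewords listed in Construction~\ref{const:PR_TR} coincide with those of Construction~\ref{const:recursive_k_k1}; then apply Theorem~\ref{thm:recursive_k_k1}.

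I do not anticipate a genuine obstacle here, since the result is explicitly framed as a specialization; the only point requiring a moment's care is the recognition that a one-factor of $K_n$ is the same object as a Steiner system $\text{S}(1,2,n)$, and that the ``at most one'' condition on pairs-in-triples matches the $\cR$-requirement exactly. One mild subtlety worth a sentence is that Theorem~\ref{thm:recursive_k_k1} as stated establishes the covering property but does not separately discuss the minimum distance; however, the minimum-distance claim follows by the same argument as in Theorem~\ref{thm:fromLS} (codewords sharing the same symbol in the last coordinate differ in their first-$n$ support, giving distance $4$, while codewords with distinct last symbols share at most one point among the first $n$ coordinates, giving distance $\geq 3$), so if needed I would append that short distance check to close the proof.
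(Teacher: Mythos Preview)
Your proposal is correct and follows essentially the same route as the paper: the corollary is stated without proof as an immediate consequence of Theorem~\ref{thm:PR_TR}, which the paper in turn declares to be the $t=2$, $k=3$ specialization of Theorem~\ref{thm:recursive_k_k1}. Your identification of a one-factor with an $\text{S}(1,2,n)$ and your matching of the $(n,r)$-pairs-triples design data to the general $\cR,\cT$ input is exactly the dictionary the paper relies on, and your observation that the minimum-distance verification is not spelled out in the proof of Theorem~\ref{thm:recursive_k_k1} is accurate and worth the short check you outline.
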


Constructions of sets of $k$-subsets and the $(k-1)$-subsets described in this section have their own interests
and they motivate future research. We continue with such subsets with $k=3$ and $t=2$.
Constructions of an $(n,r)$-pairs-triples designs are important in constructions of
MS$(2,3,\Z_2^n \times \Z_{r+1})$ and the rest of this section will consider some of their constructions.
The constructions have similarity to other constructions of disjoint triple systems, e.g.,~\cite{Etz92a,Etz92b}.

Theorem~\ref{thm:PR_TR} derived from Construction~\ref{const:PR_TR}
and the necessary conditions for the existence of an MS$(2,3,\Z_2^n \times Z_q)$ given in Theorem~\ref{thm:necessry2,3}
yield the following theorem.

\begin{theorem}
\label{thm:nkPT}
An $(n,r)$-pairs-triples design exists only if $n$ is even and $r$ is odd, $1 \leq r \leq n-1$.
If $n \equiv 2$ or $4~(\mmod ~6)$, then
$r \equiv n-1 ~(\mmod ~6)$, and if $n$ divisible by 6, then $r$ is odd.
\end{theorem}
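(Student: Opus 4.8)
The plan is to derive Theorem~\ref{thm:nkPT} as a specialization of the necessary conditions for mixed Steiner systems MS$(2,3,\Z_2^n \times \Z_q)$ obtained in Theorem~\ref{thm:necessry2,3}, exploiting the correspondence furnished by Theorem~\ref{thm:PR_TR}. First I would isolate the conditions that are intrinsic to the design itself. A one-factor of $K_n$ exists only when $n$ is even, so $n$ must be even. The $r$ one-factors are pairwise disjoint, hence they use $rn/2$ distinct edges of $K_n$; since $K_n$ has only $\binom{n}{2}$ edges, this gives $rn/2 \le \binom{n}{2}$, i.e.\ $r \le n-1$, and together with $r \ge 1$ this establishes the range $1 \le r \le n-1$.

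The heart of the argument is the reduction to an already-proved necessary condition. By Theorem~\ref{thm:PR_TR}, an $(n,r)$-pairs-triples design yields a mixed Steiner system MS$(2,3,\Z_2^n \times \Z_{r+1})$, so any necessary condition for the latter, read with $q = r+1$, becomes a necessary condition for the former. Feeding $q = r+1$ into Theorem~\ref{thm:necessry2,3} reconfirms $n \ge r+1$ and forces both $n$ and $q = r+1$ to be even; the latter is exactly the assertion that $r$ is odd.

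It then remains only to translate the modular alternative of Theorem~\ref{thm:necessry2,3} through the substitution. Because every even $n$ is congruent to $0$, $2$, or $4$ modulo $6$, the two branches of that theorem cover all cases. When $n \equiv 2$ or $4 \pmod 6$, the theorem yields $q \equiv n \pmod 6$, and $q = r+1$ turns this into $r \equiv n-1 \pmod 6$. When $6 \mid n$, the theorem asserts that $q$ is even, which is again the statement that $r$ is odd. Assembling these conclusions reproduces Theorem~\ref{thm:nkPT} verbatim. Since each step is a substitution into or a specialization of an established result, no genuine obstacle arises; the only place demanding care is tracking the shift $q = r+1$ through the parity and the modulo-$6$ congruences, so that \emph{$q$ even} is correctly read as \emph{$r$ odd} and \emph{$q \equiv n$} as \emph{$r \equiv n-1$}.
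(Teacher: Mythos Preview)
Your proposal is correct and follows essentially the same approach as the paper, which explicitly states that Theorem~\ref{thm:nkPT} is obtained by combining Theorem~\ref{thm:PR_TR} with the necessary conditions of Theorem~\ref{thm:necessry2,3}. Your additional direct arguments for $n$ even (existence of a one-factor) and $r\le n-1$ (edge count) are sound and complement the reduction nicely.
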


\begin{lemma}
\label{lem:simplePT}
If $n \equiv 0$ or $2 ~ (\mmod ~ 6)$ then there exists an $(n,1)$-pairs-triples design.
\end{lemma}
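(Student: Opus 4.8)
The plan is to recognize the object as a classical graph–decomposition (equivalently, a group divisible design) and then invoke the known existence theory. First I would unwind Definition~\ref{dfn:nkPT} in the case $r=1$: an $(n,1)$-pairs-triples design consists of a single one-factor $M=\cT_1$ of $K_n$ together with a set $\cR$ of triples such that every pair of $\Z_n$ not lying in $M$ is covered by exactly one triple of $\cR$, while every pair in $M$ is covered by none. In graph terms this says precisely that $\cR$ partitions the edge set of $K_n-M$ into triangles, i.e.\ that $K_n-M$ admits a triangle decomposition (and that $M$ is a perfect matching, forcing $n$ even).

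Next I would identify $K_n-M$. Viewing each matched pair of $M$ as a \emph{group} of size two, the graph $K_n-M$ is exactly the complete multipartite graph on $n/2$ parts of size two (the cocktail-party graph $K_{2,2,\ldots,2}$). A triangle decomposition of this graph, in which the groups are the edges of $M$ and the blocks are the triangles, is by definition a group divisible design with block size $3$ and all groups of size $2$, that is, a $3$-GDD of type $2^{n/2}$. Thus an $(n,1)$-pairs-triples design on $\Z_n$ is equivalent to a $3$-GDD of type $2^{n/2}$, the groups being the pairs of the one-factor.

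It then remains to quote the classical existence result (see~\cite{Han75}): a $3$-GDD of type $2^u$ exists if and only if $u\ge 3$ and $u\equiv 0$ or $1\pmod 3$. Writing $u=n/2$, the congruence $u\equiv 0\pmod 3$ is $n\equiv 0\pmod 6$ and $u\equiv 1\pmod 3$ is $n\equiv 2\pmod 6$, while $u\ge 3$ is $n\ge 6$; since Definition~\ref{dfn:nkPT} already imposes $r<n-1$ and hence $n>2$, the bound $n\ge 6$ is automatic in the stated range. Reading the equivalence in the easy direction (GDD $\Rightarrow$ design) yields the required $(n,1)$-pairs-triples design whenever $n\equiv 0$ or $2\pmod 6$, which is exactly the claim and matches the necessity for $r=1$ recorded in Theorem~\ref{thm:nkPT}.

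The only real content is the sufficiency half of the GDD theorem, and if a self-contained argument is wanted instead of a citation I would build the design by a rotational (difference) construction over $\Z_n$, taking $M$ to be the diameters $\{i,i+n/2\}$ and covering the remaining difference classes $1,\ldots,n/2-1$ by developed base triangles. The hard part will appear here: for $n\equiv 2\pmod 6$ there are $n/2-1\equiv 0\pmod 3$ residual differences, which one partitions into additive difference triples by a Skolem/Heffter-type argument (with sub-cases according to $n$ modulo a small modulus), whereas for $n\equiv 0\pmod 6$ one has $n/2-1\equiv 2\pmod 3$ differences, so no pure partition into difference triples exists and one must introduce a short orbit or an exceptional base block to absorb the two leftover difference classes. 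Managing these residue sub-cases cleanly is the delicate step, which invoking the group divisible design theorem avoids entirely.
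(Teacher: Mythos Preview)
Your argument is correct, but it is considerably more elaborate than what the paper actually does. The paper's proof is a one-line point-deletion: take a Steiner triple system $\cS=\textup{S}(2,3,n+1)$ on $\Z_{n+1}$ (which exists precisely because $n\equiv 0$ or $2\pmod 6$ forces $n+1\equiv 1$ or $3\pmod 6$), let $\cR$ be the triples of $\cS$ avoiding the point $n$, and let $\cT_1=\{\{x,y\}:\{x,y,n\}\in\cS\}$. That is the whole proof.

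Your route---recasting the object as a $3$-GDD of type $2^{n/2}$ and invoking Hanani~\cite{Han75}---is valid and has the virtue of placing the lemma in a broader framework, but it imports machinery that the paper does not need. In fact the standard proof of the sufficiency direction of the $3$-GDD theorem for type $2^u$ \emph{is} the point-deletion construction above, so your citation ultimately unwinds to the paper's argument. The rotational/difference construction you sketch in your last paragraph would work too, but as you yourself note it requires Skolem/Heffter casework, whereas the STS deletion needs none. For this lemma the direct approach is both shorter and self-contained.
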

\begin{proof}
Let $\cS$ be a Steiner triple system S$(2,3,n+1)$ on $\Z_{n+1}$. It is easy to verify that the set of triples
$$
\cR=\{ \{ x,y,z \} ~:~ \{ x,y,z \} \in \cS, ~ n \notin \{x,y,z\} \}
$$
and the set of pairs
$$
\cT_1=\{ \{ x,y \} ~:~ \{ x,y,n \} \in \cS \}
$$
form an $(n,1)$-pairs-triples design.
\end{proof}


\begin{conjecture}
\label{conj:nkPT}
An $(n,r)$-pairs-triples design exists for each $n$ divisible by 6 and odd $r$, $1 \leq r < n-1$ and
for each $n \equiv 2$ or $4~(\mmod ~6)$ and $r \equiv n-1 ~(\mmod ~6)$, $1 \leq r < n-1$.
\end{conjecture}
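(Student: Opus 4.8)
The plan is to recast Definition~\ref{dfn:nkPT} as a single edge-decomposition problem and then deploy the standard toolkit of design theory. An $(n,r)$-pairs-triples design is exactly a decomposition of the edge set of $K_n$ into $r$ edge-disjoint one-factors $\cT_1,\dots,\cT_r$ together with a set $\cR$ of edge-disjoint triangles that jointly cover every edge once; equivalently, after deleting the $r$ disjoint one-factors we are left with the $(n-1-r)$-regular graph $K_n-(\cT_1\cup\cdots\cup\cT_r)$, which must be partitioned into triangles. We are free to choose the one-factors, so it suffices to produce a single one-factorable $r$-regular graph whose complement in $K_n$ has a triangle decomposition. Under this reformulation the numerical restrictions of Theorem~\ref{thm:nkPT} are precisely the divisibility conditions (even degree, and edge count divisible by $3$) for a triangle decomposition of the complement, so the conjecture asserts that these conditions are sufficient throughout the range.

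First I would fix the residue classes and small orders by difference constructions, generalizing Lemma~\ref{lem:simplePT}. There a single distinguished point of a Steiner triple system S$(2,3,n+1)$ supplies the one-factor $\cT_1$ (the pairs completed through that point) while the triples avoiding it supply $\cR$. The natural extension is to distinguish $r$ points in a design on $n+r$ points and to show that the pairs completed through these points can be reorganized into $r$ edge-disjoint one-factors of $K_n$, with the triples that avoid all $r$ points giving a triangle decomposition of the remainder. The awkward part is the interaction among the $r$ distinguished points — the triples lying entirely inside the distinguished set, and the edges joining distinguished points to $\Z_n$ — which must be absorbed without destroying the one-factor structure; handling this cleanly is where a rotational or cyclic difference family over $\Z_n$ or $\Z_{n-1}\cup\{\infty\}$ would be organized so as to hit each target residue.

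For the bulk of the range I would combine two mechanisms. For small $r$, the complement $K_n-(\cT_1\cup\cdots\cup\cT_r)$ has minimum degree $n-1-r$ exceeding any fixed fraction of $n$, so for large $n$ the triangle-decomposition theorems for graphs of high minimum degree (in the spirit of Keevash and of Barber, K\"uhn, Lo and Osthus) apply once the divisibility conditions hold, delivering these cases essentially for free. For the remaining $r$ I would use recursive group-divisible constructions: build frames and group divisible designs with block sizes $2$ and $3$, in which the blocks of size $2$ assemble into the one-factors and the blocks of size $3$ into $\cR$, and then fill in the holes to multiply $n$ while steering $r$ through the admissible values. Threading $r$ through these operations so that \emph{every} admissible $r$ (odd for $6\mid n$, and $r\equiv n-1\pmod 6$ for $n\equiv 2,4\pmod 6$) is realized, rather than a sparse subfamily, is the principal bookkeeping challenge.

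The main obstacle is the sparse regime, where $r$ is close to $n-1$ and the complement is a low-degree regular graph. Here the high-minimum-degree machinery is silent, and the most tempting construction also fails: one cannot simply keep $n-1-r$ one-factors of a one-factorization of $K_n$ and triangulate their union, because a union of one-factors is a disjoint collection of even cycles, and an even cycle never decomposes into (odd) triangles. Thus for small $n-1-r$ the complement must contain odd cycles and is forced into a rigid shape — when $n-1-r=2$ it must be a union of $n/3$ triangles, which is exactly why this case is confined to $6\mid n$ — so the sparse end has to be met by bespoke, highly structured constructions and then matched continuously with the recursive range up to (but excluding) the boundary value $r=n-1$. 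Establishing this uniformly across all admissible $(n,r)$ is, I expect, where the genuine difficulty of the conjecture resides.
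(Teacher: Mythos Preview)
This statement is a \emph{conjecture} in the paper, not a theorem: the paper gives no proof, and in Section~\ref{sec:conclusion} explicitly lists ``proving Conjecture~\ref{conj:nkPT}'' among the open problems. So there is no argument in the paper to compare your proposal against; the paper only supplies partial evidence via Lemma~\ref{lem:simplePT}, the recursive Constructions~\ref{const:recursivePT}--\ref{const:recursiveMN}, and the small explicit examples for $n\in\{6,8,10,12\}$.

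Your reformulation is correct and useful: an $(n,r)$-pairs-triples design is exactly a choice of $r$ edge-disjoint one-factors in $K_n$ whose complement admits a triangle decomposition, and the numerical conditions of Theorem~\ref{thm:nkPT} are precisely the divisibility obstructions. But what you have written is an outline of a strategy, not a proof, and you say as much yourself. The high-minimum-degree triangle-decomposition theorems you invoke are asymptotic: they require $n$ large (with no effective bound in the Keevash argument), so they do not settle the conjecture for all admissible $(n,r)$ even in the ``small $r$'' regime. The recursive/GDD layer is only sketched; you correctly flag that hitting \emph{every} admissible $r$, rather than a sparse arithmetic progression of values, is the real bookkeeping problem, and you do not carry it out. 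Finally, you identify the sparse regime $r$ close to $n-1$ as the genuine obstacle and leave it open.

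In short, your proposal is a reasonable research plan that goes beyond the paper's own recursive constructions by bringing in the dense-graph decomposition machinery, but it does not close the conjecture, and the gaps you name --- small $n$, exhaustive coverage of $r$, and the sparse end --- are exactly the ones that keep Conjecture~\ref{conj:nkPT} open.
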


Lemma~\ref{lem:simplePT} and the constructed $(n,1)$-pairs-triples designs cannot be used for constructions of
mixed Steiner system by Construction~\ref{const:PR_TR}. But, these $(n,1)$-pairs-triples sets can be used in the following
recursive construction for $(n,r)$-pairs-triples designs for $r>1$. This construction is a doubling construction.

\begin{construction}
\label{const:recursivePT}
Let $T=\{ T_0 ,T_1,\ldots, T_{r-1} \}$ be a set of pairwise disjoint one-factors and let $R$ be a set of triples
in an $(n,r)$-pairs-triples design on $\Z_n$. We construct a set of triples and sets of pairs on the points of $\Z_n \times \Z_2$.
The first set is of triples
$$
\cR \triangleq \{ \{ (x,i),(y,i),(z,i) \} ~:~ \{ x,y,z \} \in R, ~~ i \in \{0,1\} \}~.
$$
The first $n$ sets of pairs are
$$
\cT_i \triangleq \{ \{ (x,0),(x+i,1) \} ~:~ 0 \leq x \leq n-1 \}, ~~  0 \leq i \leq n-1
$$
The next $r$ sets of pairs are
$$
\cT_{n+i} \triangleq \{ \{ (x,j),(y,j) \} ~:~ \{ x,y \} \in T_i , ~~ j \in \{ 0,1 \} \}, ~~ 0 \leq i \leq r-1 ~.
$$
\end{construction}

\begin{theorem}
\label{thm:recursivePT}
If there exists an $(n,r)$-pairs-triples design, then there exist a $(2n,n+r)$-pairs-triples design.
\end{theorem}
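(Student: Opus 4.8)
The plan is to verify directly that the sets $\cR$ and $\{\cT_0,\dots,\cT_{n-1},\cT_n,\dots,\cT_{n+r-1}\}$ produced by Construction~\ref{const:recursivePT} satisfy the three requirements of Definition~\ref{dfn:nkPT} on the doubled point set $\Z_n \times \Z_2$, which has $2n$ points. First I would count the new objects. The design must contain $n+r$ pairwise disjoint one-factors of $K_{2n}$: the $n$ sets $\cT_i$ (the ``matching'' families joining layer $0$ to layer $1$ via the shift $x \mapsto x+i$) plus the $r$ sets $\cT_{n+i}$ (which lay a copy of the one-factor $T_i$ inside each layer). Each $\cT_i$ for $0 \le i \le n-1$ is plainly a perfect matching on $\Z_n \times \Z_2$ since $x \mapsto (x,0)$ and $x \mapsto (x+i,1)$ are bijections onto the two layers. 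Each $\cT_{n+i}$ is a union of two copies of the one-factor $T_i$, one in each layer, so it too is a perfect matching of $K_{2n}$. This accounts for exactly $n+r$ one-factors, matching the target parameter.

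Next I would check that every pair of $\Z_n \times \Z_2$ is covered exactly once, splitting pairs by how they meet the two layers. \emph{Cross-layer pairs} $\{(x,0),(y,1)\}$: setting $i = y-x \pmod n$, the pair lies in $\cT_i$ and in no other $\cT_j$ (since $j$ is determined by $y-x$) nor in any intra-layer family; it also cannot sit in $\cR$, whose triples live entirely within a single layer. So each cross-layer pair is covered exactly once, and there are $n^2$ of them, matching $\sum_{x}1$ over the $n$ matchings $\cT_0,\dots,\cT_{n-1}$. \emph{Same-layer pairs} $\{(x,i),(y,i)\}$: the key observation is that $\{x,y\}$ is a pair of $\Z_n$, so by the hypothesis that $\cR,T$ form an $(n,r)$-pairs-triples design, $\{x,y\}$ lies in exactly one of $R$ or some $T_j$. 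If $\{x,y\}\in R$, then $\{x,y\}$ belongs to a unique triple of $R$, whose layer-$i$ copy is the unique triple of $\cR$ containing $\{(x,i),(y,i)\}$, and this pair appears in no $\cT_{n+j}$; if $\{x,y\}\in T_j$, then $\{(x,i),(y,i)\}\in\cT_{n+j}$ and in no triple of $\cR$. Either way the same-layer pair is covered exactly once. The cross-layer families $\cT_0,\dots,\cT_{n-1}$ obviously contain no same-layer pair, so there is no double counting. Finally I would confirm the disjointness-as-one-factors condition required by the definition (the one-factors are pairwise disjoint as sets of edges), which follows from the exactly-once covering just established together with the fact that no edge lies in both a cross-layer family and an intra-layer family.

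The main obstacle, and the step deserving the most care, is the same-layer analysis: one must invoke the defining property of the input $(n,r)$-pairs-triples design at precisely the right point, namely that each pair $\{x,y\}$ of $\Z_n$ is covered \emph{exactly once} by $R \cup \{T_0,\dots,T_{r-1}\}$, and then lift that unique covering to the doubled ground set so that the layer-$0$ and layer-$1$ copies do not interfere. The doubling sends each covered pair of $\Z_n$ to two same-layer pairs, and the construction handles both layers symmetrically through the index $i\in\{0,1\}$, so the bookkeeping is clean once the cross-layer pairs are separated off. I would also remark that all triples of $\cR$ have both endpoints' second coordinates equal, so triples never conflict with the cross-layer matchings, and that the parameter inequality $n+r < 2n-1$ needed for Definition~\ref{dfn:nkPT} holds because the input satisfies $r < n-1$, so the output indeed avoids the excluded one-factorization case.
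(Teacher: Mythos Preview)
Your proof is correct and follows essentially the same approach as the paper's own proof: both separate the pairs of $\Z_n \times \Z_2$ into cross-layer pairs (handled by $\cT_0,\dots,\cT_{n-1}$) and same-layer pairs (handled by $\cR$ together with $\cT_n,\dots,\cT_{n+r-1}$, inheriting the exactly-once covering from the input design). Your write-up is in fact more detailed than the paper's terse verification, and your observation that $n+r<2n-1$ follows from the input constraint $r<n-1$ is a nice additional check that the paper omits.
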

\begin{proof}
It is easy to verify that $\cR$ is a set of triples and each $\cT_i$, $0 \leq i \leq n-1$ is a one-factorization of $\Z_n \times \Z_2$.
All the pairs in these one-factorizations contain all pairs that are not contained in $\Z_n \times \{ j \}$, $j=0,1$.
The other $\cT_i$'s, $n \leq i \leq n+r-1$, are one-factorizations of $\Z_n \times \Z_2$ and all the pairs are contained
only in $\Z_n \times \{ j \}$, $j=0,1$. These pairs are exactly those pairs of $\Z_n \times \{ j \}$, $j=0,1$,
that are not contained in the triples of $\cR$ since the pairs of the $T_i$'s are not contained in the pairs of $R$.
\end{proof}

\begin{remark}
\label{rmk:doubling}
When Construction~\ref{const:recursivePT} is applied on $n \equiv 0~(\mmod~6)$ its outcome is for $2n \equiv 0~(\mmod~12)$.
When Construction~\ref{const:recursivePT} is applied on $n \equiv 2~(\mmod~6)$ its outcome is for $2n \equiv 4~(\mmod~12)$.
When Construction~\ref{const:recursivePT} is applied on $n \equiv 4~(\mmod~6)$ its outcome is for $2n \equiv 8~(\mmod~12)$.
Therefore, we do not have any constructions for $(n,r)$-pairs-triples designs when $n \equiv 2,~6$ or $10 ~(\mmod ~ 12)$.
Such construction can be obtained from some of the constructions in the sequel.
\end{remark}

In view of Remark~\ref{rmk:doubling} it is desired to find more values of $n$ for which $(n,r)$-pairs-triples designs exist.
Moreover, not all values of $r$ are covered in this way.

Similar to the doubling Construction~\ref{const:recursivePT} there is
a way we can construct a tripling construction.

\begin{construction}
\label{const:tripling}
Let $T=\{ T_0 ,T_1,\ldots, T_{r-1} \}$ be a set of pairwise disjoint one-factors and let $R$ be a set of triples
in an $(n,r)$-pairs-triples design on $\Z_n$. We construct a set of triples and
sets of pairs on the points of $\Z_n \times \Z_3$. The set of triples $\cR = \cR_1 \cup \cR_2$ is defined by
$$
\cR_1 \triangleq \{ \{ (x,i),(y,i),(z,i) \} ~:~ \{ x,y,z \} \in R, ~~ i \in \{0,1,2\} \}~.
$$
and
$$
\cR_2 \triangleq \{ (x,0),(y,1),(x+y,2) ~:~ x,y \in \Z_n \}~.
$$
The $r$ sets of pairs are defined by
$$
\cT_i \triangleq \{ \{ (x,j),(y,j) \} ~:~ \{ x,y \} \in T_i , ~~ j \in \{ 0,1,2 \} \}, ~~ 0 \leq i \leq r-1 ~.
$$
\end{construction}

Construction~\ref{const:tripling} yields the following theorem which have a similar proof to the one of Theorem~\ref{thm:recursivePT}.
Construction~\ref{const:tripling} is an example of a construction for $(n,r)$-pairs-triples designs for which $n \equiv 6~(\mmod~12)$.

\begin{theorem}
\label{thm:tripling}
If there exists an $(n,r)$-pairs-triples design, then there exist an $(3n,r)$-pairs-triples design.
\end{theorem}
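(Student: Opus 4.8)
The plan is to verify the three required properties of a pairs-triples design for the system constructed on $\Z_n \times \Z_3$: that $\cR$ is a set of triples, that the $\cT_i$'s are pairwise disjoint one-factors of $K_{3n}$, and that every pair of $\Z_n \times \Z_3$ appears exactly once, either in a triple of $\cR$ or in one of the one-factors. The count of one-factors should come out to be $r$ (matching the claim that the output is a $(3n,r)$-design), so I must confirm that $\cR_2$ absorbs exactly the ``cross-layer'' pairs that are not covered by the $\cT_i$'s, leaving the within-layer pairs to be split between $\cR_1$ and the $\cT_i$'s exactly as in the original $(n,r)$-design.

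First I would classify the pairs of $\Z_n \times \Z_3$ into two kinds: \emph{within-layer} pairs $\{(x,i),(y,i)\}$ with the same second coordinate $i \in \{0,1,2\}$, and \emph{cross-layer} pairs $\{(x,i),(y,j)\}$ with $i \neq j$. For the within-layer pairs, I would argue exactly as in Theorem~\ref{thm:recursivePT}: within each fixed layer $i$, the triples of $\cR_1$ cover precisely the pairs that the triples of $R$ cover in the base design, and the one-factors $\cT_i$ (restricted to that layer) cover precisely the pairs that the $T_i$'s cover in the base design. Since $R$ and $T$ form an $(n,r)$-design, every within-layer pair is covered exactly once. For the cross-layer pairs, I would examine $\cR_2 = \{\{(x,0),(y,1),(x+y,2)\} : x,y \in \Z_n\}$: each such triple contains exactly one $(0,1)$-pair, one $(0,2)$-pair, and one $(1,2)$-pair. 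I would then check that as $(x,y)$ ranges over $\Z_n^2$, these triples cover each cross-layer pair exactly once --- e.g. a $(0,1)$-pair $\{(x,0),(y,1)\}$ determines $x$ and $y$ hence a unique triple; a $(0,2)$-pair $\{(x,0),(z,2)\}$ forces $y=z-x$ hence a unique triple; and symmetrically for $(1,2)$-pairs. This is the key verification and the main place where the group structure of $\Z_n$ is used.

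The hard part, which is really the only nonroutine point, is confirming that $\cR_2$ covers each of the three classes of cross-layer pairs \emph{exactly} once and produces no collisions: there are $3n^2$ cross-layer pairs in total ($n^2$ of each type) and $n^2$ triples in $\cR_2$, each accounting for one pair of each type, so a clean bijection argument settles it. I would also note that $\cR_2$ is disjoint from $\cR_1$ (the latter lies entirely within single layers) and that no $\cT_i$ contains a cross-layer pair, so there is no overlap between the two mechanisms.

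Finally I would confirm the parameters. The number of one-factors is exactly $r$, since only the within-layer one-factors $\cT_0,\dots,\cT_{r-1}$ are produced and $\cR_2$ handles all cross-layer pairs via triples; hence the output is a $(3n,r)$-pairs-triples design on the $3n$ points of $\Z_n \times \Z_3$. I would verify each $\cT_i$ is genuinely a one-factor (a perfect matching) of $K_{3n}$: since $T_i$ is a one-factor of $K_n$ it partitions $\Z_n$ into pairs, and taking its image in each of the three layers partitions $\Z_n \times \Z_3$ into pairs, so $\cT_i$ is a one-factor. Pairwise disjointness of the $\cT_i$'s follows from pairwise disjointness of the $T_i$'s. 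This completes the verification that Construction~\ref{const:tripling} yields the claimed design, establishing the theorem.
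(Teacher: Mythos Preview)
Your proposal is correct and follows exactly the approach the paper intends: the paper simply states that Construction~\ref{const:tripling} yields the result and that the proof is ``similar to the one of Theorem~\ref{thm:recursivePT},'' without spelling out the details. Your verification of the within-layer pairs via $\cR_1$ and the $\cT_i$'s, together with the bijection argument showing that $\cR_2$ covers each of the three types of cross-layer pairs exactly once, is precisely the analogue of the doubling proof and in fact supplies more detail than the paper itself gives.
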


The tripling construction of Construction~\ref{const:tripling} is generalized using any Steiner triple system.

\begin{construction}
\label{const:multiplySTS}
Let $T=\{ T_0 ,T_1,\ldots, T_{r-1} \}$ be a set of pairwise disjoint one-factors and let $R$ be a set of triples
in an $(n,r)$-pairs-triples design on $\Z_n$. Let $\cS$ be a Steiner triple system \textup{S}$(2,3,m)$.
We construct a set of triples and sets of pairs on the points of $\Z_n \times \Z_m$.
The set of triples $\cR = \cR_1 \cup \cR_2$ is defined by
$$
\cR_1 \triangleq \{ \{ (x,i),(y,i),(z,i) \} ~:~ \{ x,y,z \} \in R, ~~ i \in \Z_m \}~.
$$
and
$$
\cR_2 \triangleq \{ (x,i),(y,j),(x+y,\ell) ~:~ x,y \in \Z_n, ~~ \{i,j,\ell\} \in \cS \}~.
$$
The $r$ sets of pairs are defined by
$$
\cT_i \triangleq \{ \{ (x,j),(y,j) \} ~:~ \{ x,y \} \in T_i , ~~ j \in \Z_m \}, ~~ 0 \leq i \leq r-1 ~.
$$
\end{construction}

Theorem~\ref{thm:tripling} can be generalized using Construction~\ref{const:multiplySTS} to obtain the following result.

\begin{theorem}
\label{thm:Recur_STS}
If there exists an $(n,r)$-pairs-triples design, then there exists an $(mn,r)$-pairs-triples design, for each $m \equiv 1$ or $3~(\mmod~6)$.
\end{theorem}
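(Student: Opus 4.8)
The plan is to verify, for the system that Construction~\ref{const:multiplySTS} produces on the point set $\Z_n \times \Z_m$, the three requirements of Definition~\ref{dfn:nkPT}: that the $\cT_i$ are $r$ pairwise disjoint one-factors of $K_{mn}$, that $r < mn-1$, and that every pair of $\Z_n \times \Z_m$ lies in exactly one of these one-factors or in exactly one triple of $\cR = \cR_1 \cup \cR_2$. The argument runs parallel to the proofs of Theorem~\ref{thm:recursivePT} and Theorem~\ref{thm:tripling}, the essential new ingredient being that the three occupied ``layers'' $\Z_n \times \{i\}$ of each block of $\cR_2$ are indexed by a genuine triple $\{i,j,\ell\}$ of the Steiner triple system $\cS$. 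First I would split the pairs of $\Z_n \times \Z_m$ into \emph{same-layer} pairs $\{(x,i),(y,i)\}$ and \emph{cross-layer} pairs $\{(x,i),(y,j)\}$ with $i \neq j$, and observe that the two kinds are covered by disjoint parts of the construction: since every block of $\cR_2$ meets three distinct layers (the $i,j,\ell$ of an $\cS$-block are distinct), $\cR_2$ contributes only cross-layer pairs, while $\cR_1$ and all the $\cT_i$ lie inside single layers and contribute only same-layer pairs. This observation also shows that each $\cR_2$ block is a genuine triple of three distinct points.

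For the one-factors and the same-layer pairs, note that for each fixed $j \in \Z_m$ the restriction of $\cT_i$ to the layer $\Z_n \times \{j\}$ is a copy of the one-factor $T_i$, so $\cT_i$ is a perfect matching of $\Z_n \times \Z_m$ and the $\cT_i$ inherit pairwise disjointness from the $T_i$; there are $r$ of them and $r < n-1 \le mn-1$. Fixing a layer index $j$, a same-layer pair $\{(x,j),(y,j)\}$ corresponds to the pair $\{x,y\}$ of $\Z_n$, which by the defining property of the input $(n,r)$-pairs-triples design lies in exactly one triple of $R$ or in exactly one $T_i$; lifting to layer $j$ shows that $\{(x,j),(y,j)\}$ lies in exactly one block of $\cR_1$ or one $\cT_i$. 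Hence every same-layer pair is covered exactly once.

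The main work is the cross-layer count, which I expect to be the crux. Fix a cross-layer pair $\{(x,i),(y,j)\}$ with $i \neq j$. Because $\cS$ is a Steiner triple system, the pair of layer indices $\{i,j\}$ lies in exactly one block $\{i,j,\ell\}$ of $\cS$, so only the $\cR_2$ blocks built from this single $\cS$-triple can contain our pair. Choosing an arbitrary ordering of each $\cS$-block so that $\cR_2$ consists of the blocks $\{(a,i),(b,j),(a+b,\ell)\}$ for $a,b \in \Z_n$, I would then check the three possible positions of our pair among the three layers: if $\{i,j\}$ are the two ``addend'' layers, the pair determines $a,b$ and hence the block uniquely; if the pair uses the ``sum'' layer $\ell$ together with one addend layer, then one addend and the sum are fixed, so the group structure of $\Z_n$ determines the remaining addend uniquely (e.g. from $a$ and $a+b$ one recovers $b$). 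In every case there is exactly one $\cR_2$ block through the pair, and the conclusion is independent of the ordering chosen for each $\cS$-block, since any two coordinates of $(a,b,a+b)$ determine the third. A final optional sanity check is the count: $\cS$ has $m(m-1)/6$ triples, each spawning $n^2$ blocks of $\cR_2$ that cover $3$ cross-layer pairs apiece, for $m(m-1)n^2/2$ cross-layer pairs, matching $\binom{mn}{2} - m\binom{n}{2}$. Collecting the same-layer and cross-layer conclusions establishes that Construction~\ref{const:multiplySTS} yields an $(mn,r)$-pairs-triples design, proving the theorem.
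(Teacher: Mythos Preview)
Your proof is correct and follows essentially the same approach as the paper: both rely on Construction~\ref{const:multiplySTS} and the fact that the Steiner triple system $\cS$ places each pair of layer indices in exactly one block, so that the tripling-style construction applied to each $\cS$-block covers all cross-layer pairs exactly once while the same-layer pairs are handled layer-by-layer by the input design. Your argument is considerably more detailed than the paper's two-sentence sketch, explicitly verifying the one-factor property, disjointness, $r<mn-1$, and the exact cross-layer count, but the underlying idea is identical.
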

\begin{proof}
The triples and the disjoint one-factorizations of Construction~\ref{const:tripling} are constructed on each triple of $\cS$.
Since $\cS$ is a Steiner triple system S$(2,3,m)$ this completes the proof.
\end{proof}

Construction~\ref{const:multiplySTS} can be improved when $m \equiv 3~(\mmod ~ 6)$ and given a specific construction with
a resolvable S$(2,3,m)$, we can replace $n$ parallel classes (S$(1,3,m)$ with $2n$ one-factors. This will be explained in details
in the next version of this draft.

Another generalization of Construction~\ref{const:recursivePT} yields the following theorem.

\begin{theorem}
\label{thm:recursivePTprime}
If there exists an $(n,r)$-pairs-triples design, then there exists an $(pn,(p-1)n+r)$-pairs-triples design, where $p$ is a prime number.
\end{theorem}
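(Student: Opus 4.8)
The plan is to mimic the doubling Construction~\ref{const:recursivePT}, taking the vertex set to be $\Z_n \times \Z_p$ (which has $pn$ points) and splitting the work into an intra-copy part and a cross-copy part. First I would lift the given $(n,r)$-design to each of the $p$ copies: set
\[ \cR \triangleq \{ \{(x,i),(y,i),(z,i)\} ~:~ \{x,y,z\} \in R,~ i \in \Z_p \} \]
and, for each original one-factor $T_\ell$ with $0 \le \ell \le r-1$,
\[ \cT_\ell \triangleq \{ \{(x,j),(y,j)\} ~:~ \{x,y\} \in T_\ell,~ j \in \Z_p \}, \]
each of which is a one-factor of $\Z_n \times \Z_p$ because $n$ is even. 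Since in the original design every pair of $\Z_n$ lies in exactly one of $R$ or the $T_\ell$, these lifted objects cover every intra-copy pair $\{(x,i),(y,i)\}$ exactly once, contributing $p|R|$ triples and $r$ one-factors.

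It remains to cover every cross pair $\{(x,i),(y,j)\}$ with $i \ne j$ exactly once, and these are precisely the edges of the complete $p$-partite graph $K_{n\times p}$ with parts $\Z_n\times\{i\}$. This graph is $(p-1)n$-regular on the even number $pn$ of vertices, so a one-factorization of it consists of exactly $(p-1)n$ one-factors; together with the $r$ lifted factors this gives the required $(p-1)n+r$ one-factors, and comparing with $\binom{pn}{2}$ forces the triple count to be exactly $p|R|$, i.e.\ there are no cross triples. Thus the whole theorem reduces to constructing an explicit one-factorization of $K_{n\times p}$, and this is where the primality of $p$ enters. Viewing $K_{n\times p}$ as the Cayley graph $\mathrm{Cay}(\Z_n\times\Z_p, S)$ with connection set $S=\{(d,e):d\in\Z_n,~e\in\Z_p\setminus\{0\}\}$, I would build the factors from the difference classes $\{(d,e),(-d,-e)\}$: a class whose translation has even order splits evenly into perfect matchings, exactly as the $n$ shift-matchings $\{(x,0),(x+d,1)\}$ do in the $p=2$ case of Construction~\ref{const:recursivePT}. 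Finally I would note that $r'=(p-1)n+r<pn-1$ is equivalent to the hypothesis $r<n-1$, and that $pn$ is even while $r'$ is odd, so the output is a legitimate $(pn,(p-1)n+r)$-pairs-triples design in the sense of Definition~\ref{dfn:nkPT}.

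The main obstacle is the odd-order difference classes that appear precisely because $p$ is odd, most conspicuously the purely vertical differences $(0,e)$: for each fixed $x$ the copies form a single $p$-cycle under the step $e$, so this class is a disjoint union of odd $p$-cycles and admits no one-factorization on its own. I expect to resolve this by never treating such a class in isolation but rather by exploiting the even parameter $n$, whose order-two element $n/2$ satisfies $\lcm(2,p)=2p$. Pairing the fiber over $x$ with the fiber over $x+n/2$ and routing the connecting edges through the slant $(n/2,e)$ merges two odd $p$-cycles into a single even $2p$-cycle, which then one-factorizes into two perfect matchings. Carrying this folding out uniformly over all $e\in\Z_p\setminus\{0\}$ (using that $\Z_p$ is a field, so the nonzero steps act sharply transitively on the copies, generalizing the round-robin schedule) should produce all $(p-1)n$ cross one-factors, after which verifying that each cross pair is hit exactly once is a routine count.
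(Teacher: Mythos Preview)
Your high-level reduction is the same one the paper uses in the special cases it actually proves (Construction~\ref{const:recursivePT} for $p=2$ and Theorem~\ref{thm:recursivetripling} for $p=3$): lift $R$ and the $T_\ell$ verbatim to each of the $p$ copies to handle all intra-copy pairs, and then cover the cross pairs by a one-factorization of the complete $p$-partite graph with parts $\Z_n\times\{i\}$. The paper explicitly defers the general argument to a later version, so there is no full proof to compare against; your framework matches the author's evident intention, and your bookkeeping ($r'=(p-1)n+r$ odd, $r'<pn-1$, triple count $p\abs{R}$) is correct.

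Where your sketch is genuinely incomplete is the explicit Cayley-graph one-factorization. Writing $n=2^a m$ with $m$ odd, the $2$-factor generated by $\{(d,e),(-d,-e)\}$ has cycle length $\lcm\!\bigl(n/\gcd(n,d),\,p\bigr)$, and this is odd precisely when $2^a\mid d$; there are $m$ such bad values of $d$, so for $m>1$ the obstruction is not confined to the vertical differences $(0,e)$. Your $n/2$-shift does send each bad $d$ to a good $d+n/2$, but the phrase ``routing the connecting edges through the slant $(n/2,e)$ merges two odd $p$-cycles into a single even $2p$-cycle'' does not describe a decomposition of the required edge set: the $(n/2,e)$-edges belong to a different difference class and must themselves be covered, so borrowing them to repair the $(0,e)$-class is double counting unless you simultaneously redistribute both classes. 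To close the gap you must either exhibit four explicit one-factors of the $4$-regular Cayley graph on the connection set $\{\pm(d,e),\pm(d+n/2,e)\}$ for every bad $d$, or abandon the difference-class viewpoint and invoke the classical fact that the balanced complete multipartite graph on an even number of vertices is one-factorable. The latter route shows, incidentally, that primality of $p$ is not needed: the same reduction yields the theorem for every integer $p\ge2$.
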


The proof of Theorem~\ref{thm:recursivePTprime} will be given in the next version of this paper.
A special case of Theorem~\ref{thm:recursivePTprime} will be proved now.

\begin{theorem}
\label{thm:recursivetripling}
If there exists an $(n,r)$-pairs-triples design, then there exists an $(3n,2n+r)$-pairs-triples design.
\end{theorem}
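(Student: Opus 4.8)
The plan is to keep the within-copy part of the tripling Construction~\ref{const:tripling} intact and to replace only the transversal triples $\cR_2$ by a one-factorization of the cross-copy pairs. Concretely, I build the design on the point set $\Z_n \times \Z_3$ (three copies of $\Z_n$). For the within-copy pairs I reuse the triples
$$
\cR_1 = \{ \{(x,i),(y,i),(z,i)\} ~:~ \{x,y,z\} \in R, ~ i \in \{0,1,2\} \}
$$
together with the $r$ one-factors
$$
\cT_i = \{ \{(x,j),(y,j)\} ~:~ \{x,y\} \in T_i, ~ j \in \{0,1,2\} \}, \qquad 0 \le i \le r-1,
$$
exactly as in Construction~\ref{const:tripling}. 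Each $\cT_i$ is a perfect matching of $K_{3n}$ because $T_i$ is a one-factor of $K_n$ applied simultaneously on the three copies, and by the hypothesis that $(\cT,\cR)$ is an $(n,r)$-pairs-triples design these triples and one-factors cover every within-copy pair exactly once.

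The remaining pairs are the cross-copy pairs, which form a complete tripartite graph $K_{n,n,n}$ on $\Z_n \times \Z_3$; this graph is $2n$-regular and, since any pairs-triples design forces $n$ to be even (Theorem~\ref{thm:nkPT}), it has even order $3n$. The plan is to supply $2n$ additional one-factors by exhibiting a one-factorization of $K_{n,n,n}$, i.e.\ a partition of its edge set into $2n$ perfect matchings. Adjoining these $2n$ cross one-factors to the $r$ one-factors $\cT_i$ yields $2n+r$ pairwise edge-disjoint one-factors (the $\cT_i$ use only within-copy edges, while the new ones use only cross-copy edges), and together with $\cR_1$ every pair of $\Z_n \times \Z_3$ is covered exactly once. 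Since $2n+r < 3n-1$ is equivalent to the hypothesis $r < n-1$ from Definition~\ref{dfn:nkPT}, the result is a legitimate $(3n, 2n+r)$-pairs-triples design.

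The heart of the argument, and the step I expect to be the main obstacle, is producing the one-factorization of $K_{n,n,n}$. A perfect matching of $K_{n,n,n}$ must use exactly $n/2$ edges between each pair of copies, so $n$ even is essential and each one-factor genuinely mixes the three copies. One route is to invoke that a complete equipartite graph $K_{n,n,n}$ of even order is of Class~$1$, so its chromatic index equals its degree $2n$; a proper edge-colouring with $2n$ colours of a $2n$-regular graph has every colour class a perfect matching, giving the desired $2n$ one-factors. For an explicit construction in the spirit of the paper, I would fix a single starter matching on $\Z_n \times \Z_3$ --- for instance matching the even vertices of copy~$0$ to copy~$1$, the odd vertices of copy~$0$ to copy~$2$, and the odd vertices of copy~$1$ to copy~$2$ --- and translate it by a subgroup $H \le \Z_n^3$ of order $2n$ acting independently on the three copies. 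The verification then reduces to checking that $H$ can be chosen so that, layer by layer, the translates of the starter tile the $n^2$ edges of each $K_{n,n}$ exactly once; keeping track of the parity/coset bookkeeping (which behaves slightly differently for $n \equiv 0$ and $n \equiv 2 \pmod 4$) is where the real work lies. This is the special case $p=3$ of Theorem~\ref{thm:recursivePTprime}, and the same reduction to a one-factorization of $K_{n \times p}$ handles general prime $p$.
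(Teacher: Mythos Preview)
Your proposal is correct and follows essentially the same approach as the paper: both keep the within-copy triples $\cR_1$ and the $r$ lifted one-factors $\cT_i$ intact, and both reduce the remaining task to exhibiting a one-factorization of the cross-copy tripartite graph $K_{n,n,n}$ into $2n$ perfect matchings. The only difference is in execution---the paper writes down explicit formulas for these $2n$ one-factors (splitting $\Z_n$ into two halves and rotating), whereas you appeal to the known fact that $K_{n,n,n}$ with $n$ even is Class~1; either route completes the argument.
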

\begin{proof}
Let $T=\{ T_0 ,T_1,\ldots, T_{r-1} \}$ be a set of pairwise disjoint one-factors and let $R$ be a set of triples
in an $(n,r)$-pairs-triples design on $\Z_n$.
We construct a set of triples and sets of pairs on the points of $\Z_n \times \Z_3$.
The set of triples is defined by
$$
\cR_1 \triangleq \{ \{ (x,i),(y,i),(z,i) \} ~:~ \{ x,y,z \} \in R, ~~ i \in \Z_3 \}~.
$$
The first $\frac{3n}{2}$ sets of pairs are defined by
$$
\cT_{\frac{mn}{2}+i} \triangleq \Big{\{} \{(x,m),(x+i,m+1) \} : 0 \leq x \leq \frac{n}{2}-1 \Big{\}}
$$
$$
\bigcup \Big{\{} \{(x+\frac{n}{2},m+1),(x+i,m+2) \} ~:~ 0 \leq x \leq \frac{n}{2}-1 \Big{\}}
$$
$$
\bigcup  \Big{\{} \{(x+\frac{n}{2},m),(x+\frac{n}{2}+i,m+2) \Big{\}} ~:~ 0 \leq x \leq \frac{n}{2}-1 \Big{\}} ,~~ 0 \leq i \leq \frac{n}{2} -1, ~~ m \in \Z_3~.
$$
The next $\frac{n}{2}$ sets of pairs are defined by
$$
\cT_{\frac{3n}{2}+i} \triangleq \Big{\{} \{(x,m),(x+\frac{n}{2}+i,m+1) \} : 0 \leq x \leq \frac{n}{2}-1 \Big{\}} ,~~ 0 \leq i \leq \frac{n}{2} -1 ~.
$$
The last $r$ sets of pairs are defined by
$$
\cT_{2n+i} \triangleq \{ \{ (x,j),(y,j) \} ~:~ \{ x,y \} \in T_i , ~~ j \in \Z_3 \}, ~~ 0 \leq i \leq r-1 ~.
$$
\end{proof}

Another type of recursive construction based on two pairs-triples designs is as follows.
\begin{construction}
\label{const:recursiveMN}
Let $T^i=\{ T^i_0 ,T^i_1,\ldots, T^i_{r-1} \}$, $i=1,2$, be a set of pairwise disjoint one-factors and $R^i$, $i=1,2$, be a set of triples
in an $(n_i,r_i)$-pairs-triples design, $i=1,2$, on $\Z_n$.
We construct a set of triples and sets of pairs on the points of $\Z_{n_1} \times \Z_{n_2}$.
The set of triples $\cR = \cR_1 \cup \cR_2$ is defined by
$$
\cR_1 \triangleq \{ \{ (x,i),(y,i),(z,i) \} ~:~ \{ x,y,z \} \in R^1, ~~ i \in \Z_{n_2} \}~.
$$
and
$$
\cR_2 \triangleq \{ (x,i),(y,j),(x+y,i+j) ~:~ x,y\in \Z_{n_1}, ~~ \{ i,j,\ell \} \in R^2 \}~.
$$
The first $r_1$ sets of pairs are defined by
$$
\cT_i \triangleq \{ \{ (x,j),(y,j) \} ~:~ \{ x,y \} \in T^1_i , ~~ j \in \Z_{n_2} \}, ~~ 0 \leq i \leq r_1-1 ~.
$$
The next $n_1 r_2$ sets of pairs are defined by
$$
\cT_{r_1 +n_1 \ell +j} \triangleq \{ \{ (x,i),(x+y,j) \} ~:~  x \in \Z_{n_1} , ~ \{ i,j \} \in T^2_\ell \},
y \in \Z_{n_1}, ~ 0 \leq \ell \leq r_2-1 ~.
$$
\end{construction}

Construction~\ref{const:recursiveMN} is used to obtain the following theorem proved similarly to the proofs of the other
theorem associated with the previous recursive constructions.

\begin{theorem}
\label{thm:recursiveMN}
If there exists an $(n_1,r_1)$-pairs-triples design and an $(n_2,r_2)$-pairs-triples design, then there exists
an $(n_1 n_2,r_1 + n_1 r_2)$-pairs-triples design.
\end{theorem}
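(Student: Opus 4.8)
The plan is to verify directly that the triples $\cR = \cR_1 \cup \cR_2$ together with the $r_1 + n_1 r_2$ one-factors produced by Construction~\ref{const:recursiveMN} form an $(n_1 n_2, r_1 + n_1 r_2)$-pairs-triples design on $\Z_{n_1} \times \Z_{n_2}$. The organizing idea is to classify each pair of $\Z_{n_1} \times \Z_{n_2}$ by its second coordinates: call $\{(x,i),(y,j)\}$ a \emph{same-layer} pair when $i = j$ and a \emph{cross-layer} pair when $i \neq j$. The same-layer pairs will be governed entirely by the first input design $(R^1, T^1)$, through $\cR_1$ and the first $r_1$ one-factors, while the cross-layer pairs will be governed by the second input design $(R^2, T^2)$, through $\cR_2$ and the remaining $n_1 r_2$ one-factors. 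Since the two families of pairs are disjoint and are produced by disjoint parts of the construction, the verification splits into two parallel arguments.

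First I would confirm that each $\cT$ is genuinely a one-factor of $\Z_{n_1} \times \Z_{n_2}$ and that the one-factors are pairwise disjoint. For $\cT_i$ with $0 \le i \le r_1 - 1$ this is immediate: $\cT_i$ is $T^1_i$ copied into each of the $n_2$ layers $\Z_{n_1} \times \{j\}$, so it saturates every vertex, and $\cT_i, \cT_{i'}$ share no edge because $T^1_i$ and $T^1_{i'}$ are disjoint. For the one-factors $\cT_{r_1 + n_1 \ell + y}$, indexed by $0 \le \ell \le r_2 - 1$ and a shift $y \in \Z_{n_1}$, I would note that for each layer-pair $\{i,j\} \in T^2_\ell$ the edges $\{(x,i),(x+y,j)\}$ saturate $\Z_{n_1} \times \{i,j\}$ as $x$ runs over $\Z_{n_1}$ (using that $x \mapsto x+y$ is a bijection), and that running over the layer-pairs of $T^2_\ell$, which partition $\Z_{n_2}$, saturates the whole vertex set. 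Two such edges coincide only when they arise from the same layer-pair---hence the same $\ell$, as distinct $T^2_\ell$ are disjoint---and the same shift $y$, which gives pairwise disjointness.

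The heart of the proof is the exact-cover property, treated separately for the two types. A same-layer pair $\{(x,i),(y,i)\}$ corresponds to the pair $\{x,y\}$ of $\Z_{n_1}$, which by Definition~\ref{dfn:nkPT} lies in exactly one triple of $R^1$ or one one-factor $T^1_s$; it is then covered exactly once, by the layer-$i$ copy of that triple in $\cR_1$ or by $\cT_s$, and by nothing else, since neither the $\cR_2$-triples (whose three points lie on three distinct layers) nor the cross-layer one-factors contain a same-layer pair. For a cross-layer pair $\{(x,i),(y,j)\}$, the layer-pair $\{i,j\}$ lies in exactly one triple $\{i,j,\ell\} \in R^2$ or one one-factor $T^2_\ell$. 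In the one-factor case the unique cover is the edge of $\cT_{r_1 + n_1 \ell + (y-x)}$; in the triple case it is the $\cR_2$-triple $\{(x,i),(y,j),(x+y,\ell)\}$.

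The step I expect to demand the most care is the triple case above: one must check that the three layer-pairs $\{i,j\}$, $\{i,\ell\}$, $\{j,\ell\}$ of a triple $\{i,j,\ell\} \in R^2$ each receive every one of their cross-layer point-pairs exactly once from $\cR_2$. This is where the additive structure of $\Z_{n_1}$ is used: a point-pair on layers $\{i,\ell\}$, say $\{(p,i),(s,\ell)\}$, forces the triple parameters through $a = p$ and $a + b = s$, so $b = s - p$ is uniquely determined, and similarly for the other two layer-pairs, each solution being unique because $v \mapsto a + v$ is a bijection of $\Z_{n_1}$. As a confirmation, the $n_1^2$ triples of $\cR_2$ built from a single triple of $R^2$ contribute $3 n_1^2$ pair-incidences, exactly matching the $3 n_1^2$ cross-layer point-pairs on its three layer-pairs, so none is covered twice. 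Finally I would check that the output meets the defining inequality: from $r_1 \le n_1 - 2$ and $r_2 \le n_2 - 2$ one gets $r_1 + n_1 r_2 \le n_1 n_2 - n_1 - 2 < n_1 n_2 - 1$, confirming a legitimate $(n_1 n_2, r_1 + n_1 r_2)$-pairs-triples design.
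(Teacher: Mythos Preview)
Your proposal is correct and follows exactly the approach the paper indicates: the paper does not spell out a proof but states that the theorem is ``proved similarly to the proofs of the other theorems associated with the previous recursive constructions,'' and your layer-by-layer verification of Construction~\ref{const:recursiveMN} is precisely that routine check, carried out in full detail. Your separation into same-layer pairs (handled by $\cR_1$ and the first $r_1$ one-factors, via the $(n_1,r_1)$ design) and cross-layer pairs (handled by $\cR_2$ and the remaining $n_1 r_2$ one-factors, via the $(n_2,r_2)$ design) is the natural decomposition, and your use of the bijectivity of $v\mapsto a+v$ on $\Z_{n_1}$ to establish unique coverage on each layer-pair of a triple of $R^2$ is the key computation the paper leaves implicit.
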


To apply all the recursive constructions we will need more initial condition. We will consider now all the $(n,r)$-pairs-triples designs
for $n=6,~8,~10$, and~12.

For $n=6$, a $(6,1)$-pairs-triples design is covered by Lemma~\ref{lem:simplePT}.

\begin{example}
A $(6,3)$-pairs-triples design on the points of $\Z_6$ is given by the set of triples
$$
\cR \triangleq \{ \{0,1,2\},\{3,4,5\}  \}
$$
and the 3 sets of pairs
$$
\cT_0 \triangleq \{ \{ 0,3 \}, \{1,4 \}, \{ 2,5 \} \},
$$
$$
\cT_1 \triangleq \{ \{ 0,4 \}, \{1,5 \}, \{ 2,3 \} \},
$$
and
$$
\cT_2 \triangleq \{ \{ 0,5 \}, \{1,3 \}, \{ 2,4 \} \}.
$$
\end{example}

For $n=8$, $(8,1)$-pairs-triples design is covered by Lemma~\ref{lem:simplePT}. There are no other $(8,r)$-pairs-triples designs since
by Theorem~\ref{thm:nkPT} we have that $r \equiv 1~(\mmod~6)$, $1 \leq k < 7$.

For $n=10$, a $(10,1)$-pairs-triples design does not exist by Theorem~\ref{thm:nkPT}.
By Theorem~\ref{thm:nkPT} we have that for a $(10,r)$-pairs-triples designs we have
that $r \equiv 3~(\mmod~6)$, $1 \leq r < 9$ and hence we can have
$(10,3)$-pairs-triples design. This set is covered in the following example.

\begin{example}
A $(10,3)$-pairs-triples design on the points of $\Z_{10}$ is given by the set of triples
$$
\cR \triangleq \{ \{i,i+1,i+3\} ~:~ 0 \leq i \leq 9  \}
$$
and the 3 sets of pairs
$$
\cT_0 \triangleq \{ \{ 0,5 \}, \{ 1,7 \}, \{ 2,6 \}, \{ 3,9 \}, \{ 4,8 \} \},
$$
$$
\cT_1 \triangleq \{ \{ 0,6 \}, \{ 1,5 \}, \{ 2,8 \}, \{ 3,7 \}, \{ 4,9 \} \},
$$
and
$$
\cT_2 \triangleq \{ \{ 0,4 \}, \{ 1,6 \}, \{ 2,7 \}, \{ 3,8 \}, \{ 5,9 \} \}.
$$
\end{example}

For $n=12$, $(12,1)$-pairs-triples design is covered by Lemma~\ref{lem:simplePT}.
By Theorem~\ref{thm:nkPT} we have that for $(12,r)$-pairs-triples designs $r$ is odd, $1 \leq r < 11$.
$(12,7)$-pairs-triples design and $(12,9)$-pairs-triples design are obtained from
$(6,1)$-pairs-triples design and $(6,3)$-pairs-triples design, respectively, using Construction~\ref{const:recursivePT}.
A $(12,3)$-pairs-triples design is covered in the following example.

\begin{example}
A $(12,3)$-pairs-triples design on the points of $\Z_{12}$ is given by the set of triples
$$
\cR \triangleq \{ \{i,i+2,i+5\} ~:~ 0 \leq i \leq 11  \} \cup \{ \{i,i+4,i+8\} ~:~ 0 \leq i \leq 3  \}
$$
and the 3 sets of pairs
$$
\cT_0 \triangleq \{ \{ i,i+6 \} ~:~ 0 \leq i \leq 5  \},
$$
$$
\cT_1 \triangleq \{ \{ 2i,2i+1 \} ~:~ 0 \leq i \leq 5  \},
$$
and
$$
\cT_2 \triangleq \{ \{ 2i+1,2i+2 \} ~:~ 0 \leq i \leq 5  \}.
$$
\end{example}

\begin{example}
A $(12,5)$-pairs-triples design on the points of $\Z_{12}$ is given by the set of triples
$$
\cR \triangleq \{ \{i,i+2,i+5\} ~:~ 0 \leq i \leq 11  \}
$$
and the five sets of pairs
$$
\cT_0 \triangleq \{ \{ i,i+6 \} ~:~ 0 \leq i \leq 5  \},
$$
$$
\cT_1 \triangleq \{ \{ 2i,2i+1 \} ~:~ 0 \leq i \leq 5  \},
$$
$$
\cT_2 \triangleq \{ \{ 0,4 \}, \{ 5,9 \}, \{ 6,10 \}, \{ 3,11 \}, \{ 1,2 \}, \{ 7,8 \} \},
$$
$$
\cT_3 \triangleq \{ \{ 0,8 \}, \{ 1,5 \}, \{ 2,6 \}, \{ 7,11 \}, \{ 3,4 \}, \{ 9,10 \} \},
$$
and
$$
\cT_4 \triangleq \{ \{ 1,9 \}, \{ 2,10 \}, \{ 3,7 \}, \{ 4,8 \}, \{ 0,11 \}, \{ 5,6 \} \}.
$$
\end{example}

\section{Conclusion and Future Research}
\label{sec:conclusion}

A comprehensive discussion on the properties, bounds, constructions, and nonexistence results of mixed Steiner
system was given. The draft will be updated with further constructions,
proofs that were omitted, and table of parameters, will be added in the next version.
The direction of research presented in this paper poses many open problems. For example, proving Conjecture~\ref{conj:nkPT},
or finding more mixed Steiner systems MS$(t,k,Q)$, especially when $t > 2$.

The pairs-triples designs were extensively studied, but what about a triples-quadruples design which are
required in Construction~\ref{const:recursive_k_k1} for $k=4$ and $t=3$. Similar questions can be asked about other values of $k$ and $t$.


%


\begin{thebibliography}{99}

\bibitem{BESSSV08}
   {\sc A. D. Blinco, S. I. El-Zanati, G. I. Seelinger, P. A. Sissokho, L. E. Spence, and C. Vanden Eynden,}
   {\sl On Vector space partitions and uniformly resolvable designs,}
   {\em Designs, Codes and Crypto.,} 48 (2008) 69--77.

\bibitem{EJSSS08}
   {\sc S. El-Zanati, H. Jordon, G. Seelinger, P. Sissokho, and L. Spence,}
   {\sl The maximum size of partial 3-spread in a finite vector space over GF(2),}
   {\em Designs, Codes and Crypto.,} 54 (2010) 101--107.

\bibitem{ESSSV07}
   {\sc S. I. El-Zanati, G. F. Seelinger, P. A. Sissokho, L. E. Spence, and C. Vanden Eynden,}
   {\sl Partitions of finite vector spaces Into subspaces,}
   {\em J. of Combin. Designs,} 16 (2007) 329--341.

\bibitem{ESSSV09}
   {\sc S. I. El-Zanati, G. F. Seelinger, P. A. Sissokho, L. E. Spence, and C. Vanden Eynden,}
   {\sl On partitions of finite vector spaces of low dimension over GF(2),}
   {\em Discrete Math.,} 309 (2007) 4727--4735.

\bibitem{Etz92a}
    {\sc T.etzion,}
    {\sl Optimal partitions for triples,}
    {\em J. Combinatorial Theory, Series A,} 59 (1992), 161 -- 176.

\bibitem{Etz92b}
    {\sc T.etzion,}
    {\sl Partitions of triples into optimal packings,}
    {\em J. Combinatorial Theory, Series A,} 59 (1992), 269 -- 284.

\bibitem{Etz97}
    {\sc T. Etzion,}
    {\sl Optimal constant weight codes over $Z_k$ and generalized designs,}
    {\em Discrete \ Mathematics,} 169 (1997), 55 -- 82.

\bibitem{Etz98}
    {\sc T. Etzion,}
    {\sl Perfect byte-correcting codes,}
    {\em IEEE Trans. on Infor. Theory,} 44 (1998), 3140 -- 3146.

\bibitem{Etz22}
     {\sc T. Etzion,}
     {\em Perfect Codes and Related Structures},
     {\sl World Scientific, 2022}.

\bibitem{EtGr93}
   {\sc T. Etzion and G. Greenberg,}
   {\sl Constructions for perfect mixed Codes and other covering codes,}
   {\em IEEE Trans. on Infor. Theory,} 39 (1993) 209--214.

\bibitem{EtHa91}
    {\sc T. Etzion and A. Hartman,}
    {\sl Toward a large set of Steiner quadruple systems,}
    {\em SIAM Journal on Discrete Mathematics,} 4 (1991), 182 -- 195.

\bibitem{Han75}
   {\sc H. Hanani,}
   {\sl Balanced in complete block designs and related designs,}
   {\em Discrete Math.,} 11 (1975) 255--369.

\bibitem{HRW72}
   {\sc H. Hanani, D.K. Ray-Chaudhuri, R.M. Wilson,}
   {\sl On resolvable designs,}
   {\em Discrete Math.,} 3 (1972) 343--357.

\bibitem{HSS99}
     {\sc A. S. Hedayat, N. J. A. Sloane, and J. Stufken,}
     {\em Orthogonal Arrays -- Theory and Applications},
     {\sl Springer, 1999}.

\bibitem{Hed75}
   {\sc O. Heden,}
   {\sl A generalized Lloyd theorem and mixed perfect codes,}
   {\em Math. Scand.,} 37 (1975) 13--26.

\bibitem{Hed77}
   {\sc O. Heden,}
   {\sl A new construction of group and nongroup perfect codes,}
   {\em Infor. and Control,} 34 (1977) 314--323.

\bibitem{Hed09a}
   {\sc O. Heden,}
   {\sl Necessary and sufficient conditions for the existence of a class of partitions of a finite vector space,}
   {\em Designs, codes and Crypto.,} 53 (2009) 69--73.

\bibitem{Hed09b}
   {\sc O. Heden,}
   {\sl On the length of the tail of a vector space partition,}
   {\em Discrete Math.,} 309 (2009) 6169--6180.

\bibitem{HeSc71}
    {\sc M. Herzog and J. Sch\"{o}nheim,}
    {\sl Linear and nonlinear single-error correcting perfect mixed codes,}
    {\em Infor. and Control,} 18 (1971), 364 -- 368.

\bibitem{HeSc72}
    {\sc M. Herzog and J. Sch\"{o}nheim,}
    {\sl Group partition, factorization and the vector covering problem,}
    {\em Canad. Math. Bull.,} 15 (1972), 207 -- 214.

\bibitem{HoPa72}
    {\sc S. J. Hong and A. M. Patel,}
    {\sl A general class of maximal codes for computer applications,}
    {\em IEEE Trans. on Computers,} 21 (1972), 1322 -- 1331.

\bibitem{Ji05}
    {\sc L. Ji,}
    {\sl A new existence proof for large sets of disjoint Steiner triple systems,}
    {\em J. Combinatorial Theory, Series A,} 112 (2005), 308 -- 327.

\bibitem{Ji12}
    {\sc L. Ji,}
    {\sl Existence of Steiner quadruple systems with spanning block design,}
    {\em Discrete Math.,} 312 (2012), 920 -- 932.

\bibitem{Kee18}
       {\sc P. Keevash,}
       {\sl The existence of design II,}
       https://arxiv.org/abs/1802.05900 (2018).

\bibitem{Kha09}
    {\sc A. Khare,}
    {\sl Vector spaces as unions of proper subspaces,}
    {\em Linear Algebra and its Applications,} 431 (2009), 1681 -- 1686.

\bibitem{Lin75}
    {\sc B. Lindstr\"{o}m,}
    {\sl Group partitions and mixed perfect codes,}
    {\em Canad. Math. Bull.,} 18 (1975), 57 -- 60.

\bibitem{Lu83}
    {\sc J. X. Lu,}
    {\sl On large sets of disjoint Steiner triple systems, I -- III,}
    {\em J. Combinatorial Theory, Series A,} 34 (1983), 140 -- 146, 147 -- 155, 156 -- 182.
\bibitem{Lu84}
    {\sc J. X. Lu,}
    {\sl On large sets of disjoint Steiner triple systems, IV -- VI,}
    {\em J. Combinatorial Theory, Series A,} 37 (1984), 136 -- 163, 164 -- 188, 189 -- 192.

\bibitem{PSS06}
   {\sc S. Perkins, A. L. Sakhnovich, and D. H. Smith,}
   {\sl On an upper bound for mixed error-correcting codes,}
   {\em IEEE Trans. on Infor. Theory,} 52 (1993) 708--712.

\bibitem{RaWi71}
   {\sc D. K. Ray-Chaudhuri and R. M. Wilson,}
   {\sl Solution of Kirkman's schoolgirl problem,} in Combinatorics
   {\em Proc. Sympo. Pure Math.,} pp. 187--203 (Gordon and Breach, New York)
   vol. XIX (Univ. California, Los Angeles, Calif., 1968; Amer. Math. Soc., Providence, RI, 1971).

\bibitem{Sch70}
   {\sc J. Sch\"{o}nheim,}.
   {\sl Mixed codes,}
   {\em Proc. Calgary Internat. Conf. on Combinatorial Structures and their Applications,} pp. 385 (Gordon and Breach, New York) 1970.

\bibitem{SSSV12a}
   {\sc G. Seelinger, P. Sissokho, L. Spence, and C. Vanden Eynden,}
   {\sl Partitions of finite vector spaces over GF(2)into subspaces of dimensions 2 and $s$,}
   {\em Finite Fields and Their Applications,} 18 (2012) 1114--1132.

\bibitem{SSSV12b}
   {\sc G. Seelinger, P. Sissokho, L. Spence, and C. Vanden Eynden,}
   {\sl Partitions of $V(n, q)$ into 2- and $s$-dimensional subspaces,}
   {\em J. of Combin. Designs,} 20 (2012) 467--482.

\bibitem{Tei91}
    {\sc L. Teirlinck,}
    {\sl A completion of Lu's determination of the spectrum for large sets of disjoint Steiner triple systems,}
    {\em J. Combinatorial Theory, Series A,} 57 (1991), 302 -- 305.

\bibitem{Wal97}
    {\sc W. D. Wallis,}
    {\em One-factorizations},
    Kluwer Academic Publisher, 1997.

\bibitem{Wee91}
    {\sc G. J. M. van Wee,}
    {\sl On the non-existence of certain perfect mixed codes,}
    {\em Discrete Math.,} 87 (1991), 323 -- 326.

%

\end{thebibliography}
\end{document}